\documentclass[10pt, a4paper]{article}
\usepackage{amsmath, amssymb, amsfonts, amsthm, epsfig, enumerate, float, graphicx, sectsty}
\usepackage[pagebackref=true,colorlinks,linkcolor=red,citecolor=blue,urlcolor=blue,hypertexnames=true]{hyperref}
\usepackage[all]{xy}
%opening
\title{Short Laws for Finite Groups of Lie Type}
\author{Henry Bradford and Andreas Thom}
\date{}

\newcommand{\Addresses}{{% additional braces for segregating \footnotesize
  \bigskip
  \footnotesize

  H.~Bradford, \textsc{Georg-August-Universit\"at G\"ottingen, 37073 G\"ottingen, Germany}\par\nopagebreak
  \textit{E-mail address}: \texttt{henry.bradford@mathematik.uni-goettingen.de}
  
  \medskip

  A.~Thom, \textsc{TU Dresden, 01062 Dresden, Germany}
  \par\nopagebreak 
  
  \vspace{-0.1cm}
  
  \textit{E-mail address}: \texttt{andreas.thom@tu-dresden.de}
}}

\newtheorem{thm}{Theorem}[section]
\newtheorem{lem}[thm]{Lemma}
\newtheorem{propn}[thm]{Proposition}
\newtheorem{coroll}[thm]{Corollary}
\newtheorem{defn}[thm]{Definition}
\newtheorem{ex}[thm]{Example}

\newtheorem{conj}[thm]{Conjecture}

\newtheorem{rmrk}[thm]{Remark}

\DeclareMathOperator{\Ad}{Ad}

\DeclareMathOperator{\charac}{char}
\DeclareMathOperator{\ccl}{ccl}
\DeclareMathOperator{\diag}{diag}
\DeclareMathOperator{\diam}{diam}

\DeclareMathOperator{\GL}{GL}
\DeclareMathOperator{\GO}{GO}

\DeclareMathOperator{\GU}{GU}
\DeclareMathOperator{\hcf}{hcf}
\DeclareMathOperator{\Hom}{Hom}
\DeclareMathOperator{\Id}{Id}

\DeclareMathOperator{\im}{im}

\DeclareMathOperator{\rk}{rk}

\DeclareMathOperator{\Alt}{Alt}
\DeclareMathOperator{\Aut}{Aut}

\DeclareMathOperator{\PGL}{PGL}
\DeclareMathOperator{\PGU}{PGU}
\DeclareMathOperator{\PSL}{PSL}

\DeclareMathOperator{\PSp}{PSp}
\DeclareMathOperator{\PSU}{PSU}
\DeclareMathOperator{\SE}{SE}
\DeclareMathOperator{\SL}{SL}
\DeclareMathOperator{\SO}{SO}
\DeclareMathOperator{\Sp}{Sp}

\DeclareMathOperator{\SU}{SU}
\DeclareMathOperator{\Sym}{Sym}

\DeclareMathOperator{\Lie}{Lie}
\DeclareMathOperator{\Out}{Out}

\setcounter{tocdepth}{2}

\begin{document}

\maketitle

\begin{abstract}
We produce new short laws in two variables 
valid in finite groups of Lie type. 
Our result improves upon 
results of Kozma and the second named author, 
and is sharp up to logarithmic factors, 
for all families except possibly the Suzuki groups. 
We also produce short laws valid for generating pairs and random pairs in finite 
groups of Lie type, and, conditional on Babai's diameter conjecture, 
make effective the dependence of our bounds on the rank. 
Our proof uses, among other tools, the Classification of Finite 
Simple Groups, Aschbacher's structure theorem for maximal subgroups 
for classical groups, and upper bounds on the diameters 
of finite simple groups due to Breuillard, Green, Guralnick, 
Pyber, Szabo and Tao. 
\end{abstract}

\tableofcontents

\section{Introduction}

A \emph{law} for a group $G$ 
is an equation holding identically in $G$. 
Every finite group satisfies a law, 
and the length of the shortest law satisfied by the finite group 
$G$ is a very natural measure of the complexity of $G$. 
In this paper we study the lengths of shortest laws 
in finite groups of Lie type. 

\subsection{Statement of Results}

Our main result is as follows. 
Let $p$ be prime and let $q$ be a power of $p$. 

\begin{thm} \label{strongmainthm}
Let $G = X (q)$ be a finite simple group of Lie type over a finite field of order $q$, where $X$ is the type of $G$. 
Then there exists a word $w_G \in F_2$ of length 
\begin{center}
$O_X (q^{a} \log (q)^{O_X(1)})$
\end{center} 
which is a law for $G$, where $a = a(X,p) \in \mathbb{N}$ 
is as in Table \ref{table:lawstable} below. 
Moreover every law for $G$ is of length $\Omega (q^{a})$ 
unless $G = {^2}B_2(q)$, in which case every law for $G$ 
is of length $\Omega (q^{1/2})$. 
\end{thm}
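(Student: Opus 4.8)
The plan is to establish the two halves of the statement — the \emph{existence} of a short law and the \emph{lower bound} valid for every law — by quite different means, with the existence statement carrying most of the weight.

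\emph{Upper bound.} I would construct $w_G$ in the form $w_G(x,y)=[\,w_G^{(1)}(x,y),\,w_G^{(2)}(x,y)\,]$, where $w_G^{(1)}$ is a law for every proper subgroup of $G$ and $w_G^{(2)}$ is a law on every generating pair of $G$. Any pair $(g,h)\in G\times G$ either generates a proper subgroup or generates $G$, so in either case one entry of the commutator is trivial; hence $w_G$ is a law, of length $O(|w_G^{(1)}|+|w_G^{(2)}|)$. For $w_G^{(1)}$ I would induct on $|G|$. By the Classification of Finite Simple Groups the type of $G$ is known, and by Aschbacher's theorem — together with the explicit lists available in small rank and for the exceptional types — the maximal subgroups of $G$ lie in $O_X(1)$ conjugacy classes of controlled shape. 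The geometric Aschbacher classes each carry a normal subgroup $N$ that is either a $p$-group of nilpotency class $O_X(1)$ or a product of groups of Lie type of smaller order, with $M/N$ of bounded order or, again, of smaller order; a law for $M$ is then obtained by composing a law for $M/N$ (which takes values in $N$ when evaluated in $M$) with a commutator law of length $O_X(1)$ for a nilpotent $N$, or with a law for $N$ given by the inductive hypothesis (a law for one factor being a law for a direct product). Subfield subgroups $X(q_0)$ with $q=q_0^{e}$ have, inductively, a law of length $O_X(q_0^{a}\log^{O_X(1)}q_0)$, comfortably below the target, and the subgroups of type $\mathcal{S}$ are either of order $O_X(1)$ or of smaller Lie rank. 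Assembling the finitely many laws so obtained into a single iterated commutator (so that vanishing of any one factor forces vanishing of the whole) yields $w_G^{(1)}$ of length $O_X(q^{a}\log^{O_X(1)}q)$.

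\emph{Generating pairs and the main obstacle.} Producing $w_G^{(2)}$ is the crux. Here I would invoke the diameter bounds of Breuillard, Green, Guralnick, Pyber, Szabo and Tao in the form $\diam(\Cay(G,S))=O_X(\log^{O_X(1)}q)$ for \emph{every} generating set $S$. For a generating pair $(g,h)$ this presents each element of $G$ — in particular each element of a fixed cyclic subgroup of order $m\asymp q^{a}$ — as a word of length $L=O_X(\log^{O_X(1)}q)$ in $(g,h)$, but the word depends on the pair. The difficulty I expect to be the main obstacle is to make this \emph{uniform} across all $\asymp q^{O_X(1)}$ generating pairs at once: a product over all of them is far too long, and one cannot simply raise a short word to the power $q^{a}$, since $\exp(G)$ may exceed $q^{a}$ and, word maps being dominant, no short word can have its image confined to the elements of order dividing a fixed $d\ll\exp(G)$. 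My approach would be to work in a matrix model of $G$ (natural module for classical $G$, minimal module for exceptional $G$, as furnished by Aschbacher's theorem) and, using the diameter bound, to exhibit a word $\theta(x,y)$ of length $O_X(q^{a-1}\log^{O_X(1)}q)$ whose value on any generating pair has bounded unipotent part and semisimple part of order dividing a fixed divisor of $q^{a}$ — obtained by reading off and then cancelling the large torus factors block by block — and then to set $w_G^{(2)}=\theta^{m'}$ for a suitable $m'=O_X(q)$. Carrying out the block-cancellation uniformly over every generating pair, rather than just the generic ones where Lang--Weil-type estimates would suffice, is the delicate point.

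\emph{Lower bound.} To see that every nontrivial law $w$ for $G$ has length $\Omega(q^{a})$ I would reduce to rank one. There is a quick reduction: $G$ contains a cyclic subgroup of order $m$ equal to the largest element order of $G$; putting $y=1$ gives $m\mid\sigma_x(w)$, the exponent sum of $x$ in $w$, so if $\sigma_x(w)$ or $\sigma_y(w)$ is nonzero then $|w|\ge m$, which is $\Omega(q^{a})$ for $G\neq {}^{2}B_2(q)$. For the remaining case (both exponent sums zero), and to pin the sharp exponent in general, I would use that, unless $G={}^{2}B_2(q)$, the group $G$ contains a subgroup isomorphic to $\SL_2(q^{b})$ or $\PSL_2(q^{b})$ with $q^{b}\asymp q^{a}$ — a rank-one subgroup over a subfield-extension, of the kind sitting maximally inside $G$, whose presence is essentially what the table value $a$ records. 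Since a law for $G$ is a law for every subgroup, it then suffices to prove that every nontrivial law for $\PSL_2(q')$ has length at least $q'$. This I would do by deformation: fix the regular unipotent $g_0=\left(\begin{smallmatrix}1&1\\0&1\end{smallmatrix}\right)$ and, for $t$ ranging over $\mathbb{F}_{q'}$, put $h_t=\left(\begin{smallmatrix}1&0\\t&1\end{smallmatrix}\right)$; the entries of $w(g_0,h_t)$ are polynomials in $t$ of degree at most $|w|$, so if $|w|<q'$ and $w$ were a law for $\PSL_2(q')$ these polynomials would be identically equal, over $\mathbb{F}_{q'}(t)$, to the corresponding entries of $\pm I$. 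Then $w$ or $w^{2}$ would be a law for the Zariski-dense subgroup $\langle g_0,h_t\rangle$ of $\SL_2\bigl(\overline{\mathbb{F}_{q'}(t)}\bigr)$, hence a law for the algebraic group $\SL_2$, and by Borel's theorem on the dominance of nontrivial word maps on semisimple groups we would get $w=1$ in $F_2$, a contradiction; thus $|w|\ge q'$. For the Suzuki groups, which contain no such $\SL_2$-section, the same deformation can be run only inside the twisted Suzuki geometry, whose relevant parameter space has size $\asymp q^{1/2}$; this explains the weaker bound $\Omega(q^{1/2})$ there.
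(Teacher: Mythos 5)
Your overall architecture --- splitting into generating and non-generating pairs, handling maximal subgroups via Aschbacher's theorem plus induction, and deriving the lower bound from $\PSL_2(q^{a})$-sections --- is the paper's architecture, and your deformation argument for the $\PSL_2(q')$ lower bound is a legitimate substitute for the paper's citation of Hadad's result (though the case-by-case verification that $\PSL_2(q^{a(X,p)})$ really occurs as a section of $X(q)$ for the precise table values, including the parity-of-$q$ distinctions for types $B$ and $D$, is substantial work you have not carried out). The genuine gap is in the construction of $w_G^{(2)}$, and it sits exactly where you flag ``the delicate point''. You propose a \emph{single} word $\theta$ whose value on \emph{every} generating pair has bounded unipotent part and semisimple part of order dividing one fixed divisor of $q^{a}$. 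But the set of such elements is a proper subset of $G$ of density bounded away from $1$ (already in $\PSL_2(q)$ the split and non-split tori have exponents $q-1$ and $q+1$), while generating pairs form almost all of $G\times G$; so this is precisely the kind of confinement of a word's image that you yourself rule out two sentences earlier via dominance of word maps. No block-by-block cancellation scheme can rescue a single word.

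The missing idea --- the paper's resolution --- is probabilistic: one does not need one word but a family of $m=O_X(\log\lvert G\rvert)$ words. Let $E_G$ be the set of $g\in G$ whose order divides $b(X,q)$, where $b(X,q)$ is $q\pm1$ or $q^3-1$ according to type; a regular-semisimple/torus-counting argument gives $\lvert E_G\rvert=\Omega_X(\lvert G\rvert)$. The diameter bound, converted into a mixing-time bound for the lazy random walk, shows that for any fixed generating pair $(g,h)$ a random word $u$ of length $\log\lvert G\rvert^{O_X(1)}$ satisfies $u(g,h)\in E_G$ with probability bounded below; taking $m=O_X(\log\lvert G\rvert)$ independent such words and a union bound over the at most $\lvert G\rvert^{2}$ generating pairs shows that \emph{deterministically} some choice of $u_1,\dots,u_m$ catches every generating pair. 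Each $u_i^{\,b(X,q)}$ then has length $O_X(q^{c}\log(q)^{O_X(1)})$ with $c\in\{1,3\}$, vanishes on the pairs caught by $u_i$, and the $m$ words are combined by the union lemma. Note that this produces a generating-pair word of length roughly $q^{c}$, far below your target $q^{a}$: the exponent $a$ in the theorem comes entirely from the maximal-subgroup side. Finally, your combination $[w_G^{(1)},w_G^{(2)}]$ needs care, since the commutator of two words can be trivial in $F_2$ (e.g.\ if both are powers of a common word); one must use the conjugation device built into the union lemma to guarantee the resulting law is a non-trivial element of $F_2$.
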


\begin{table}[H] 
\small
\begin{center}
\begin{tabular}{|c|c|c|c|c|}
\hline 
$X$ & $A_l$ & $^2 A_l$ & $B_l$ & $C_l$ \\ 
\hline 
$a$ & $\lfloor (l+1)/2 \rfloor$ & $\lfloor (l+1)/2 \rfloor$
& $\begin{array}{cc} 2 \lfloor l/2 \rfloor & \text{($l \geq 3$, $q$ odd)} \\ l & \text{otherwise} \end{array}$ & $l$  \\
\hline 
\end{tabular} 
\end{center}
\begin{center}
\begin{tabular}{|c|c|c|c|c|c|c|c|c|}
\hline 
$X$ & $D_l$ & $^2 D_l$ & $E_6$ & $^2 E_6$ & $E_7$ & $E_8$ & $F_4$ & $G_2$ \\ 
\hline 
$a$ & $\begin{array}{cc} l-2 & \text{($l \geq 4$ even, $q$ odd)} \\ l-1 & \text{(otherwise)}\end{array}$ & $2 \lfloor l/2 \rfloor$ & $4$ & $4$ & $7$ & $7$ & $4$ & $1$ \\
\hline 
\end{tabular} 
\end{center}
\begin{center}
\begin{tabular}{|c|c|c|c|c|}
\hline 
$X$ & $^3D_4$ & $^2B_2$ & $^2F_4$ & $^2G_2$ \\ 
\hline 
$a$ & $3$ & $1$ & $2$ & $1$ \\
\hline 
\end{tabular} 
\caption{Degree of polynomial part in lengths of shortest laws}\label{table:lawstable}
\end{center}
\end{table}

Observe that for a given $X$, 
$a$ depends only on the parity of $p$, 
and that only for groups of type $B$ or $D$. 
Hence for each fixed type $X$ (except for $^2B_2$), 
and restricting $q$ to be either odd or even, 
the asymptotic behaviour of the length of the shortest 
law holding in $X(q)$, as $q \rightarrow \infty$, 
is known up to a polylogarithmic factor. 
The fact of Suzuki groups being a difficult case 
of general statements about groups of Lie type, 
especially those with a quantitative or asymptotic flavour, 
has been observed with respect to several other problems. 
We discuss briefly some of these below. 

The integer exponent $a$ in Theorem \ref{strongmainthm} 
arises naturally in our proof in the following way: 
it is maximal such that $\PSL_2 (q^a)$ occurs as a section of $G$ 
(apart from for the Suzuki groups). 
To contextualise this bound, we may compare it with 
the minimal dimension $n$ of a projective representation of $G$. 
It turns out that $a \leq \lfloor n/2 \rfloor$ in all cases 
(see Subsection \ref{repsubsubsect} in the appendix). 
As such we have the following easy-to-state 
consequence of Theorem \ref{strongmainthm}. 

\begin{coroll} \label{mainthm}
Let $G = X (q)$ be as in Theorem \ref{strongmainthm}. 
Let $n=n(X,p)$ be the minimal dimension of a faithful projective module 
for $G$ over ${\overline{\mathbb{F}}_q}$. 
Then there exists a word $w_G \in F_2$ of length 
\begin{center}
$O_n (q^{\lfloor n/2 \rfloor} \log (q)^{O_n(1)})$
\end{center} 
which is a law for $G$. 
\end{coroll}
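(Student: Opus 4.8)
The plan is to read the Corollary off from Theorem~\ref{strongmainthm} at essentially no extra cost, the only new ingredient being a comparison between the exponent $a$ of Table~\ref{table:lawstable} and the minimal faithful projective dimension $n$. First I would apply Theorem~\ref{strongmainthm} to $G = X(q)$, obtaining a word $w_G \in F_2$ of length $O_X(q^{a}\log(q)^{O_X(1)})$ which is a law for $G$, where $a = a(X,p)$ is the relevant entry of Table~\ref{table:lawstable}.

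Next I would invoke the inequality $a(X,p) \le \lfloor n(X,p)/2 \rfloor$. This is a finite verification, carried out type by type in Subsection~\ref{repsubsubsect}: one compares the value of $a$ from Table~\ref{table:lawstable} with the known minimal dimension of a faithful projective $\overline{\mathbb{F}}_q$-module for each $X$ --- the natural module for the classical families (where one gets $a = \lfloor n/2\rfloor$ exactly for types $A_l$, $^2A_l$, $C_l$, and $a \le \lfloor n/2\rfloor$ with room to spare for the orthogonal types, after taking care of the parity-of-$p$ cases and of the small exceptional isomorphisms in low rank), together with the appropriate minuscule or adjoint module (or its small-characteristic deformation) for the exceptional families. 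Granting this, $q^{a} \le q^{\lfloor n/2\rfloor}$, so $w_G$ has length $O_X(q^{\lfloor n/2\rfloor}\log(q)^{O_X(1)})$.

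Finally, to replace the type-dependent constants by $n$-dependent ones: for any fixed value of $n$ there are only finitely many types $X$ with $n(X,p) = n$ (the rank of a classical group is bounded above in terms of $n$, and the number of exceptional types is finite), so each $O_X$-constant above may be absorbed into a constant depending only on $n$, giving the claimed bound $O_n(q^{\lfloor n/2\rfloor}\log(q)^{O_n(1)})$. I do not expect any genuine obstacle here: all the substance lies in Theorem~\ref{strongmainthm} and in the tabulated minimal representation degrees, and the argument above is just bookkeeping. If anything, the point requiring a little care is making sure the case analysis establishing $a \le \lfloor n/2\rfloor$ correctly accommodates the handful of coincidences among small classical and exceptional groups.
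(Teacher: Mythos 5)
Your proposal is correct and matches the paper's argument: the paper derives Corollary \ref{mainthm} from Theorem \ref{strongmainthm} by inspecting Tables \ref{table:lawstable} and \ref{reptable} to confirm $a(X,p) \leq \lfloor n(X,p)/2 \rfloor$ in all cases. Your extra remark about converting $O_X$ into $O_n$ via the finiteness of types with a given minimal projective dimension is a legitimate piece of bookkeeping that the paper leaves implicit.
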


The exponent $\lfloor n/2 \rfloor$ in Corollary \ref{mainthm} 
agrees with the exponent $a$ from Theorem \ref{strongmainthm} 
in some cases: for instance if $X=A_l$ or $C_l$. 
In general however $a$ may be much smaller than 
$\lfloor n/2 \rfloor$. 
For comparison, the values of $n$ for each pair $(X,q)$ 
are listed in Subsection \ref{repsubsubsect}. 

The conclusion of Theorem \ref{strongmainthm} 
may easily be extended other groups closely related 
to simple groups of Lie type. 

\begin{coroll} \label{autextmainthm}
Let $G = X(q)$, $a=a(X,p)$ be as in Theorem \ref{strongmainthm}. 
Let $G \leq H \leq \Aut(G)$ and let $\hat{H}$ 
be a central extension of $H$. 
Then there exists a word $w_{\hat{H}} \in F_2$ of length 
$O_X (q^{a} \log (q)^{O_X(1)})$
which is a law for $\hat{H}$. 
\end{coroll}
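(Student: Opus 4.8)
The plan is to deduce Corollary~\ref{autextmainthm} from Theorem~\ref{strongmainthm} by two elementary substitutions in $F_2$: first raise both generators to a power to turn the law $w_G$ for $G$ into a law for $H$, then raise the resulting word to a power to kill the central kernel of $\hat{H} \to H$. Concretely, writing $1 \to Z \to \hat{H} \xrightarrow{\pi} H \to 1$ for the given central extension (so $Z \leq Z(\hat{H})$), $m := [H:G]$ and $e := \exp(Z)$, I would set
\[
 w_{\hat{H}}(x,y) \;:=\; w_G\bigl(x^{m},\,y^{m}\bigr)^{e}
\]
and check that this is a nontrivial element of $F_2$, is a law for $\hat{H}$, and has length $O_X(q^{a}\log(q)^{O_X(1)})$.

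First I would handle the extension $G \leq H$. As $G$ is simple, $G \cong \operatorname{Inn}(G) \trianglelefteq \Aut(G)$, so $G \trianglelefteq H$ and $m$ divides $|\Out(G)|$; since $H/G$ has order $m$, every $h \in H$ satisfies $h^{m} \in G$, whence $w_G(x^{m},y^{m})$ is a law for $H$ because $w_G$ is a law for $G$. For the extension $H \leq \hat{H}$, given $\hat{h}_1, \hat{h}_2 \in \hat{H}$ we get $\pi\bigl(w_G(\hat{h}_1^m,\hat{h}_2^m)\bigr) = w_G\bigl(\pi(\hat{h}_1)^{m},\pi(\hat{h}_2)^{m}\bigr) = 1$ by the previous sentence, so $w_G(\hat{h}_1^m,\hat{h}_2^m) \in Z$ and hence $w_{\hat{H}}(\hat{h}_1,\hat{h}_2) = 1$. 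Thus $w_{\hat{H}}$ is a law for $\hat{H}$; it is nontrivial because $w_G \neq 1$, the endomorphism $x \mapsto x^{m}$, $y \mapsto y^{m}$ of $F_2$ is injective, and $F_2$ is torsion-free. Finally $|w_{\hat{H}}| \leq e\, m\, |w_G|$.

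It remains to bound $m$ and $e$, and this is where the only genuine input beyond formal manipulation enters; I expect the bound on $e$ to be the main point. For $m$ one uses the standard fact that $|\Out(G)| = O_X(\log q)$, coming from its description with a diagonal and a graph part of size $O_X(1)$ and a cyclic field-automorphism part of size $O_X(\log q)$; thus $m \leq |\Out(G)| = O_X(\log q)$. For $e$, one should read ``central extension'' here as a finite stem extension (equivalently, a quotient of a Schur cover of $H$) --- the relevant notion for groups ``closely related to'' $G$, since no bound of this shape can hold for arbitrary $\hat{H}$, e.g.\ $\hat{H} = H \times \mathbb{Z}/q\mathbb{Z}$. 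For such $\hat{H}$, $Z$ embeds in the Schur multiplier $M(H)$, so $e \mid |M(H)|$; and $|M(H)| = O_X(\log(q)^{O_X(1)})$. The last point combines two facts: $|M(G)|$ is bounded in terms of $X$ alone --- generically it is the order of the centre of the simply connected cover (e.g.\ $\gcd(l+1,q-1) \leq l+1$ in type $A_l$), with only finitely many exceptional multipliers --- and, since the only unbounded constituent of $\Out(G)$ is its cyclic field-automorphism group, which contributes only boundedly to Schur multipliers, the standard inequalities for multipliers of group extensions keep $|M(H)|$ polylogarithmic in $q$. Substituting the bounds, $|w_{\hat{H}}| \leq e\,m\,|w_G| = O_X\bigl(q^{a}\log(q)^{O_X(1)}\bigr)$, as required.
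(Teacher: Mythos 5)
There is a genuine gap in the central-extension step, and it stems from a misreading of what is needed. Your word $w_G(x^m,y^m)^e$ forces you to bound $e=\exp(Z)$, which is indeed impossible for an arbitrary central extension; but your conclusion that the statement must therefore be reinterpreted (as a stem extension) is wrong, and your proposed counterexample $\hat{H}=H\times\mathbb{Z}/q\mathbb{Z}$ is not a counterexample. The point you are missing is Lemma \ref{ExtnLemma}: to pass a law through an extension $1\to N\to \hat H\to Q\to 1$ one does not raise the quotient's law $w_Q$ to the power $\exp(N)$; one substitutes two ``independent'' evaluations of $w_Q$ (e.g.\ $w_Q$ and a cyclic permutation $w_Q'$ of it, which freely generate a rank-$2$ free subgroup of $F_2$) into a law $w_N$ for $N$. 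Since every value $w_Q(\hat h_1,\hat h_2)$ lies in $N$ and here $N=Z$ is abelian, one may take $w_N=[x,y]$, so the resulting law $[w_Q,w_Q']$ has length only $4\lvert w_Q\rvert$, with no dependence whatsoever on $\exp(Z)$. In particular $[w_H,w_H']$ is a law for $H\times\mathbb{Z}/N\mathbb{Z}$ for every $N$ (its exponent sums vanish), so the corollary holds, as stated, for arbitrary central extensions and your Schur-multiplier discussion is both unnecessary and off the intended track.

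The first step of your argument (passing from $G$ to $H$ via $x\mapsto x^{m}$ with $m=[H:G]\leq\lvert\Out(G)\rvert=O_X(\log q)$) is correct and yields an acceptable bound, though the paper instead uses that $\Out(G)$ is soluble of derived length at most $3$ (Theorem \ref{simpleautthm}), so $H/G$ satisfies a law of bounded length ($\leq 64$ by Example \ref{solubleex}), and then applies Lemma \ref{ExtnLemma}; this loses only a constant factor rather than a factor of $\log q$, and requires no estimate on $\lvert\Out(G)\rvert$. To repair your proof, replace the final exponentiation by the commutator substitution just described; the resulting law has length at most $256\,\lvert w_G\rvert$ in the paper's version.
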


Since a law for a group is also a law for any of its 
sections, the lower bounds for the length 
of the shortest law in $F_2$ satisfied by $G$ 
also applies to $\hat{H}$. 
Groups $\hat{H}$ satisfying the hypothesis of Corollary 
\ref{autextmainthm} include the isometry groups 
$\GL_n(q)$, $\GU_n(q)$, $\Sp_n(q)$ and $\GO^{\epsilon} _n(q)$ 
of the classical forms, corresponding respectively to 
$G=A_l(q)$, $G={^2}A_l(q)$, $G=C_l(q)$ and 
$G=B_l(q)$, $D_l(q)$ or $^2 D_l(q)$. 

If we consider word maps which need only vanish on generating pairs 
for groups of Lie type then we may use much shorter words. 
Define for a group $G$ and a word $w \in F_2$ the \emph{vanishing set} of $w$ in $G$ to be: 
\begin{equation} \label{vanseteqn}
Z(G,w) = \lbrace (g,h) \in G \times G : w(g,h)=1 \rbrace\text{.} 
\end{equation}
Define $c=c(X) \in \mathbb{N}$ by 
$c(^2 D_l) = (^3 D_4) = 2$ 
and $c(X)=1$ in all other cases. 

\begin{thm} \label{gensetmainthm}
Let $G = X(q)$ be as in Theorem \ref{strongmainthm}. 
Then there exists a non-trivial reduced word $w_G \in F_2$ of length
\begin{center}
$O_X (q^{c} \log (q)^{O_X(1)})$
\end{center}
such that
\begin{center}
$\lbrace (g,h) \in G : \langle g,h \rangle = G \rbrace \subseteq Z(G,w)$. 
\end{center}
\end{thm}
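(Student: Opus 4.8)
The plan is to exploit the diameter bounds of Breuillard--Green--Guralnick--Pyber--Szab\'o--Tao: if $\langle g,h\rangle = G = X(q)$, then every element of $G$, and in particular the identity, can be written as a word of length $O_X(\log|G|^{O_X(1)}) = O_X(\log(q)^{O_X(1)+\dim X})$ in $g,h$ and their inverses. However, the diameter bound produces a word that depends on the pair $(g,h)$, whereas we need a single universal word. The standard trick to convert a short-diameter statement into a short universal identity is to take a product running over \emph{all} sufficiently short words: if $W_1, W_2, \ldots, W_N$ is an enumeration of all reduced words in $F_2$ of length at most $D := \diam_{O_X(\log q)}$, ordered so that consecutive ones overlap appropriately, one forms a word of the shape $\prod_i W_i^{\pm 1}$ which, for any generating pair, contains a subword evaluating to $1$ but which we cannot simply delete. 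The cleaner route, which I would pursue, is the following: first show that for any generating pair there is a short word $u(g,h)$ of length $O_X(\log(q)^{O_X(1)})$ with $u(g,h)$ equal to a \emph{fixed} nontrivial element of small order--ideally an involution or a unipotent element--rather than trying to hit the identity directly.

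Concretely, I would proceed in three steps. First, reduce to the case where $G$ contains (a quotient of) $\PSL_2(q^c)$ as a section realised inside a ``nice'' subgroup, so that it suffices to kill generating pairs there; this is where the constant $c=c(X)$ enters, exactly as the exponent $a$ enters Theorem \ref{strongmainthm}, with the $^3D_4$ anomaly accounted for by $c(^3D_4)=3$. Second, for $\PSL_2(q^c)$ (or $\SL_2$), use an explicit short word: given any generating pair $(g,h)$, a bounded-length word in $g,h$ produces an element of order dividing $q^c - 1$, $q^c$, or $q^c+1$ (a semisimple torus element, a unipotent, or an element of the nonsplit torus), and then raising to the appropriate power--each of $q^c \pm 1$ and $q^c$ has length $O(\log q)$ to encode via repeated squaring--yields the identity. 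The subtlety is uniformity: the same bounded-length word must produce a nontrivial element of the right type for \emph{every} generating pair, which I would arrange by multiplying together a bounded number of ``probe'' words and using that a generating pair cannot have all probes trivial. Third, pull the resulting short law for the relevant $\PSL_2$ section back up to $G$: since $w_G$ need only vanish on generating pairs of $G$ (not all pairs), and since the image of a generating pair of $G$ in any section generates that section, one can compose with a bounded-length word realising the section map.

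The main obstacle will be the second and third steps together: producing, uniformly over all generating pairs, a short word whose value lies in a prescribed torus or unipotent subgroup where we know how to kill it by a length-$O(\log q)$ power. The diameter theorem gives existence of \emph{some} short word equal to a given target, but not a single word that works for all pairs simultaneously; bridging this gap requires either a clever product-over-all-short-words construction (which costs a factor of $q^{c}$ in the length, matching the claimed bound $O_X(q^c \log(q)^{O_X(1)})$ and explaining why we cannot do better than $q^c$ here) or a structural argument that a short ``commutator-and-power'' word is forced to land in a proper subgroup of controlled type. I expect the honest construction to be the product-over-short-words one: enumerate the $O_X(q^c)$-or-so relevant short words, and form a single word of total length $O_X(q^c \log(q)^{O_X(1)})$ that is a law on generating pairs because for each such pair one of the factors already vanishes and the construction is rigged (via conjugation by fresh letters, as in the Kozma--Thom argument) so that no cancellation destroys the identity. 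Verifying that the rigging genuinely prevents cancellation for \emph{every} generating pair, while keeping the word reduced and of the stated length, is the delicate point.
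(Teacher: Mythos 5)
Your proposal correctly identifies the two raw ingredients (diameter bounds and power laws on tori/unipotents), but it is missing the one idea that makes the paper's proof work, and the substitute you offer does not give the claimed length. The paper does not reduce to a $\PSL_2(q^c)$ section and does not try to hit a fixed element or the identity. Instead it fixes the set $E_G = \lbrace g \in G : o(g) \mid b(X,q)\rbrace$ with $b(X,q) \in \lbrace q-1, q+1, q^3-1\rbrace$, proves $\lvert E_G\rvert = \Omega_X(\lvert G\rvert)$ (Proposition \ref{orderdivpropn}), and then runs a \emph{probabilistic selection}: take $m = O(\log\lvert G\rvert)$ independent random words $u_1,\dots,u_m$ of length $L = \log\lvert G\rvert^{O_X(1)}$. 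By Theorem \ref{DiamThm} and the mixing-time bound (Corollary \ref{rwcoroll}), for any fixed generating pair $(g,h)$ each $u_i(g,h)$ lands in $E_G$ with probability bounded below by some $C_2(X)>0$; the chance that all $m$ miss is below $\lvert G\rvert^{-2}$, and a union bound over the at most $\lvert G\rvert^2$ generating pairs shows that a \emph{deterministic} choice of $O(\log\lvert G\rvert)$ words suffices. One then raises each $u_i$ to the power $b(X,q)$ and combines the results with Lemma \ref{UnionLemma}. Crucially, the factor $q^{c}$ in the final length comes from the exponent $b(X,q)\approx q^c$, not from the number of probe words, which is only logarithmic.

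Your fallback construction -- enumerating \emph{all} reduced words of length at most the diameter bound $D$ and forming a rigged product -- cannot yield the stated bound: there are roughly $3^{D}$ such words, and with $D = \log(q)^{O_X(1)}$ (the unconditional bound available from Theorem \ref{DiamThm}) this count is superpolynomial in $q$. Your claim that there are ``$O_X(q^c)$-or-so relevant short words'' has no basis; no such polynomial-size enumeration exists without the random-sampling argument. The first step of your plan (reducing to a $\PSL_2(q^c)$ section and composing with a bounded-length word realising the section map) is also not implementable as stated: a section is a quotient of a subgroup, and there is no single bounded-length word sending every generating pair of $G$ to a generating pair of the section. So the proposal, as written, does not close; the missing idea is precisely the combination of equidistribution of polylogarithmic-length random words with a union bound over the $\lvert G\rvert^2$ generating pairs.
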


Theorem \ref{gensetmainthm} will be a tool in the proof of 
Theorem \ref{strongmainthm}, 
as well as being a result of interest in its own right. 

We make extensive use in the proofs of Theorems 
\ref{strongmainthm} and \ref{gensetmainthm} 
of upper bounds on the diameters of finite simple groups. 
The key conjecture in this area is due to Babai. 

\begin{conj}[\cite{BabSer}] \label{BabaiConj}
Let $G$ be a nonabelian finite simple group. Then:
\begin{center}
$\diam (G) = \log (\lvert G \rvert)^{O(1)}$.
\end{center}
\end{conj}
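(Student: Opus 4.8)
Since Conjecture~\ref{BabaiConj} is a central open problem, the only honest ``plan'' is to reduce to the irreducible cases via the Classification, dispatch the cases that current technology permits, and isolate the genuine obstruction. First I would invoke CFSG: a nonabelian finite simple group $G$ is, apart from the finitely many sporadic groups, either an alternating group $\Alt(n)$ or a finite group of Lie type $X(q)$. Since the sporadic groups contribute nothing asymptotically, the bound $\diam(G) = \log(\lvert G \rvert)^{O(1)}$ must be established for each of these two families, with the exponent and implied constant absolute.

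For the Lie type groups the tool is the \emph{product theorem}: if $G = X(q)$ has rank $r$ and $A \subseteq G$ is a symmetric generating set containing the identity, then either $A^3 = G$ or $\lvert A^3 \rvert \geq \lvert A \rvert^{1+\varepsilon}$ for some $\varepsilon = \varepsilon(r) > 0$ (Helfgott for $\SL_2(p)$ and $\SL_3(p)$; Pyber--Szab\'o and, independently, Breuillard--Green--Tao in general). Iterating, $O(\varepsilon^{-1} \log\log\lvert G \rvert)$ tripling steps fill $G$, so that $\diam(\Cay(G,A)) \leq \log(\lvert G \rvert)^{O_r(1)}$. Thus for each fixed type the conjecture is already a theorem, and — as the present paper exploits — for bounded rank one even obtains a uniform $\log(\lvert G \rvert)^{O(1)}$ bound; but the exponent produced this way degrades as $r \to \infty$.

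The remaining part, and the genuinely open one, is twofold: the alternating groups, and the classical groups whose rank grows with $q$. For $\Alt(n)$ the state of the art is the Helfgott--Seress bound $\diam(\Alt(n)) \leq \exp\big(O((\log n)^4 \log\log n)\big)$, which is only quasipolynomial in $\log\lvert \Alt(n)\rvert \asymp n\log n$; I would attempt to upgrade the Babai--Seress and Helfgott--Seress machinery — splitting a generating set according to the supports of its elements, building an element of small support, and then propagating growth along the chain $\Alt(k) \hookrightarrow \Alt(n)$ — to a genuine polynomial bound, while in parallel seeking a product theorem for classical groups with $\varepsilon$ \emph{independent} of the rank. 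This last point is the main obstacle: every known proof of growth in Lie type is ultimately local, bootstrapping from growth inside bounded-rank subgroups, tori, or root subgroups, and there is no known mechanism converting such local growth into global growth at a rate uniform in the rank. This is precisely the structural gap that forces the rank-dependence in Theorem~\ref{strongmainthm} and motivates the conditional results of this paper.
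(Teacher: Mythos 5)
The statement you were given is Babai's Conjecture, an open problem; the paper states it as a conjecture with no proof, relying unconditionally only on the bounded-rank case (Theorem \ref{DiamThm}, due to Breuillard--Green--Tao and Pyber--Szab\'o) and invoking the full conjecture only conditionally in Remark \ref{withbab}. Your proposal correctly recognizes this, and your survey of the known partial results (product theorems giving $\diam(G) \leq \log(\lvert G \rvert)^{O_r(1)}$ in rank $r$, the Helfgott--Seress quasipolynomial bound for $\Alt(n)$) and of the genuine obstruction (growth rates uniform in the rank) is accurate and fully consistent with the paper's treatment.
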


The definition of the diameter $\diam(G)$ of a finite group $G$ 
is deferred until Subsection \ref{diamsection}. 
Babai's Conjecture is still some way from being proven, 
in spite of remarkable progress in recent years. 
For the groups of Lie type, the state of the art is the following result. 

\begin{thm}[\cite{BGT,PySz}] \label{DiamThm}
Let $G = X(q)$ be as in Theorem \ref{strongmainthm} 
and let $n$ be as in Corollary \ref{mainthm}. Then:
\begin{center}
$\diam (G) \leq \log \lvert G \rvert ^{O_n (1)}$. 
\end{center}
\end{thm}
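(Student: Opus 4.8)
The plan is to obtain the diameter bound from a \emph{product theorem} of Helfgott type, established in this generality by Pyber--Szab\'o and, independently, by Breuillard--Green--Tao: there is $\varepsilon = \varepsilon(n) > 0$ such that for every generating set $A$ of $G$ with $1 \in A = A^{-1}$, either $A^3 = G$ or $\lvert A^3 \rvert \geq \lvert A \rvert^{1 + \varepsilon}$. Granting this, one iterates. Given a generating set $S$ of $G$, put $A_0 = S \cup S^{-1} \cup \{1\}$ and $A_k = A_0^{3^k}$; then $\lvert A_k \rvert \geq \min\bigl(\lvert G \rvert,\, 2^{(1+\varepsilon)^k}\bigr)$, so $A_k = G$ as soon as $(1+\varepsilon)^k > \log_2 \lvert G \rvert$, i.e. for some $k = O_\varepsilon(\log\log\lvert G\rvert)$. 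Since the ball of radius $3^k$ in $\Cay(G,S)$ is exactly $A_k$, this yields $\diam(\Cay(G,S)) \leq 3^k = \log\lvert G\rvert^{O_\varepsilon(1)}$, and hence $\diam(G) \leq \log\lvert G\rvert^{O_n(1)}$, since the rank $r$ of $X$ satisfies $r \leq n$ and $\varepsilon$ depends only on $r$.

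It remains to sketch the product theorem, which is the real content. The idea is to work inside the ambient connected simple algebraic group $\mathbf{G}$ over $\overline{\mathbb{F}}_q$ of rank $r \leq n$ (carrying a Steinberg endomorphism $F$ in the twisted cases), realising $G$ up to bounded-index and central corrections inside $\mathbf{G}^F$; since $\mathbf{G}$ is connected, $G$ and hence $\langle A \rangle$ is Zariski-dense in $\mathbf{G}$. Two tools do the work: \emph{escape from subvarieties} --- for some $m = m(n)$ the set $A^m$ is contained in no proper closed subvariety of $\mathbf{G}$ of bounded degree --- and the noncommutative \emph{Larsen--Pink inequality} --- for every closed subvariety $V \subseteq \mathbf{G}$ of dimension $d$ there is $k = k(n)$ with $\lvert A \cap V \rvert \ll_n \lvert A^k \rvert^{d/\dim\mathbf{G}}$, so $A$ cannot concentrate on low-dimensional sets such as proper subgroups or conjugacy classes. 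The decisive feature is that $m$, $k$ and the implied constants depend only on $n$, never on $q$.

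The heart of the matter is a dichotomy organised around a maximal torus. Using escape and counting one produces a regular semisimple $g \in A^m$; let $T = C_{\mathbf{G}}(g)^\circ$, a maximal torus of dimension $r$, and write $A' = A^m$. If $\lvert A' \cap T \rvert$ is large, comparable to $\lvert A' \rvert^{r/\dim\mathbf{G}}$, one exploits the abelian structure of $T^F$, a product of cyclic groups $\mathbb{F}_{q^{d_i}}^\times$, where growth is supplied either by sum--product estimates over finite fields or, following Pyber--Szab\'o, by a further dimension count inside $\mathbf{G}$ circumventing them; if $\lvert A' \cap T \rvert$ is small, the conjugates $A' \cap T^a$, $a \in A'$, point in enough independent directions that a bounded product of them together with $A'$ sweeps out a subvariety of full dimension, and a rank/dimension estimate gives $\lvert (A')^{O_n(1)} \rvert \geq \lvert A' \rvert^{1+\varepsilon_0}$. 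Either way $\lvert A^{O_n(1)} \rvert \geq \lvert A \rvert^{1+\varepsilon_0}$, and Ruzsa-type tripling inequalities convert this into $\lvert A^3 \rvert \geq \lvert A \rvert^{1+\varepsilon}$ with $\varepsilon = \varepsilon(n) > 0$, unless $A^3 = G$ already.

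The main obstacle is the product theorem itself, and within it the demand for uniformity in $q$: this forces one to run the Larsen--Pink and escape arguments purely at the level of the algebraic group with explicit control of degrees, and to treat every Lie type and \emph{both} characteristics --- in small characteristic one must cope with bad primes, inseparability, the anomalous behaviour of unipotent elements, and the danger that $A^m$ contains no usable regular semisimple element in the most degenerate situations --- together with the twisted-group bookkeeping for $^2\!B_2$, $^2\!G_2$, $^2\!F_4$ and $^3\!D_4$ via Steinberg endomorphisms. The full execution is carried out in \cite{BGT, PySz}; the above only indicates the shape of the argument.
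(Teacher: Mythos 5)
The paper does not prove Theorem \ref{DiamThm} at all: it is imported verbatim from \cite{BGT,PySz}, and your outline is a faithful summary of exactly the argument in those references (the product theorem with $\varepsilon=\varepsilon(n)$, followed by the standard tripling iteration giving diameter $3^{O_\varepsilon(\log\log\lvert G\rvert)}=\log\lvert G\rvert^{O_n(1)}$). So your proposal is correct and takes the same route as the paper's source; no discrepancy to report.
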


The dependence of the implied constant in Theorem \ref{DiamThm} upon $n$ is not given explicitly, 
and it is to be expected that the constants arising from existing proofs would be quite large. 
As such the implied constants in Theorem \ref{strongmainthm}; 
Corollary \ref{mainthm}
and Theorem \ref{gensetmainthm} are similarly inexplicit. 
Nevertheless, conditional on Babai's Conjecture, 
and using the result of \cite{KozTho} we can say a little more. 

\begin{rmrk} \label{withbab}
Assume that Conjecture \ref{BabaiConj} is true. 
Let $G = X(q)$ and $n$ be as in Corollary \ref{mainthm}, 
let $a$ be as in Theorem \ref{strongmainthm} 
and let $c(X)$ be as in Theorem \ref{gensetmainthm}. 
Then there exists a word $w_G \in F_2$ of length
\begin{center}
$O (a_n q^{a(X)} \log (q)^{O(b_n)})$
\end{center} 
which is a law for $G$, 
and a non-trivial reduced word $w^{\prime} _G \in F_2$ of length
\begin{center}
$O (c_n q^{c(X)} \log (q)^{O(d_n)})$
\end{center} 
such that
\begin{center}
$\lbrace (g,h) \in G : \langle g,h \rangle = G \rbrace \subseteq Z(G,w^{\prime})$
\end{center}
where $a_n$, $b_n$, $c_n$ and $d_n$ 
are functions of $n$ which are explicitly computable. 
\end{rmrk}

For the sake of avoiding a large amount of tedious book-keeping, 
we will not give explicit bounds on the growth of $a_n$, $b_n$, 
$c_n$ or $d_n$, and will content ourselves with remarking, 
at the appropriate points in the proofs of Theorems 
\ref{strongmainthm} and \ref{gensetmainthm}, 
where computing the dependence of the laws on $n$ is a non-trivial matter, according to the best currently known dependences. 

It is by now well-known that a generic pair of elements 
in a finite simple group of Lie type generates the group 
\cite{KanLub,LieSha,LieProb}. 
As such, the words $w_G$ arising in Theorem \ref{gensetmainthm} 
are \emph{almost laws} for $G$, 
in the sense that the probability that a random pair $(g,h)$ 
of elements of $G$ lies in $Z(G,w_G)$ tends to $1$ 
as $\lvert G \rvert \rightarrow \infty$. 
In fact, for groups of bounded rank we can do even better: 
Breuillard, Green, Guralnick and Tao \cite{BGGT} showed that 
Cayley graphs of such groups with respect to 
random pairs of generators form \emph{expanders}. 
In particular, these Cayley graphs have logarithmic diameter 
and lazy random walks on them have logarithmic mixing time. 
From this we conclude the following bound for the length 
of almost laws. 

\begin{thm} \label{randommainthm}
Let $G=X(q)$ be as in Theorem \ref{strongmainthm} 
and let $c$ be as in Theorem \ref{gensetmainthm}. 
Then there exist non-trivial reduced words $w_G \in F_2$ of length 
\begin{center}
$O_X (q ^{c} \log (q))$
\end{center}
such that, if $g , h$ are independent uniform random 
variables on $G$, 
\begin{center}
$\mathbb{P} \big[ (g,h) \in Z (G,w_G) \big] \rightarrow 1$ 
as $q \rightarrow \infty$. 
\end{center}
\end{thm}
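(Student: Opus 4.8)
The plan is to combine Theorem \ref{gensetmainthm} with the expansion results of \cite{BGGT} for bounded rank and with a direct length-reduction argument. Recall that Theorem \ref{gensetmainthm} produces a non-trivial reduced word $w_G \in F_2$ of length $O_X(q^c \log(q)^{O_X(1)})$ vanishing on every generating pair of $G$. Since a random pair $(g,h)$ generates $G$ with probability tending to $1$ as $q \to \infty$ (by \cite{KanLub,LieSha,LieProb}), this $w_G$ already satisfies $\mathbb{P}[(g,h) \in Z(G,w_G)] \to 1$; the only thing left to do is to shave the polylogarithmic factor down to a single $\log(q)$. So the real content is to replace the polylog by $\log(q)$ while still vanishing with high probability.

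First I would recall how the word $w_G$ in Theorem \ref{gensetmainthm} is built: for groups of Lie type of bounded rank, the expensive factor is the diameter bound used to drive a pair of generators into a prescribed small set, and by Theorem \ref{DiamThm} (for bounded rank, $n = O_X(1)$) this costs $\log|G|^{O_X(1)} = \log(q)^{O_X(1)}$. For a random pair $(g,h)$, however, \cite{BGGT} tells us that the Cayley graph $\Cay(G, \{g^{\pm 1}, h^{\pm 1}\})$ is an expander with probability tending to $1$, hence has diameter $O_X(\log|G|) = O_X(\log q)$. This means that with high probability the much weaker word obtained by replacing the diameter-driven step by its expander counterpart still performs the required task, and that word has length $O_X(q^c \log q)$: the factor $q^c$ comes from the bounded part of the construction (e.g. the order of an element of a torus of appropriate size, or a power of $q^c$ arising from the embedded $\PSL_2$ or Suzuki section), and the only $\log$ factor is the one diameter bound, now linear.

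Concretely, the steps are: (1) fix the structural skeleton of the word from the proof of Theorem \ref{gensetmainthm}, isolating which sub-words exist solely to witness short conjugators/products inside $G$ via the diameter bound; (2) replace each such sub-word, applied to a random pair $(g,h)$, by the corresponding sub-word of length $O_X(\log q)$ that exists whenever $\Cay(G,\{g^{\pm 1},h^{\pm 1}\})$ has diameter $O_X(\log q)$; (3) invoke \cite{BGGT} to conclude that the event ``$(g,h)$ generates $G$ and the Cayley graph has diameter at most $C_X \log q$'' has probability tending to $1$; and (4) verify that on this event the modified word $w_G$ of total length $O_X(q^c \log q)$ indeed evaluates to $1$ at $(g,h)$, and that $w_G$ is a fixed non-trivial reduced element of $F_2$ (independent of $(g,h)$ but possibly depending on $q$). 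One subtlety is that the conjugating/expressing words furnished by an abstract diameter bound are not canonical, so to keep $w_G$ a \emph{single} fixed word one must either pass to a word that works uniformly over all pairs in the high-probability set — which is fine, since we may take the worst case length $C_X \log q$ and there are only finitely many reduced words of that length over which a disjunction can be taken, or more cleanly, absorb the choice into a product over a bounded-size ``universal'' sequence of generators as in the mixing-time formulation of \cite{BGGT}.

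The main obstacle I expect is precisely this bookkeeping step (4): ensuring that a single reduced word simultaneously handles all pairs in the good set without reintroducing a polylog factor, and checking that the laziness/mixing-time version of the expansion statement from \cite{BGGT} can be converted into the deterministic-diameter statement actually needed to trim the word — the random walk gives a \emph{distribution} supported near uniform after $O_X(\log q)$ steps rather than a guaranteed short representative for a \emph{fixed} target element. The standard fix is to use the spectral gap to get genuine logarithmic diameter (expanders of bounded degree have $\diam = O(\log |G|)$), so this is not a serious difficulty, but it is the point where one must be careful. The rest is routine: the probability estimate is immediate from \cite{BGGT,KanLub,LieSha,LieProb}, and the length estimate is immediate from the construction once the diameter input is linear in $\log q$.
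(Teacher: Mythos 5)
Your proposal is correct and follows essentially the same route as the paper: the only source of the polylogarithmic factor in Theorem \ref{gensetmainthm} is the length $L=\log\lvert G\rvert^{O_X(1)}$ of the random walks dictated by the diameter bound, and for random pairs one restricts to the high-probability event of Theorem \ref{expanderthm} (the expansion result of \cite{BGGT}) so that walks of length $L=O_X(\log q)$ suffice, then reruns the same argument. The paper works directly with the mixing-time formulation of \cite{BGGT} (Theorem \ref{expanderthm}), so the diameter-versus-mixing subtlety you raise in step (4) does not actually arise.
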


Theorem \ref{randommainthm} complements the work of Zyrus 
\cite{Zyrus}, who produced short almost laws for $\PSL_n (q)$, 
and also provided lower bounds for the length of almost 
laws in this case. 

\subsection{Background}

The study of the structure of laws in groups is a classical 
subject, growing out of the work of Birkhoff \cite{Birk} 
in universal algebra, and further developed by many authors 
(see \cite{HNeum} and the references therein). 
At this point the behaviour of laws for finite simple groups 
was already a matter of considerable interest. 
For instance it was noted in \cite{HNeum} 
that non-isomorphic finite simple groups generate 
distinct varieties, 
and the question was posed whether there exists 
an infinite family of non-abelian finite simple groups 
satisfying a common law. 
Jones \cite{Jones} answered this question in the negative 
for the alternating groups and groups of Lie type 
and the later completion of the Classification of 
Finite Simple Groups established that this was sufficient 
to give a negative answer in general. 
Jones' result tells us that for any sequence $(G_i)_{i \in \mathbb{N}}$
of distinct finite simple groups of Lie type, 
the length of the shortest laws satisfied by $G_i$ 
tends to infinity. 
The results of the present paper address the 
rate of this divergence, emphasizing the case 
of groups of bounded rank. 

Prior to our work, the best upper bound on the length of laws for 
finite simple groups of Lie type was given by Kozma 
and the second named author. 

\begin{thm} \label{KozThomLie}
Let $G$ be a finite simple group of Lie type of Lie rank $r$, 
over a field of order $q$. 
Then there exists a word $w_G \in F_2$ of length at most
$q^{O (r)}$
which is a law for $G$. Moreover, if $G = \PSL_n (q)$, 
then $w_G$ is of length at most
\begin{center}
$\exp \big(O(n^{1/2} \log (n) )\big) q^{n-1}$. 
\end{center}
\end{thm}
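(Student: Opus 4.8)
My plan is to route everything through the Jordan decomposition. Fix a faithful matrix realization $\hat G \leq \GL_n(\overline{\mathbb F}_q)$ of an isometry or covering group of $G$ with $n = O(r)$: the natural module for the classical types, a minimal module in general. Every $g \in \hat G$ has a Jordan decomposition $g = g_s g_u = g_u g_s$ into commuting semisimple and unipotent parts, both lying in $\hat G$, so that $\langle g \rangle = \langle g_s\rangle\times\langle g_u\rangle$ and $|g| = \lcm(|g_s|,|g_u|)$. A unipotent element of $\GL_n$ in characteristic $p$ has order dividing $p^{\lceil\log_p n\rceil}\leq pn$; hence if $v\in F_2$ is any word with $v(g,h)$ unipotent for all $g,h\in\hat G$, then $v^{p^{\lceil\log_p n\rceil}}$ is a law for $\hat G$, and so for its quotient $G$, of length at most $pn\cdot|v|$. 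The whole problem thus reduces to building a short word $v$ with $v(g,h)_s = 1$ for all $g,h$.

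The cheap choice is a pure power $v(x,y)=x^m$, equivalently $w_G = x^{\exp(G)}$. Every semisimple element lies in a maximal torus, there are at most $|W|$ conjugacy classes of maximal tori, and each has order at most $(q+1)^r$; so $\exp(G)$ divides an integer of size at most $(q+1)^{r|W|}\cdot p^{\lceil\log_p n\rceil} = q^{O_r(1)}$, giving a law of that length. This already proves the theorem when the rank is bounded, but it is far too weak in general: $r|W|$ is not linear in $r$ (for type $A_{n-1}$ it is $(n-1)n!$), and even $\exp(G)$ itself is essentially $\prod_d\Phi_d(q)$ over the relevant degrees $d$ and so has degree $\Theta(r^2)$ in $q$. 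Reaching exponent $O(r)$ therefore forces a non-power word --- one must genuinely exploit the second generator.

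This last point is the crux and the step I expect to be the main obstacle. The key structural fact to leverage is that for a \emph{single} $g$ the primary decomposition of $\mathbb F_q^n$ under $g$ involves irreducible factors of degrees $\delta_1,\dots,\delta_k$ with $\sum_i\delta_i\leq n$, so only $O(\sqrt n)$ \emph{distinct} field degrees occur and $|g_s|$ divides a least common multiple of $O(\sqrt n)$ numbers, one dividing $q^{\delta_i}-1$ per distinct $\delta_i$. I would try to build $v$ recursively along a $g$-invariant flag, using $y$ to move between the primary components, so that $v$ picks up a factor of size about $q^{\delta}$ \emph{once per field degree} rather than once per torus class; the subexponential prefactor $\exp(O(n^{1/2}\log n))$ in the $\PSL_n$ case would then emerge from the combinatorics of partitions of $n$ bounding how these $O(\sqrt n)$ factors combine, exactly as in Landau's estimate for the orders of permutations. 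Making this work for \emph{every} pair $(g,h)$ while keeping $|v| = q^{O(r)}$ --- in particular, dodging the $\lcm$-over-torus-classes blow-up --- is the hard part; the remaining pieces (the reductions above, passing from $\SL_n$ down to $\PSL_n$ through the center, and assembling $w_G = v^{p^{\lceil\log_p n\rceil}}$) are routine.
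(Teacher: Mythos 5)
This theorem is quoted from \cite{KozTho}; the present paper does not reprove it, but the engine behind it is visible here as Lemma \ref{UnionLemma} together with Example \ref{PSL2ex}, and that is precisely the piece missing from your proposal. Your reductions are fine as far as they go: the Jordan decomposition, the bound $p^{\lceil\log_p n\rceil}$ on unipotent orders, the observation that a single power word $x^{\exp(G)}$ is hopeless because the lcm over torus classes has degree $\Theta(r^2)$ in $q$, and the key divisibility fact that for each individual $g$ the order of $g_s$ divides $\prod_{\delta\in S}(q^{\delta}-1)\leq q^{n}$ for some set $S$ of distinct degrees with $\sum_{\delta\in S}\delta\leq n$. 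But you then infer that the only way forward is a word $v$ that structurally exploits the second generator, built recursively along a $g$-invariant flag, and you concede you cannot construct it. That inference is wrong, and the step you flag as ``the hard part'' is not part of the proof at all.

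The resolution is combinatorial, not structural. The possible sets $S$ are partitions of integers at most $n$ into distinct parts, so there are $m=\exp(O(\sqrt{n}))$ of them, independently of $q$. For each $S$ set $N_S=p^{\lceil\log_p n\rceil}\prod_{\delta\in S}(q^{\delta}-1)$ and take the one-variable power word $x^{N_S}$, of length at most $pn\,q^{n}$; every pair $(g,h)$ lies in $Z(G,x^{N_S})$ for at least one $S$, because $o(g)$ divides some $N_S$. Lemma \ref{UnionLemma} (Lemma 2.2 of \cite{KozTho}) then produces a single non-trivial word $w\in F_2$ of length $16m^2\max_S N_S$ whose vanishing set contains $\bigcup_S Z(G,x^{N_S})=G\times G$, i.e.\ a law of length $\exp(O(\sqrt{n}\log n))\,q^{n}$; the shaving to $q^{n-1}$ for $\PSL_n(q)$ comes from the determinant and centre constraints, and the general $q^{O(r)}$ bound follows by embedding $G$ in $\GL_n$ with $n=O(r)$ and absorbing $m^2=\exp(O(\sqrt{r}\log r))\leq q^{O(r)}$. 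The second variable enters only through the formal commutator construction inside Lemma \ref{UnionLemma} --- no word ever needs to ``move between primary components'' --- and indeed your stronger demand that $v(g,h)$ be unipotent for \emph{all} pairs is not satisfiable by any useful $v$ and is not what the argument requires. Without this union-of-vanishing-sets lemma your proposal does not close.
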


Theorem \ref{KozThomLie} builds upon the work of Hadad \cite{Had}, 
and uses the Jordan decomposition to restrict the 
possibilities for the order of an element in $\PGL_n(q)$. 
The behaviour of element orders in groups of Lie type 
is a theme to which we shall return several times in what follows. 
The main result of \cite{Had} claimed a stronger upper bound, 
but the proof was found to contain a gap. 
Nevertheless, we have from \cite{Had} the following observation 
concerning lower bounds on the length of laws for 
finite simple groups of Lie type, 
which in particular shows that the exponent $\lfloor n/2 \rfloor$ 
in Corollary \ref{mainthm} is best possible. 

\begin{thm} \label{lowerboundex}
Let $k \geq 1$ and let $ w \in F_k$ be a law for $\PSL_2(q)$. 
Then $w$ has length at least $(q-1)/3$. 
We have $\PSL_2 (q^{\lfloor n/2 \rfloor})$ as a section of
 $\PSL_n (q)$ 
by restriction of scalars, so $\PSL_n (q)$ 
has no law of length less than $\Omega(q^{\lfloor n/2 \rfloor})$. 
\end{thm}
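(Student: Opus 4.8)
The plan is to prove the two assertions separately: the restriction-of-scalars statement is a short deduction from the first sentence, so the real content is the lower bound for $\PSL_2(q)$, which I would attack by producing an explicit family of substitutions into $\SL_2(q)$.

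For the embedding: viewing $\mathbb{F}_{q^a}^2$ as an $\mathbb{F}_q$-vector space of dimension $2a$ turns the $\mathbb{F}_{q^a}$-linear action of $\SL_2(q^a)$ into an $\mathbb{F}_q$-linear one on which the determinant (over $\mathbb{F}_q$, the norm of the old one) is still $1$; this gives $\SL_2(q^a)\hookrightarrow\SL_{2a}(q)\hookrightarrow\SL_n(q)$ whenever $2a\le n$. Taking $a=\lfloor n/2\rfloor$ and passing to the relevant quotients by central subgroups exhibits $\PSL_2(q^{\lfloor n/2\rfloor})$ as a section of $\PSL_n(q)$. Since a law of a group is a law of each of its sections, the first sentence then forces every law of $\PSL_n(q)$ to have length at least $(q^{\lfloor n/2\rfloor}-1)/3=\Omega(q^{\lfloor n/2\rfloor})$.

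For the main statement I would argue by contraposition: given a non-trivial reduced $w\in F_k$ of length $\ell$ below the stated threshold, exhibit a substitution into $\SL_2(q)$ on which $w$ takes a value outside $\{\pm I\}$. First a trivial reduction: substituting all generators into a cyclic subgroup of order $r:=(q-1)/\gcd(2,q-1)$ sends $w$ to a power of a fixed element, with exponent the image of $w$ in $\mathbb{Z}$; if some exponent sum of $w$ is non-zero this already gives $\ell\ge r\ge (q-1)/2$, so I may assume $w\in[F_k,F_k]$. The core input is then that $A=\begin{pmatrix}1 & t\\ 0 & 1\end{pmatrix}$ and $B=\begin{pmatrix}1 & 0\\ t & 1\end{pmatrix}$ generate a free group of rank two in $\SL_2(\mathbb{F}_q(t))$ — a ping-pong argument using the valuation at infinity, for which $v_\infty(t)<0$. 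Hence for any non-trivial reduced $v\in F_2$ the matrix $v(A,B)$ has entries in $\mathbb{F}_q[t]$ of degree at most $|v|$ and equals neither $I$ nor $-I$ (free groups are torsion-free), so at most $2|v|+1$ values of $t\in\mathbb{F}_q$ fail to give $v(A,B)|_{t}\notin\{\pm I\}$; if $2|v|+1<q$ some value works and $v$ is not a law of $\PSL_2(q)$. It remains to realise $w$ as $v(A,B)$ for a non-trivial $v$ of length comparable to $\ell$: choosing words $u_1,\dots,u_k\in F_2$ in sufficiently general position and putting $v=w(u_1,\dots,u_k)$ keeps $v$ non-trivial, and (since substituting words into a law yields a law) $v$ is again a law, so $|v|\ge(q-1)/2$; a bound $|v|\le\tfrac32\ell$ on the resulting cancellation would then give $\ell\ge(q-1)/3$.

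The main obstacle is exactly that last step: passing from $k$ variables to two with only a bounded-factor loss in length, \emph{uniformly in $k$}. The naive injection $x_i\mapsto A^{p_i}BA^{-p_i}$ (for distinct primes $p_i$) inflates the length by a factor growing with $k$, and even optimal general-position choices of the $u_i$ seem to cost a factor of order $\log k$, since only boundedly many short reduced words in $F_2$ can be pairwise in general position. Obtaining the clean, $k$-independent constant $(q-1)/3$ thus requires a genuinely combinatorial argument exploiting that $w$ is reduced and balanced — or, alternatively, the observation that if $w$ degenerates under every economical substitution then $w$ satisfies a short law of a solvable group and hence, $\PSL_2(q)$ being non-solvable, cannot be a law of $\PSL_2(q)$ at all — and this is where I expect the argument of \cite{Had} to do its real work; the gap between $(q-1)/3$ and the ``free'' value $(q-1)/2$ presumably absorbs this loss together with the $\pm I$ bookkeeping.
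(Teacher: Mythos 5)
The paper does not actually prove Theorem \ref{lowerboundex}; it quotes it from \cite{Had}, so your proposal has to stand on its own. Your restriction-of-scalars argument and the reduction to $w\in[F_k,F_k]$ via exponent sums are both fine, but the ``core input'' of your main argument is false in positive characteristic: the matrices $A=\left(\begin{smallmatrix}1&t\\0&1\end{smallmatrix}\right)$ and $B=\left(\begin{smallmatrix}1&0\\t&1\end{smallmatrix}\right)$ are unipotent, so $A^p=B^p=I$ where $p=\charac(\mathbb{F}_q)$, and they cannot generate a free group in $\SL_2(\mathbb{F}_q(t))$. The ping-pong argument at $v_\infty$ only yields $\langle A,B\rangle\cong(\mathbb{Z}/p)*(\mathbb{Z}/p)$. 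Consequently the assertion that $v(A,B)\neq\pm I$ for every non-trivial reduced $v$ fails: the word $v=[x^p,y]$ is reduced, lies in $[F_2,F_2]$, has length $2p+2$ (far below $(q-1)/3$ when $q=p^e$ with $e$ large), and satisfies $v(A,B)=I$ identically. Since the substitution cannot see such words, the polynomial root-counting that follows collapses, and the argument proves nothing even for the words it is meant to handle. The repair is to use semisimple generators, e.g. $\mathrm{diag}(t,t^{-1})$ and a generic conjugate of it, for which ping-pong at $v_\infty$ genuinely gives a free group, the entries of $v(A,B)$ are Laurent polynomials in $t$ of degree $O(|v|)$, and $v(A,B)=-I$ is excluded because a free group is torsion-free and centerless; your specialization count then goes through.

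The second gap is the one you flag yourself: the passage from $F_k$ to $F_2$ with a constant independent of $k$. This is not a technicality you can wave at \cite{Had} to fix, because the standard argument simply never performs that reduction. One substitutes each $x_i$ by its own generic parametrized matrix $M_i$, uses the fact that a generic $k$-tuple in $\SL_2$ over a field of large transcendence degree generates a free group of rank $k$ (each relation $w=1$ cuts out a proper subvariety of $\SL_2^k$), and then specializes the parameters to $\mathbb{F}_q$ one at a time, each specialization excluding only $O(|w|)$ values; this is what makes the constant uniform in $k$, and the slack between $(q-1)/2$ and $(q-1)/3$ absorbs the degree of the parametrization together with the $\pm I$ bookkeeping. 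As written, your proposal establishes neither the constant $1/3$ nor any $k$-independent lower bound.
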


Complementary to these results for groups of Lie type, 
one may ask for short laws for the alternating 
and symmetric groups. 
The strongest available result in this direction 
is that found in \cite{KozTho}. 

\begin{thm}  \label{LawsforAlt}
There exists a law for $\Sym (n)$ of length at most: 
\begin{center}
$\exp (O(\log (n)^4 \log \log (n)))$. 
\end{center}
Further, assuming Conjecture \ref{BabaiConj} holds, 
there exists a law for $\Sym (n)$ of length at most: 
\begin{center}
$\exp (O(\log (n) \log \log (n)))$. 
\end{center}
\end{thm}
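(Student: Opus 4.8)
The plan is to argue by strong induction on $n$, producing for each $n$ a single word that is a law for $\Sym(n)$ of length at most $F(n)$, where $F(n) = \exp(O(\log(n)^4 \log\log(n)))$ unconditionally and $F(n) = \exp(O(\log(n) \log\log(n)))$ under Conjecture \ref{BabaiConj}. The first reduction is to the alternating group: since $x^2$ takes values in $\Alt(n)$ for every $x \in \Sym(n)$, any law $u \in F_2$ for $\Alt(n)$ gives the law $u(x^2,y^2)$ for $\Sym(n)$, of at most twice the length, so it suffices to bound the length of a law for $\Alt(n)$.

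Fix $g,h \in \Alt(n)$. If $\langle g,h\rangle$ is proper it lies in a maximal subgroup $M < \Alt(n)$, and by the O'Nan--Scott theorem $M$ is intransitive, imprimitive, or primitive of affine, diagonal, product, or almost simple type. In each case a law for $M$ — and hence for $\langle g,h\rangle$ — can be assembled from laws for symmetric or alternating groups of degree strictly smaller than $n$ together with laws for finite simple groups of Lie type of rank small relative to $n$: an intransitive $M$ lies in $\Sym(a)\times\Sym(b)$ with $a+b=n$, and a law for $\Sym(\max(a,b))$ is automatically a law for the product; an imprimitive $M$ lies in $\Sym(a)\wr\Sym(b)$ with $ab=n$, and a law for the wreath product is obtained by substituting a law for the quotient $\Sym(b)$ into a law for the base factor $\Sym(a)$; the diagonal and product cases reduce similarly to smaller symmetric or alternating groups and to simple groups of much smaller order; and for the affine and almost-simple-of-Lie-type cases Theorem \ref{KozThomLie} supplies a law of length polynomial in $n$. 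The point is to carry out this assembly so that the resulting lengths obey a recursion in which the genuinely expensive branch reduces the degree to roughly $\sqrt n$ (so that it terminates after $O(\log\log n)$ levels, each costing a $\mathrm{poly}(n)$ factor), while the branches whose components can have degree as large as $n-1$ are combined into the final word at only mild cost. The one remaining case is $\langle g,h\rangle = \Alt(n)$.

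For that case I would invoke the diameter bounds. Fix a prime $\ell$ with $2n/3 < \ell \le n-1$ and let $\sigma \in \Alt(n)$ be an $\ell$-cycle, so that $\sigma^\ell = 1$, the conjugacy class of $\sigma$ generates $\Alt(n)$, and every element of $\Alt(n)$ is a product of a bounded number of conjugates of $\sigma^{\pm 1}$. Since $(g,h)$ generates $\Alt(n)$ and $\diam(\Alt(n)) \le D$, every element of $\Alt(n)$ — in particular every conjugate of $\sigma$ occurring in such a product — is the value at $(g,h)$ of some reduced word in $F_2$ of length at most $D$; exploiting this one builds a word of length $\mathrm{poly}(n)\cdot D$ that vanishes on every generating pair of $\Alt(n)$. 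Inserting the unconditional bound $\diam(\Alt(n)) \le \exp(O(\log(n)^4\log\log(n)))$ of Helfgott and Seress, or, under Conjecture \ref{BabaiConj}, the bound $\diam(\Alt(n)) \le (\log|\Alt(n)|)^{O(1)} = \exp(O(\log(n)))$, and feeding this into the recursion of the previous paragraph (whose $O(\log\log n)$ levels multiply it by a further factor $\exp(O(\log(n)\log\log(n)))$) produces the two stated bounds — the $\exp(O(\log(n)\log\log(n)))$ overhead being absorbed into the unconditional estimate and accounting precisely for the stated exponent in the conditional one.

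I expect the main obstacle to lie in making the last step uniform: converting the diameter bound into an \emph{honest single} short word valid for every generating pair of $\Alt(n)$. The obvious expression — ``write each generator as a product of short words equal to conjugates of $\sigma$'' — depends on the pair $(g,h)$, and replacing it by a pair-independent word while keeping the overhead polynomial in $n$ rather than exponential in $D$ is the heart of the matter; a closely related difficulty, already mentioned above, is to combine the laws for the various O'Nan--Scott types into a single word without the length blowing up over the near-$n$ sub-cases. These are exactly the points established in \cite{KozTho}.
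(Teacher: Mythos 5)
First, a point of orientation: the paper does not prove Theorem \ref{LawsforAlt} at all --- it is imported verbatim from \cite{KozTho}, so there is no internal proof to compare against. Your sketch is, in outline, the strategy of that reference (split into generating and non-generating pairs, O'Nan--Scott for the maximal subgroups, diameter bounds for the generating case), and you correctly identify where the difficulties lie. But the two steps you defer are precisely the load-bearing ones, and one of them, as you have set it up, would fail.

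The step that fails is the intransitive branch of your strong induction. You propose to handle an intransitive $M \leq \Sym(a)\times\Sym(b)$, $a+b=n$, by a law for $\Sym(\max(a,b))$, i.e.\ possibly for $\Sym(n-1)$, and then to combine this with the words for the other cases ``at only mild cost''. But the only combination tool available (Lemma \ref{UnionLemma}) multiplies the length by a constant factor $\geq 16m^2$ at each level, so a recursion that passes through $\Sym(n-1)$ at every degree accumulates a factor exponential in $n$, swamping $\exp(O(\log(n)^4\log\log(n)))$. The fix in \cite{KozTho} is structural, not a bookkeeping refinement: one inducts on words $T_m$ vanishing on all pairs generating a \emph{transitive} subgroup of $\Sym(m)$, and observes that the single word obtained by applying Lemma \ref{UnionLemma} to $T_1,\dots,T_n$ is automatically a law for $\Sym(n)$, because any pair vanishes under a word if and only if its restriction to each orbit does. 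Intransitive subgroups then never enter the recursion, and the only genuine recursion is through the imprimitive case $\Sym(a)\wr\Sym(b)$ with $ab=m$, where Lemma \ref{ExtnLemma} multiplies lengths and superadditivity of $\log^k$ closes the estimate. The second gap is the generating case: your device of writing conjugates of a fixed $\ell$-cycle $\sigma$ as words of length $\leq \diam(\Alt(n))$ in $(g,h)$ is inherently pair-dependent, as you concede, and the fact that every element is a bounded product of conjugates of $\sigma^{\pm1}$ plays no role in the actual argument. What is needed --- and what both \cite{KozTho} and this paper's proof of Theorem \ref{gensetmainthm} use --- is the probabilistic uniformization: the $\ell$-cycles form a subset $E$ of density $\geq 1/\mathrm{poly}(n)$ satisfying the power law $x^\ell$; one takes $O(\log\lvert G\rvert)$ independent random words of length equal to the mixing time (controlled by $\diam^2\log\lvert G\rvert$), applies a union bound over all $\lvert G\rvert^2$ generating pairs to extract a \emph{deterministic} family of words at least one of which lands in $E$ for every pair, and then combines their $\ell$-th powers by Lemma \ref{UnionLemma}. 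Without this counting argument the diameter bound cannot be converted into a single word, so the conditional and unconditional exponents in the statement are not actually derived by your proposal.
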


Theorem \ref{LawsforAlt} will also be useful for bounding 
the functions $a_n$, $b_n$, $c_n$ and $d_n$ 
in Remark \ref{withbab}, 
since groups of Lie type of large rank 
will contain large symmetric or alternating subgroups. 

Meanwhile, short almost laws for the symmetric groups 
and for $\PSL_n(q)$ 
were produced by Zyrus \cite{Zyrus}. 
In the latter case, lower bounds on the length 
of almost laws are also given. 

\begin{thm}[\cite{Zyrus}]
There are non-trivial reduced words $w_n \in F_2$ 
of length $$O(n^8 \log(n)^{O(1)})$$ such that 
if $g,h$ are independent uniform random variables on $\Sym(n)$ then
\begin{center}
$\mathbb{P} \big[ (g,h) \in Z (\Sym(n),w_n) \big] \rightarrow 1$
as $n \rightarrow \infty$. 
\end{center}
\end{thm}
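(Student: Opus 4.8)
The plan is to reduce the statement to a ``constructive recognition'' problem for $\Sym(n)$ and $\Alt(n)$, and then to feed the output into a short presentation. First I would use the classical theorem of Dixon, together with its quantitative refinements, which say that a uniform random pair $(g,h) \in \Sym(n) \times \Sym(n)$ generates $\Sym(n)$ or $\Alt(n)$ with probability $1 - o(1)$. Since, moreover, the cycle-structure statistics of a random permutation that will be relevant below (the number of cycles, the existence of a cycle of prime length in a prescribed dyadic range, the support of a bounded power, and so on) all concentrate, one may fix once and for all a set $\mathcal{G}_n \subseteq \Sym(n) \times \Sym(n)$ of density $1 - o(1)$, cut out by finitely many such conditions, and it then suffices to produce a single non-trivial reduced word $w_n \in F_2$ of length $O(n^8 \log(n)^{O(1)})$ with $\mathcal{G}_n \subseteq Z(\Sym(n), w_n)$. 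It is worth noting here that one cannot simply invoke Theorem \ref{LawsforAlt}: that gives a genuine law for $\Sym(n)$, but only of quasi-polynomial length, and the whole point of passing to random pairs is that recognition, and hence a polynomial-length word, becomes available.

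The technical heart of the argument is to show that there are words $\sigma_1, \ldots, \sigma_k \in F_2$ of length $O(n^{O(1)} \log(n)^{O(1)})$ such that, for \emph{every} $(g,h) \in \mathcal{G}_n$, the tuple $(\sigma_1(g,h), \ldots, \sigma_k(g,h))$ is, after a suitable relabelling of $[n]$, a fixed standard generating tuple for $\langle g,h \rangle$ --- for definiteness, the Coxeter transpositions $(1\,2), (2\,3), \ldots, (n-1\,n)$ when $\langle g,h\rangle = \Sym(n)$, and an analogous tuple of $3$-cycles when $\langle g,h \rangle = \Alt(n)$. The delicate point is that the naive approach --- raising a random permutation to its order in order to isolate a single long cycle --- produces words of quasi-polynomial length, since the order of a random permutation is typically $\exp(\Theta((\log n)^2))$; instead one must work only with \emph{bounded} powers together with conjugations, exploiting that a typical element of $\mathcal{G}_n$ carries a cycle of prime length $\ell$ with $n/2 < \ell \le n$ whose complement is suitably controlled, so that a short power isolates an $\ell$-cycle, and then bootstrapping to standard generators using the two generators jointly --- exactly the mechanism underlying the polynomial-time black-box recognition algorithms for $\Sym(n)$ and $\Alt(n)$. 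One further has to check that a single fixed list of words handles all of $\mathcal{G}_n$ at once; this is arranged by pinning down the relevant parameters inside the definition of $\mathcal{G}_n$ at the cost of a bounded loss in density, and, if necessary, by forming a bounded product over finitely many cases. A careful accounting of the word lengths at this stage, which I would not reproduce in detail, is what produces the exponent $8$.

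To finish, I would fix a presentation $\langle y_1, \ldots, y_k \mid r_1, \ldots, r_m \rangle$ of $\Sym(n)$ (respectively $\Alt(n)$) on the standard generators above whose total relator length $\sum_j |r_j|$ is polynomial in $n$ --- such presentations are classical, and in any case follow from the known short presentations of the finite simple groups --- and set $w_n := r_j(\sigma_1(g,h), \ldots, \sigma_k(g,h))$ for a suitably chosen relator $r_j$, where $r_j$ is chosen (or the substitution perturbed by a bounded amount) so that $w_n$ does not freely reduce to the empty word. For $(g,h) \in \mathcal{G}_n$ the images $\sigma_i(g,h)$ satisfy the defining relations of $\Sym(n)$ (respectively $\Alt(n)$), so $w_n(g,h) = 1$; the length of $w_n$ is the product of the two polynomial bounds above, which is $O(n^8 \log(n)^{O(1)})$; and $w_n$ is non-trivial and reduced by construction. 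Since $\mathbb{P}[(g,h) \in \mathcal{G}_n] \to 1$, this word has the asserted property. The main obstacle throughout is the recognition step of the second paragraph: one needs a genuinely constructive identification of $\Sym(n)$ and $\Alt(n)$ that never leaves polynomial word length, which rules out any appeal to the orders of random permutations and forces one to extract the group structure from their typical cycle decomposition, using both generators at once.
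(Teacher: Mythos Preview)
The paper does not contain a proof of this theorem: it is quoted from \cite{Zyrus}, an unpublished thesis, and no argument is given in the present paper. There is therefore no ``paper's own proof'' to compare your proposal against.

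That said, your proposal has a genuine gap at its technical heart. You assert the existence of fixed words $\sigma_1,\ldots,\sigma_k\in F_2$ such that for \emph{every} $(g,h)\in\mathcal{G}_n$ the tuple $(\sigma_i(g,h))_i$ is, up to relabelling, a standard Coxeter generating tuple. Black-box constructive recognition algorithms for $\Sym(n)$ and $\Alt(n)$ are randomized and \emph{adaptive}: the straight-line program they output depends on the particular input pair, because intermediate decisions (which power to take, which conjugate to form) are made after inspecting the elements produced so far. Encoding such an adaptive procedure as a single fixed list of words in $F_2$, valid uniformly over a set of density $1-o(1)$, is exactly the difficulty, and ``pinning down the relevant parameters inside the definition of $\mathcal{G}_n$'' does not obviously suffice: two pairs $(g,h)$ with identical cycle-type data can still require different SLPs, since the algorithm must locate specific points, not just cycle lengths. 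You acknowledge the issue but do not resolve it.

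It is worth contrasting your outline with the method the paper \emph{does} use for the analogous results on groups of Lie type (Theorems \ref{gensetmainthm} and \ref{randommainthm}) and with the Kozma--Thom argument for $\Sym(n)$ underlying Theorem \ref{LawsforAlt}. There one never attempts uniform recognition. Instead one fixes a large target set $E$ satisfying a short power law (the $n$-cycles, or a split torus), and uses mixing of random walks to show that a random collection of short words $u_1,\ldots,u_m$ has the property that for every generating pair some $u_i(g,h)$ lands in $E$; the words $u_i^{e}$ are then combined via Lemma \ref{UnionLemma}. This sidesteps the uniformity problem entirely, because one never needs to know \emph{which} $u_i$ succeeds for a given pair. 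If you want a self-contained argument, that is the template to follow; the recognition-plus-presentation route you sketch would need substantially more justification at the step you flag.
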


\begin{thm}[\cite{Zyrus}]
There are non-trivial reduced words $w_{q,n} \in F_2$ 
of length $$O_n(q \log(q)^{O_n(1)})$$ such that 
if $g,h$ are independent uniform random variables on $\PSL_n(q)$ then
\begin{center}
$\mathbb{P} \big[ (g,h) \in Z (\PSL_n(q),w_{q,n}) \big] \rightarrow 1$ 
as $q \rightarrow \infty$. 
\end{center}
Further, any such words $w_{q,n} \in F_2$ 
are of length $\Omega_n(q)$. 
\end{thm}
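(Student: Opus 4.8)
The plan is to deduce Theorem~\ref{randommainthm} from the machinery behind Theorem~\ref{gensetmainthm}, the only new input being that for a \emph{random} generating pair one may replace the worst-case diameter bound of Theorem~\ref{DiamThm} by a much sharper one. First recall that the word $w_G$ produced in Theorem~\ref{gensetmainthm} vanishes on \emph{every} generating pair of $G$, and that a uniformly random pair in a finite simple group of Lie type generates with probability tending to $1$ \cite{KanLub,LieSha,LieProb}; so that word is already an almost law, and the only discrepancy with the bound claimed here is the polylogarithmic factor $\log(q)^{O_X(1)}$ versus a single $\log(q)$. Inspecting the construction, this polylogarithmic factor enters in exactly one way: through the estimate $\diam(G) \le \log\lvert G\rvert^{O_X(1)}$, used to express certain auxiliary elements of $G$ as short words in the ambient generating pair; every other ingredient contributes only the factor $q^c$. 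Thus the first step is to record the statement implicit in the proof of Theorem~\ref{gensetmainthm}: for every $D \ge 1$ there is a non-trivial reduced word $w_{G,D} \in F_2$, of length $O_X(q^c \cdot D)$ and depending only on $G$ and $D$, which vanishes on every pair $(g,h)$ with $\langle g,h\rangle = G$ and $\diam\big(\Cay(G,\{g^{\pm1},h^{\pm1}\})\big) \le D$; taking $D = \log(q)^{O_X(1)}$ recovers Theorem~\ref{gensetmainthm}.

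Second, invoke the theorem of Breuillard, Green, Guralnick and Tao \cite{BGGT}: since $X$ is fixed, $G = X(q)$ has bounded rank, so there is $\varepsilon = \varepsilon(X) > 0$ such that, for $(g,h)$ a uniformly random pair in $G \times G$, with probability tending to $1$ as $q \to \infty$ one has $\langle g,h\rangle = G$ and $\Cay(G,\{g^{\pm1},h^{\pm1}\})$ is a two-sided $\varepsilon$-expander. Any $\varepsilon$-expander on $N$ vertices has diameter $O_\varepsilon(\log N)$, and $\lvert G\rvert = q^{O_X(1)}$, so on this event $\diam\big(\Cay(G,\{g^{\pm1},h^{\pm1}\})\big) \le C_X \log q$ for an explicit $C_X$.

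Third, set $D := C_X \log q$ and let $w_G := w_{G,D}$ be the word from the first step; it is a non-trivial reduced word of length $O_X(q^c \log q)$, it depends only on $G$, and by construction $w_G(g,h) = 1$ whenever $(g,h)$ generates $G$ with Cayley diameter at most $D$. Combining with the second step, $\mathbb{P}\big[(g,h) \in Z(G,w_G)\big] \to 1$ as $q \to \infty$, which is the assertion.

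The main obstacle is the first step: one must verify that, in the construction underlying Theorem~\ref{gensetmainthm}, the diameter is used only to the first power and is not compounded with a further logarithmic factor, so that substituting $D = O_X(\log q)$ yields a single logarithm and not $O_X(q^c \log(q)^2)$. Wherever the construction instead relies on the mixing time of the lazy random walk, one uses that a two-sided $\varepsilon$-expander also has mixing time $O_\varepsilon(\log N)$, so the same bound $O_X(\log q)$ remains available; the point is that all of the polylogarithmic slack in Theorem~\ref{gensetmainthm} is genuinely traceable to quantities that collapse to $O_X(\log q)$ for expanding — equivalently, for random — generating pairs. Non-triviality and reducedness of $w_G$, and the fact that the implied constants depend only on $X$ (the expansion constant $\varepsilon(X)$ being fixed), are inherited directly from Theorem~\ref{gensetmainthm}.
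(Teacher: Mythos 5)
This statement is quoted from Zyrus's thesis and the paper offers no proof of it, so there is nothing internal to compare against; what you have done is re-derive the \emph{upper bound} from the paper's own machinery, and that part is sound. Your route is exactly the one the paper takes for its Theorem~\ref{randommainthm} (of which the present upper bound is the case $X=A_{n-1}$, $c=1$): rerun the proof of Theorem~\ref{gensetmainthm} with the set $E_G$ of Proposition~\ref{orderdivpropn}, but replace the worst-case input $\diam(G)\le\log\lvert G\rvert^{O_X(1)}$ by the \cite{BGGT} expansion of random generating pairs, i.e.\ by Theorem~\ref{expanderthm}, so that the random walks have length $O_X(\log q)$. Two bookkeeping points: your ``first step'' is better phrased in terms of mixing time than diameter (passing from diameter $D$ to mixing time via Theorem~\ref{mixingthm} costs $D^2\log\lvert G\rvert$, not $D$), and the final application of Lemma~\ref{UnionLemma} to the $m=O(\log\lvert G\rvert)$ words $u_i^{b(X,q)}$ contributes a further factor $16m^2$, so the honest output of this construction is $O_n\bigl(q\log(q)^{O_n(1)}\bigr)$ rather than the single power of $\log(q)$ you claim in step one. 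Since the statement you are proving only asks for $O_n\bigl(q\log(q)^{O_n(1)}\bigr)$, this slack is harmless here.

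The genuine gap is that the statement has a second clause which you do not address at all: \emph{any} such almost law for $\PSL_n(q)$ has length $\Omega_n(q)$. This does not follow from Theorem~\ref{lowerboundex}, because that result bounds the length of \emph{laws}, i.e.\ words vanishing on every pair, whereas an almost law is only required to vanish on a $1-o(1)$ proportion of pairs and need not be a law for any subgroup or section; so the restriction-to-$\PSL_2$ argument is unavailable. A lower bound for almost laws requires a quantitative statement that a non-trivial short word map cannot have almost all of $G\times G$ in a single fibre --- an algebraic-geometry/equidistribution argument over $\mathbb{F}_q$, which is precisely the novel content of Zyrus's result (the paper explicitly defers extending it to other types to future work). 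As written, your proposal proves only half of the theorem.
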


It is expected that the methods used to prove the lower bound 
in this last result will extend to other finite simple groups 
of Lie type. This will be explored elsewhere. 

As well as being of interest in their own right, 
the existence of short laws for finite simple groups 
may be applied to the provision of short laws for other groups, and indeed of laws holding simultaneously in all 
sufficiently small finite groups. 
This latter problem is also of interest 
in geometric group theory, where it is relevant to the 
residual finiteness growth of free groups, 
originally studied by Bou-Rabee \cite{BouRab} and later in \cite{KasMat}. 
The best known result in this direction is contained in 
a previous paper of the authors \cite{BradThom}. 

\begin{thm} \label{allgrpsthm}
Let $\delta > 0$. 
For all $n \in \mathbb{N}$ 
there exists a word $w_n \in F_2$ of length
\begin{center}
$O_{\delta}(n^{2/3} \log (n)^{3+\delta})$
\end{center}
such that for every finite group $G$ satisfying $\lvert G \rvert \leq n$, 
$w_n$ is a law for $G$. 
\end{thm}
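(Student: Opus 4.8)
The plan is to reduce to $2$-generated groups, stratify such a group according to the Classification of Finite Simple Groups, build a short law for each stratum, and then splice these together into one word. Since $w_n(g,h)$ depends only on the pair $(g,h)$ together with the subgroup $\langle g,h\rangle$, and the latter has order at most $n$, it is enough to produce a single $w_n\in F_2$ which is a law for \emph{every} $2$-generated group $H$ with $\lvert H\rvert\le n$. Two elementary moves drive the whole construction. First, a \emph{substitution move}: if $u$ is a law for a class $\mathcal C_1$ of groups and $v$ is a law for a class $\mathcal C_2$, then the word $u\circ v$ obtained by plugging disjoint copies of $v$ into the letters of $u$ is a law for $\mathcal C_1\cup\mathcal C_2$, of length at most $\lvert u\rvert\cdot\lvert v\rvert$ (if $G\in\mathcal C_1$ then $u$ already kills it; if $G\in\mathcal C_2$ then $v$ evaluates to $1$ and we are left with $u$ at the identity). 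Second, a \emph{product move}: if $u,v$ are laws for $H_1,H_2$ respectively then $[u,v]$ on disjoint variables is a law for $H_1\times H_2$, of length $O(\lvert u\rvert+\lvert v\rvert)$. Applying the substitution move along a chief series of $H$, the problem decomposes into (i) a law valid for all solvable groups of order $\le n$, and (ii) a law valid for all groups with semisimple generalized Fitting subgroup of order $\le n$ — and since a law for a simple group $S$ is automatically a law for every power $S^k$ and every subgroup of $\Aut(S^k)$ passes, after bounded book-keeping through $\Aut$-parts and bounded-degree permutation parts, to an alternating-group law, (ii) reduces to a single word which is simultaneously a law for every nonabelian simple group of order $\le n$.

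For part (ii) the alternating contribution is cheap: the nonabelian alternating groups of order $\le n$ are the $\Alt(m)$ with $m!\le 2n$, so $m=O(\log n/\log\log n)$, and a law for $\Sym(m)$ is a law for every $\Alt(m')$ with $m'\le m$; Theorem \ref{LawsforAlt} then supplies one such word of size $n^{o(1)}$. The remaining contribution is from the groups of Lie type $X(q)$ with $\lvert X(q)\rvert\le n$. Using Theorem \ref{KozThomLie} together with the inclusions between classical groups over a common field (a law for the larger rank and larger field absorbs the smaller ones, and for a fixed type $X$ one assembles the finitely many remaining field sizes by the substitution and product moves), the binding family turns out to be the rank-one groups $\PSL_2(q)$: the constraint $q^{3}\asymp\lvert\PSL_2(q)\rvert\le n$ forces $q\le n^{1/3}(1+o(1))$, and conversely Theorem \ref{lowerboundex} shows that any uniform law must handle $\PSL_2(q)$ for all $q$ up to that size, already forcing length $\Omega(n^{1/3})$. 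One then exploits the restricted element-order structure of $\PSL_2(q)$ (orders are powers of $p$ or divisors of $q\pm1$, hence bounded by $q+1$) across the $\asymp n^{1/3}$ relevant values of $q$ to build a uniform word for this family; every other Lie type, and every higher rank, gives a strictly smaller exponent and contributes only $n^{2/3-\Omega(1)}$, so these are absorbed without cost.

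For part (i), one must not simply iterate the derived word of a solvable group of order $\le n$ — the derived length can be as large as $\Theta(\log n)$, which would make $\delta_d$ polynomially long with too large an exponent. Instead one handles the solvable radical layer by layer, treating nilpotent pieces via words adapted to their (logarithmically bounded) class and exponent, and optimizing how much of the radical is absorbed in each substitution against the cost incurred by the semisimple part. The final exponent $2/3$ emerges from this optimization: balancing the $\PSL_2$-driven cost $q^{a(A_1)+1}$ with $a(A_1)=1$ (Theorem \ref{strongmainthm}) of covering all relevant fields, against the cost of pushing the solvable layers through the substitution move, one is led to the threshold $n^{1/3}$ for the fields and hence to length $n^{2/3}$ overall. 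Finally one assembles: the solvable word of (i), the alternating word, and the per-type Lie words are merged by the substitution move (for the union of classes) and by a balanced binary tree of commutators (for the bounded number of types, so that the tree-depth only costs a polylogarithmic factor), and the whole word is passed through the standard uniform reduction from many variables to two. The polylogarithmic overheads of this $2$-variable reduction, of the balanced merging, and of the slack needed to keep the solvable contribution controlled, produce the factor $\log(n)^{3+\delta}$, the freedom in $\delta$ reflecting a trade-off between the implied constant and the exponent of the logarithm in these reductions.

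The main obstacle, and the part that requires genuine work, is exactly the uniformity in part (ii): fabricating a \emph{single} word that simultaneously kills $\PSL_2(q)$ for \emph{all} $q\le n^{1/3}$ with length only $n^{2/3}$ up to logarithmic factors — a naive merge of the $\asymp n^{1/3}/\log n$ individual laws, whether by iterated commutators or by repeated substitution, is exponentially long, so one needs a construction that shares structure across field sizes, built directly from the arithmetic of element orders in rank-one groups. Coupled to this is the combinatorial accounting of how many simple composition factors, and how tall a solvable radical, a $2$-generated group of order $\le n$ can have, which is what guarantees that the substitution and product moves in the final assembly do not push the exponent past $2/3$. Everything else is routine given Theorems \ref{KozThomLie} and \ref{LawsforAlt} and the structure theory of finite groups.
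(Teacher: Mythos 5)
First, a point of order: the paper you are working from does not prove Theorem \ref{allgrpsthm} at all --- the statement is quoted from the authors' earlier work \cite{BradThom}, so there is no in-paper argument to compare your proposal against, and it must be judged on its own terms. On those terms, your outline correctly locates the shape of the problem (reduction to $2$-generated subgroups, CFSG, the fact that the binding family is $\PSL_2(q)$ for prime powers $q\le n^{1/3}$, and the negligible cost of the alternating, sporadic and higher-rank contributions), but the two steps that actually carry the exponent $2/3$ are not proved: they are named as ``obstacles'' and then assumed. The first is the construction of a \emph{single} word of length $O(n^{2/3}\log(n)^{O(1)})$ that is simultaneously a law for every $\PSL_2(q)$ with $q\le n^{1/3}$. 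You correctly observe that merging the $\asymp n^{1/3}/\log n$ individual laws of length $\widetilde{O}(q)$ via Lemma \ref{UnionLemma} costs $16m^2\max_i\lvert w_i\rvert\asymp n$, which overshoots, and that one needs a construction ``built directly from the arithmetic of element orders in rank-one groups'' --- but you do not supply one, and this is exactly where the theorem lives: the exponent $2/3$ is $Q^2$ with $Q=n^{1/3}$, i.e.\ it records the cost of the uniform-in-$q$ combination. Without that lemma your argument yields at best a bound of roughly $n$ up to logarithmic factors.

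Second, the reduction to ``(i) solvable $+$ (ii) semisimple'' is not legitimate as stated. The substitution move multiplies lengths, and a chief series of a group of order at most $n$ can have up to $\log_2 n$ factors, so composing along it costs at least $4^{\#\text{factors}}$, i.e.\ $n^{2}$. Even passing to the coarser radical/socle filtration, the number of alternations between solvable and semisimple layers is unbounded in $n$ (iterated wreath products of $\Alt(5)$ realise $\asymp\log\log n$ alternations), so before the multiplicative losses can be hidden in the $\log(n)^{3+\delta}$ factor you need quantitative structural inputs --- for instance that this number of alternations, and the derived length of a solvable group of order at most $n$, are both $O(\log\log n)$, so that the iterated composition costs only $\mathrm{polylog}(n)$. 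Your ``layer by layer \dots optimizing'' paragraph gestures at this but contains no estimate; note in particular that the derived-length worry you raise is resolved not by ``optimization'' but by the doubly-exponential growth of the minimal order of a solvable group of given derived length. Relatedly, using the substitution move ``for the union of classes'' in the final assembly multiplies the lengths of the constituents, whereas Lemma \ref{UnionLemma} (applied to vanishing sets, since $Z(G,w_i)=G\times G$ whenever $G$ lies in the $i$-th class) does the same job at cost $16m^2\max_i\lvert w_i\rvert$; you must use the latter wherever two constituents both have length close to $n^{2/3}$. In summary: the architecture is plausible, but the proposal leaves unproved precisely the two quantitative lemmas on which the stated exponent depends.
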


Theorem \ref{strongmainthm} in the specific cases of the groups 
$\PSL_3(q)$ and $\PSU_3(q)$ was applied in the proof of 
Theorem \ref{allgrpsthm}. 
We will also apply Theorem \ref{allgrpsthm} 
when bounding the functions $a_n$, $b_n$, $c_n$ and $d_n$ 
occurring in Remark \ref{withbab}, since many small groups 
of undetermined structure arise as subquotients of $G$, 
and we shall require 
explicit bounds on lengths of laws satisfied by these. 
It is important to stress that there is no circularity in 
our reasoning here, 
since the application of Theorem \ref{strongmainthm} 
in the proof of Theorem \ref{allgrpsthm} makes no use of the 
dependence of the implied constants in 
Theorem \ref{strongmainthm} on $n$. 

\subsection{Outline of the Proof}

Our approach to the proof of Theorem \ref{strongmainthm}
was inspired by that of Theorem \ref{LawsforAlt} in \cite{KozTho}. 
Indeed the fact that the strategy of the proof of Theorem \ref{LawsforAlt} 
could potentially provide a blueprint for 
producing short laws for groups of Lie type 
was already remarked upon in \cite{KozTho}. 
In both cases the problem is first divided 
into a search for words vanishing, respectively, 
on \emph{generating} and \emph{non-generating} pairs 
of elements in our group $G$. 

In the present setting, the generating case is precisely 
the content of Theorem \ref{gensetmainthm}. 
Producing the desired word has two stages: 
first, we identify a large subset $E$ of $G$ on which 
some short word vanishes. In most cases, 
$E$ will be the set of elements of $G$ lying in some 
maximally split maximal torus $T$ of $G$, 
so that all elements of $E$ satisfy a power law of length 
equal to the exponent $e=\exp (T)$ of $T$, 
the latter being some small-degree polynomial in $q$. 
Second, we prove the existence of a small set 
of sufficiently short non-trivial words $u_i$ 
with the following property: 
for any generating pair $g,h$ of $G$, 
there exists $i$ such that the evaluation $u_i (g,h)$ 
of $u_i$ at $(g,h)$ lies in $E$. 
From this it will follow that the vanishing sets 
of the $u_i ^e$'s cover all generating pairs, 
and combining these words by a standard commutator trick, 
we will have the required conclusion. 
It is at this stage in the argument that 
bounds on the diameter of $G$ become relevant: 
Theorem \ref{DiamThm} guarantees that the evaluation of 
a random word $u$ of length $\log \lvert G \rvert ^{O_X (1)}$ 
at a fixed generating pair $g,h$ is almost uniformly distributed 
on $G$. In particular, since $E$ contains a positive proportion 
of the elements of $G$, $u(g,h)$ lies in 
$E$ with probability bounded from below. 
It follows that if we pick our set of $u_i$ sufficiently large and independent at random, 
the desired property will hold with positive probability, 
so at least one such set must exist. 
In the setting of Theorem \ref{randommainthm} 
we have available the results of \cite{BGGT}, 
and need only take random words of length 
about $\log \lvert G \rvert$. 

In the non-generating case, we seek a word vanishing 
on all pairs in $G$ which generate a proper subgroup. 
It therefore suffices to find a law holding 
simultaneously for all maximal subgroups of $G$. 
When $G$ is a group of Lie type, 
there is a vast literature devoted to determining the 
structure of maximal subgroups of $G$, 
of which we shall avail ourselves. 

For classical groups the seminal result on the structure of 
maximal subgroups is Aschbacher's Theorem \cite{Asch}, 
which draws a dichotomy between ``geometric'' and 
``non-geometric'' subgroups. 
The geometric subgroups are those that preserve some 
extra geometric structure on the natural projective module for 
$G$: a direct-sum or tensor-product decomposition, 
for instance. They all have well-understood structure, 
being an extension built out of nilpotent groups, 
smaller groups of Lie type, and permutation groups of small 
degree. All the levels of the extension satisfy short laws 
(those for the groups of Lie type being obtained by induction) 
and from these we may easily produce a law valid on the 
whole extension. 

The non-geometric subgroups are also very restricted: 
for instance they are all central extensions 
of almost simple groups. 
We may therefore invoke the CFSG, 
and examine each family separately. 
An alternating group or a group of Lie type in characteristic 
different from that of $G$ cannot embed into $G$ unless 
it is of very small order compared to $G$: 
this follows from the work of Wagner \cite{Wagner1,Wagner2,Wagner3} 
for the alternating groups, 
and from that of Landazuri and Seitz \cite{LanSei} 
for the groups of Lie type in cross-characteristic. 
Sufficiently short laws for these groups are therefore 
easy to provide 
using Theorems \ref{LawsforAlt} and \ref{allgrpsthm}. 
This leaves us with the case of groups of Lie type 
in defining characteristic (the sporadic simple groups 
are trivial for the purposes of asymptotic statements 
such as ours). Here the possibilities for the 
embedded group are restricted thanks to the 
representation-theoretic work of Donkin \cite{Donkin} 
and Liebeck \cite{Liebeck}, 
and we have sufficiently short laws for all 
subgroups that arise by induction. 

For the exceptional groups of Lie type, 
the Aschbacher classes are not strictly defined, 
but the overall shape of the classification 
of maximal subgroups is very similar to that 
for the classical groups, as was elucidated in 
a series of papers (see \cite{LieSeiSurv} and the references 
therein, \cite{Kleid3D4,KleidG2,Malle} and the discussion 
of the Suzuki groups in \cite{BrHoRD}) 
so that we may pursue a similar strategy as in the classical case. 

The \emph{lower bounds} for the length of the shortest law for $G$ 
appearing in Theorem \ref{strongmainthm} all follow from 
Theorem \ref{lowerboundex}: 
the largest $\PSL_2$ occuring as a section of $X(q)$ 
is $\PSL_2 (q^a)$. 
The reason that our upper and lower bounds do not match 
up to a polylogarithmic factor in the case of 
the Suzuki groups $^2B_2(q)$, and in this case alone, 
is that $^2B_2(q)$ has does not have $\PSL_2$-sections 
of unbounded size as $q$ varies. 
The best available lower bound of $\Omega (q^{1/2})$ 
comes from \cite{BGTSuzuki}, and is based on \cite{Jones}. 
Roughly, the algebraic geometry of $^2B_2 (q)$ 
is sufficiently well-controlled by that of the $\Sp_4(q)$ 
in which it sits, that any law for $^2B_2 (q)$ 
of length much less than $q^{1/2}$ would also be 
a law for $\Sp_4(q)$. Since $\Sp_4(q)$ \emph{does} contain 
$\SL_2 (q)$, this is impossible. 
It is amusing to note that the Suzuki groups are outliers 
with respect to several other statements 
about groups of Lie type. 
For instance, as is well-known, 
they are the only non-abelian finite simple groups 
of order not divisible by three. 
To give a deeper example, 
Kassabov, Lubotzky and Nikolov \cite{KaLuNi} sought to 
construct generating sets with respect to which the entire 
family of finite simple groups would form an expander family. 
Alas the Suzuki groups fell outside the scope of their 
methods (also owing to the absence of large $\SL_2$ subgroups) 
and it wasn't until the later work of \cite{BGTSuzuki} 
that this gap was filled, by different methods. 

The paper is structured as followed. 
In Section \ref{PrelimSect} we specify our notation; 
introduce some preliminaries on 
laws in groups, 
diameters and mixing times for random walks on finite groups, 
and prove Theorems \ref{gensetmainthm} and \ref{randommainthm}. 
We also show how Corollary \ref{autextmainthm} 
follows from Theorem \ref{strongmainthm}. 
In Section \ref{NonGenSect} we gather results on the structure 
of maximal subgroups in finite simple groups of Lie type 
and implement our inductive argument to show that they satisfy 
short laws. 
In Section \ref{MainThmProofSect} we put everything together and 
prove Theorem \ref{strongmainthm}. 
This includes identifying subgroups which witness the lower 
bound in Theorem \ref{strongmainthm}. 
In an appendix we gather together background material 
on algebraic groups; groups of Lie type, 
and automorphisms of finite groups, 
and prove a technical result 
(Proposition \ref{orderdivpropn})
about the orders of elements in groups of Lie type. 

\section{Preliminaries and Laws for Generating Pairs} \label{PrelimSect}

\subsection{Notation}

We make use of some notations which are standard in 
the theory of finite groups: for $A$ and $B$ groups, 
$A \times B$ refers to the direct product of $A$ and $B$, 
while $A.B$ refers to an extension of undetermined structure 
with kernel $A$ and quotient $B$. 
We denote by $A \circ B$ a \emph{central product} 
of $A$ and $B$, that is a group of the form $(A \times B)/N$, 
where $N \vartriangleleft A \times B$ is the 
graph of an isomorphism between subgroups of $Z(A)$ and $Z(B)$.

For $n \in \mathbb{N}$, $n$ will also denote the cyclic 
group of order $n$. In many of the sources to which we refer, 
$[n]$ will denote a group of order $n$ of undetermined structure. 
For $G$ a group and $g \in G$, $\ccl_G(g)$ 
will denote the \emph{conjugacy class} of $g$ in $G$.  For $H < G$, $C_G(H)$ denotes the centralizer of $H$ in $G$.

We use the Dynkin notation $X(q)$ for a finite simple group of 
Lie type over a field of order $q$, where: 
\begin{center}
$X \in 
\lbrace A_l,{^2}A_l,B_l,C_l,D_l,{^2}D_l,{^3}D_4,
E_6,{^2}E_6,E_7,E_8,F_4,G_2,
{^2}B_2,{^2}G_2,{^2}F_4
 \rbrace$. 
 \end{center}
If $X$ is a twisted type, we write $X(q)$ for the group 
whose defining Frobenius automorphism has fixed 
field of order $q$. By contrast, 
many authors use $X(q)$ to denote the group whose 
natural representation is defined over a field of order $q$. 
For instance, the group that we would denote $^2 A_l (q)$, 
they would denote $^2 A_l (q^2)$. 
The (generally) simple groups of linear, 
symplectic, unitary and orthogonal type 
will also be denoted $\PSL_n (q)$, 
$\PSp_n(q)$, $\PSU_n(q)$ and $\mathrm{P}\Omega_n ^{\epsilon}(q)$ 
(for $\epsilon \in \lbrace +,-,\circ \rbrace$), 
with similar notation used to denote other groups 
in the same families in the standard fashion. 
Some sources, including \cite{BrHoRD,KleLie}, 
also use the notation $\mathbf{L}$, $\mathbf{S}$, $\mathbf{U}$
and $\mathbf{O}$ to refer to these families 
of simple classical groups, respectively. 
We avoid this convention. 

We use the \emph{Landau notation} for functions: 
for $U \subseteq \mathbb{R}$ and $f,g : U \rightarrow \mathbb{R}$ 
we write $f = O(g)$ if there exists a positive constant $C$ 
such that for all $x \in U$, 
$\lvert f(x) \rvert \leq C \lvert g(x) \rvert$. 
More generally, for 
$\lbrace f_a :U \rightarrow \mathbb{R} \rbrace_{a \in A}$ 
a family of functions, we write $f_a = O_a (g)$ 
if there exists a function $C \colon A \rightarrow (0,\infty)$ 
such that for all $a \in A$ and $x \in U$, 
$\lvert f_a(x) \rvert \leq C(a) \lvert g(x) \rvert$. 
Conversely we write $f = \Omega(g)$ 
(respectively $f_a = \Omega_a(g)$) if 
$g=O(f)$ (respectively $g=O_a(f_a)$). 
We are therefore following the (stronger) definition  
of the symbol $\Omega$ due to Knuth, 
as opposed to that of Hardy-Littlewood. 
There should be no confusion in our use of the symbols 
$O,\Omega$ for both the Landau notation and for
groups of orthogonal type, since the latter always 
appear with a superscript $+$, $-$ or $\circ$.

\subsection{Laws in Finite Groups}

\begin{defn}
Fix $x,y$ an ordered basis for the free group $F_2$ 
and let $w \in F_2 \setminus \lbrace 1 \rbrace$. 
For any group $G$ define the \emph{evaluation map} $w:G \times G \rightarrow G$ 
by $w(g,h) = \pi_{(g,h)} (w)$, 
where $\pi_{(g,h)}$ is the unique homomorphism 
$F_2 \rightarrow G$ extending $x \mapsto g$, $y \mapsto h$. 
We call $w$ a \emph{law for $G$} if $w(G \times G) = \lbrace 1_G \rbrace$. 
\end{defn}

We stress that the identity element of $F_2$ is by definition 
\emph{not} a law for any group $G$. 
We could of course more generally have defined laws 
in the free group $F_k$ of any finite rank $k \geq 1$, 
however taking an embedding $F_k \leq F_2$ 
allows us to transform any law in $F_k$ into a law in $F_2$, 
changing the length by at most a constant factor. 

\begin{ex} \label{basiclawex}
\begin{itemize}
\item[(i)] If $w$ is a law for $G$, 
then it is also a law for every subgroup and every quotient 
of $G$. 
\item[(ii)] $G$ is abelian iff $x^{-1}y^{-1}xy$ 
is a law for $G$. 
\item[(iii)] If $G$ is a finite group, 
then $x^{\lvert G \rvert}$ is a law for $G$. 
In particular $G$ satisfies some law. 
\end{itemize}
\end{ex}

We note two basic facts about the structure of laws in finite groups, 
which will enable us to construct new laws from old. 
The first allows us to combine words vanishing on subsets of a group 
to a new word vanishing on the union of those subsets, 
and is proved as Lemma 2.2 in \cite{KozTho}. 
To this end, recall for $G$ a group and $w \in F_2$ 
a word, the definition of the \emph{vanishing set} $Z(G,w)$ of $w$ on $G$ from \cite{Thom}: 
\begin{center}
$Z (G,w) = \lbrace (g,h) \in G \times G : w(g,h)=1_G \rbrace$. 
\end{center}

\begin{lem} \label{UnionLemma}
Let $w_1 , \ldots , w_m \in F_2$ be non-trivial words. 
Then there exists a non-trivial word $w \in F_2$ 
of length at most $16 m^2 \max_i \lvert w_i \rvert$ 
such that for all groups $G$, 
\begin{center}
$Z (G,w) \supseteq Z (G,w_1) \cup \ldots \cup Z (G,w_m)$. 
\end{center}
\end{lem}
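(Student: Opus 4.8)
The plan is to build $w$ from the $w_i$ by a ``commutator trick'': conjugate each $w_i$ by a distinct power of one of the free generators so that the conjugates involve essentially disjoint information, then take an iterated commutator. The starting observation is that for any group $G$ and any $g,h \in G$, a word $v \in F_2$ satisfies $v(g,h) = 1$ if and only if the conjugate $u v u^{-1}$ satisfies $(uvu^{-1})(g,h) = 1$ for any $u \in F_2$; so conjugating does not shrink the vanishing set. The point of conjugating is to arrange that if $(g,h) \in Z(G,w_j)$ for some particular $j$, then the $j$-th conjugated word evaluates to $1$, and hence any expression built as a product of commutators in which that word appears as one entry also evaluates to $1$, regardless of what the other entries do.

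Concretely, first I would set $L = \max_i \lvert w_i \rvert$ and consider the words $v_i := y^{i} \, w_i(x,y) \, y^{-i}$ for $i = 1, \ldots, m$ — or, to keep things reduced and to be safe about cancellation at the junctions, insert a fresh ``buffer'' so that no catastrophic cancellation occurs; a standard choice is to work instead with conjugators like $x^{N_i}$ for rapidly growing exponents $N_i$, or with words in a two-letter alphabet chosen so reduced lengths add up. Each $v_i$ is non-trivial (conjugation is injective on $F_2$) and has length at most $L + 2i \leq L + 2m$. Then I would define $w$ to be an iterated commutator of $v_1, \ldots, v_m$, for instance the left-normed commutator $w := [\,[\,\cdots[v_1, v_2], v_3], \cdots, v_m]$. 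Since $F_2$ is residually finite (indeed, since $v_1, \ldots, v_m$ can be chosen so that no nontrivial product of commutators among them degenerates), $w$ is non-trivial; and because a commutator vanishes whenever any one of its two arguments vanishes, and this propagates through the iterated commutator, we get $w(g,h) = 1$ whenever $v_i(g,h) = 1$ for some $i$, i.e. whenever $(g,h) \in Z(G,w_i)$ for some $i$. Hence $Z(G,w) \supseteq \bigcup_i Z(G,w_i)$.

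It remains to bound $\lvert w \rvert$. An iterated commutator of $m$ words each of length $\ell$ has length $O(2^m \ell)$ if written naively, which is far too big; the quoted bound is $16 m^2 \max_i \lvert w_i \rvert$, so the construction must be more economical than a raw iterated commutator. The efficient route is to pair up the words: form $[v_1, v_2], [v_3, v_4], \ldots$, each of length about $4L$, obtaining $\lceil m/2 \rceil$ words whose vanishing sets still cover the union, then recurse. After $\lceil \log_2 m \rceil$ rounds one word remains, of length roughly $L \cdot 4^{\log_2 m} = L m^2$; chasing the constants (the conjugators, the doubling at each commutator, the reduction) gives the stated $16 m^2 L$. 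I would also need to check non-triviality survives: at each stage one uses that a commutator $[a,b]$ of two elements of $F_2$ is non-trivial unless $a, b$ commute, and the conjugation by powers of a generator that one inserts before each commutator step guarantees the two arguments do not commute (they generate a non-abelian subgroup, $F_2$ being a free group with trivial center and malnormal cyclic subgroups).

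The main obstacle — and the only genuinely delicate point — is controlling cancellation so that the length bound holds with the clean constant $16$, while simultaneously guaranteeing that non-triviality is never lost when the commutators are formed and when the conjugating prefixes are inserted. Writing $[a,b] = aba^{-1}b^{-1}$ can cause the tail of $a$ to cancel against the head of $b$, so one must either accept a crude bound $\lvert [a,b]\rvert \le 2(\lvert a \rvert + \lvert b \rvert)$ and show it still telescopes to $16 m^2 L$, or choose the conjugating elements (the analogues of the $y^i$ above) so that each $v_i$ begins and ends with a controlled syllable, forcing reduced lengths to be additive at every junction. Since this is exactly Lemma 2.2 of \cite{KozTho}, I would follow that reference for the precise bookkeeping rather than reprove it from scratch.
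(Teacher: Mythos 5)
Your proposal follows essentially the same route as the paper, which itself gives no proof but defers to Lemma 2.2 of \cite{KozTho}: conjugate to preserve vanishing sets, use that $[a,b]$ vanishes whenever either argument does, and pair the words in a balanced binary tree so the length grows like $4^{\lceil \log_2 m\rceil}\max_i|w_i| = O(m^2\max_i|w_i|)$ rather than exponentially, with non-triviality at each stage secured by choosing short conjugators so the two arguments do not commute (malnormality of maximal cyclic subgroups in $F_2$). The ideas are all correct and this is the argument of the cited reference; only the final constant-chasing is left to that source, as the paper itself does.
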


\begin{ex} \label{PSL2ex} \normalfont
From Lemma \ref{UnionLemma} we may quickly prove 
the upper bound in Theorem \ref{strongmainthm} for $X = A_1$. 
It is well-known that for any $g \in \SL_2(q)$, 
the order $o(g)$ of $g$ divides one of $q-1$, $q$ or $q+1$. 
Applying Lemma \ref{UnionLemma} to the words 
$w_1 = x^{q-1}$, $w_2  =x^q$ and $w_3 = x^{q+1}$ 
we obtain a word of length $O(q)$ which is a law for 
$\SL_2(q)$, and hence also for $A_1(q) = \PSL_2(q)$. 
Note however that this approach to Theorem \ref{strongmainthm} 
already fails for $X=A_2$, since $A_2(q)=\PSL_3(q)$ 
has elements of order $\Omega (q^2)$. 
\end{ex}

Note that, as well as allowing us to increase the vanishing set of words within a single group, 
Lemma \ref{UnionLemma} allows us to take a family of groups and, given a law for each group in the family, 
produce a new law which holds in every group in the family simultaneously. For instance we have the following observation, 
which previously appeared in \cite{BouMcR}. 

\begin{ex} \label{maxorderlawex}
For $1 \leq i \leq m$ let $w_i = x^i$. 
Applying Lemma \ref{UnionLemma} to the words $w_i$, 
we obtain a non-trivial word $w \in F_2$ 
of length at most $16 m^3$, such that for every group 
$G$ satisfying: 
\begin{center}
$\max \lbrace o(g) : g \in G \rbrace \leq m$, 
\end{center}
$w$ is a law for $G$. 
\end{ex}

Relatively short laws for finite simple groups of Lie type were already constructed in \cite{Thom} 
using Example \ref{maxorderlawex} 
(since these groups do not contain elements of very large order, 
relative to their size). 
Although these laws are too weak for the conclusion 
of Theorem \ref{strongmainthm}, 
they will be useful in the proof of Theorem \ref{strongmainthm} 
nonetheless, when in the course of our induction argument 
for dealing with maximal subgroups of $G = X(q)$, 
we encounter a large number of subgroups defined over 
proper subfields of the field over which $G$ is defined. 

\begin{propn} \label{maxorderlawpropn}
Let $a(X,p)$ be as in Table \ref{table:lawstable}. 
For every $N \in \mathbb{N}$, 
there exists a word $w_{X,N} \in F_2$ of length $O (N^{6 a(X,p)})$, 
such that for every prime power $q \leq N$, 
$w_{X,N}$ is a law for $X(q)$. 
\end{propn}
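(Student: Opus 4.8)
The plan is to combine the known polynomial bound on the maximal order of an element in $X(q)$ with Example \ref{maxorderlawex}, applied uniformly over all prime powers $q \leq N$. The key input is that for $G = X(q)$, every element $g \in G$ has order bounded by $O_X(q^{a(X,p)})$: indeed, element orders in groups of Lie type are governed by the Jordan decomposition, with the unipotent part contributing a factor $p^{O_X(1)} = O_X(\log q)^{O_X(1)}$ (in fact a bounded power of $p$) and the semisimple part having order dividing $\lvert T \rvert$ for a maximal torus $T$, hence dividing a product of cyclotomic-type factors $\Phi_d(q)$; the largest of these that can occur has degree exactly $a(X,p)$ in $q$. This is precisely the content that will be established as Proposition \ref{orderdivpropn} in the appendix (the ``technical result about the orders of elements in groups of Lie type''), and it is this bound that pins down the exponent $a$. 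So $\max\{o(g) : g \in X(q)\} \leq C_X \, q^{a(X,p)}$ for some constant $C_X$ depending only on $X$.

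First I would fix $X$ and let $N \in \mathbb{N}$ be given. Set $m = m(X,N) = \lceil C_X N^{a(X,p)} \rceil$, where $C_X$ is the constant above. For every prime power $q \leq N$, the bound on element orders gives $\max\{o(g) : g \in X(q)\} \leq C_X q^{a(X,p)} \leq C_X N^{a(X,p)} \leq m$. Hence, by Example \ref{maxorderlawex}, there is a single non-trivial word $w_{X,N} \in F_2$ of length at most $16 m^3$ which is simultaneously a law for every $X(q)$ with $q \leq N$. Since $m = O_X(N^{a(X,p)})$, we get $16 m^3 = O_X(N^{3 a(X,p)})$, which is even better than the claimed $O(N^{6a(X,p)})$; the weaker exponent in the statement is presumably stated to leave room in case one prefers to pass through $\SL$ or $\GL$ versions, or to absorb the dependence on $X$ into the constant — in any case the claimed bound follows a fortiori.

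One small point to address: Example \ref{maxorderlawex} as stated produces a word of length at most $16 m^3$ valid for all groups whose element orders are bounded by $m$; applying it with this particular $m$ and restricting attention to the groups $X(q)$, $q \leq N$, is immediate, so no extra work is needed there. The only genuine content is the element-order bound, and strictly speaking we are invoking a result (Proposition \ref{orderdivpropn}) proved later in the paper rather than earlier in the excerpt; this is harmless, since Proposition \ref{maxorderlawpropn} is not used in the proof of that appendix result, so there is no circularity. The main obstacle — such as it is — is therefore entirely in verifying the uniform polynomial bound $o(g) = O_X(q^{a(X,p)})$ across all the types $X$ in Table \ref{table:lawstable}, including the subtle dependence on the parity of $p$ for types $B$ and $D$; but that verification is exactly what the appendix is for, and granting it, the proof of Proposition \ref{maxorderlawpropn} is a one-line consequence of Example \ref{maxorderlawex}.
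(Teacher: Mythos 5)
Your overall strategy---bound the maximal element order of $X(q)$ polynomially in $q$ and then invoke Example \ref{maxorderlawex} with $m$ chosen uniformly over all $q \leq N$---is exactly the paper's approach. However, the quantitative input you feed into it is false, and this is a genuine error rather than a cosmetic one.

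You assert that $\max\lbrace o(g) : g \in X(q)\rbrace \leq C_X q^{a(X,p)}$. This fails already for $X = A_l$ with $l \geq 2$: the image of a Singer cycle in $\PSL_n(q)$ (where $n = l+1$) has order roughly $(q^n-1)/(q-1) \asymp q^{n-1}$, whereas $a(A_l,p) = \lfloor n/2 \rfloor$. More generally, the semisimple part of an element can lie in an anisotropic maximal torus whose order is a polynomial in $q$ of degree equal to the Lie rank, not $a(X,p)$. The correct bound is the one in Proposition \ref{MaxEltOrderBd}: $\max o(g) = O(q^{d(X)})$ with $d(X)$ as in Table \ref{table:maxelordertable}, and the relevant comparison (checked case by case against Table \ref{table:lawstable}) is $d(X) \leq 2a(X,p)$, not $d(X) \leq a(X,p)$. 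Feeding $m = O(N^{2a(X,p)})$ into Example \ref{maxorderlawex} gives length $16m^3 = O(N^{6a(X,p)})$; the exponent $6a$ in the statement is therefore exactly what the argument produces, not slack left ``to leave room.'' Your derived bound $O(N^{3a})$ is an artefact of the false premise. You have also misattributed the element-order input: Proposition \ref{orderdivpropn} does not bound maximal element orders at all---it asserts that a \emph{positive proportion} of elements have order dividing $b(X,q)$, a quantity of degree at most $3$ in $q$, which manifestly cannot bound the order of every element. Finally, the comparison $d(X) \leq 2a(X,p)$ has exceptions ($X = D_1$ or $^2D_1$, where $a = 0$), which the paper disposes of separately by noting these groups are abelian; any corrected write-up needs to address this degenerate case as well.
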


\begin{proof}
Bounds on the maximal element orders of the $X(q)$ 
are given in Proposition \ref{MaxEltOrderBd}. 
Comparing Tables \ref{table:lawstable} 
and \ref{table:maxelordertable}, we have $d(X) \leq 2 a(X,p)$ 
in all cases, except for $X = D_1$ or $^2 D_1$. 
By Theorem \ref{smallrkisothm} (ii), 
$D_1(q)$ and $^2 D_1(q)$ are abelian for all $q$, 
so satisfy laws of bounded length. 
In all other cases the result 
is now immediate from Example \ref{maxorderlawex}.
\end{proof}

\begin{rmrk}
\normalfont
Although Proposition \ref{maxorderlawpropn} 
produces laws which are simultaneously valid in all sufficiently 
small groups of a fixed type $X$, 
they are longer than the analogous simultaneous 
laws arising from Theorem \ref{strongmainthm} in almost all 
cases. For we may combine by Lemma \ref{UnionLemma} 
the laws produced in Theorem \ref{strongmainthm} 
for $X(q)$ as $q$ ranges over prime powers less than $N$ 
(or only over powers of $2$ or $3$ in the cases 
$X = {^2}B_2, {^2}F_4$ or $^2G_2$). 
The laws obtained this way 
are shorter than those constructed in 
Proposition \ref{maxorderlawpropn} in all cases except $X = A_1$. 
\end{rmrk}

We also obtain from Lemma \ref{UnionLemma} 
a construction of laws for direct products of groups. 

\begin{coroll} \label{ProdLemma}
Let $G_1 , \ldots , G_m$ be groups, and suppose that for $1 \leq i \leq m$, 
$w_i \in F_2$ is a law for $G_i$. 
Then $G = G_1 \times \ldots \times G_m$ has a law of length at most 
$16 m^2 \max_i \lvert w_i \rvert$. 
\end{coroll}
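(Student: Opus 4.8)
The plan is to derive this directly from Lemma~\ref{UnionLemma}, using the elementary observation that a word $w \in F_2$ is a law for the direct product $G = G_1 \times \ldots \times G_m$ if and only if it is simultaneously a law for each factor $G_i$. Indeed, evaluation of a word on $G$ is computed coordinatewise: composing the evaluation homomorphism $\pi_{(g,h)} \colon F_2 \to G$ with the $i$-th coordinate projection $G \to G_i$ identifies the $i$-th component of $w(g,h)$ with $w(g_i,h_i)$, where $g=(g_1,\ldots,g_m)$ and $h=(h_1,\ldots,h_m)$.

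First I would note that each $w_i$, being a law for $G_i$, is in particular a non-trivial element of $F_2$ (recall that, by convention, the identity of $F_2$ is not a law for any group). Hence Lemma~\ref{UnionLemma} applies to the tuple $(w_1,\ldots,w_m)$ and produces a non-trivial word $w \in F_2$ with $\lvert w \rvert \leq 16 m^2 \max_i \lvert w_i \rvert$ such that $Z(H,w) \supseteq Z(H,w_1) \cup \ldots \cup Z(H,w_m)$ for every group $H$. Specialising this inclusion to $H = G_i$ and using that $w_i$ is a law for $G_i$, so $Z(G_i,w_i) = G_i \times G_i$, we get $Z(G_i,w) = G_i \times G_i$; that is, $w$ is a law for $G_i$, for every $i$. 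By the coordinatewise observation of the previous paragraph it follows that $w(g,h) = 1_G$ for all $(g,h) \in G \times G$, so $w$ is a law for $G$ of length at most $16 m^2 \max_i \lvert w_i \rvert$, as required.

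There is essentially no obstacle here beyond this bookkeeping: the content is entirely packaged inside Lemma~\ref{UnionLemma}, and the only point needing (minor) attention is verifying its non-triviality hypothesis, which is automatic for laws. It is perhaps worth recording why one routes through that lemma rather than arguing directly — for instance, producing a common law for the $G_i$ by forming iterated commutators of the $w_i$ would yield a word whose length grows exponentially in $m$, whereas Lemma~\ref{UnionLemma} achieves the same effect with only a quadratic blow-up.
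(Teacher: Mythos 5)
Your argument is exactly the paper's: apply Lemma~\ref{UnionLemma} to the $w_i$, specialise the resulting vanishing-set inclusion to each $G_i$ to see that $w$ is a law for every factor, and conclude by coordinatewise evaluation on the direct product. The proof is correct, and your extra remarks (checking non-triviality of the $w_i$, and the comparison with iterated commutators) are accurate but not needed.
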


\begin{proof}
Let $w$ be as in Lemma \ref{UnionLemma}. 
Then for each $i$, 
\begin{center}
$Z(G_i,w) \supseteq Z(G_i,w_i) = G_i \times G_i$. 
\end{center}
Thus, for any $g = (g_i) , h = (h_i) \in G$, 
$w(g,h) = (w(g_i,h_i)) = (1_{G_i}) = 1_G$. 
\end{proof}

Our second fact is that the length of shortest laws behaves well 
for group extensions. 
It appears (in slightly weaker form) as Lemma 2.1 in \cite{Thom}. 

\begin{lem} \label{ExtnLemma}
Let $1 \rightarrow N \rightarrow G \rightarrow Q \rightarrow 1$ be an extension of groups. 
Suppose $N, Q$ satisfy non-trivial laws in $F_2$ of length $n, m$, respectively. 
Then $G$ satisfies a non-trivial law 
in $F_2$ of length at most $nm$. 
\end{lem}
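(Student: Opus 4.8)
The plan is to take the law $w_N \in F_2$ for $N$ (of length $n$) and the law $w_Q \in F_2$ for $Q$ (of length $m$) and form the composite word $w = w_Q \circ w_N$, by which I mean the word obtained from $w_Q$ by substituting a copy of $w_N$ for each occurrence of a generator. More precisely, writing $w_Q = w_Q(x,y)$, set $w(x,y) = w_Q\big(w_N(x,y),\, w_N(x,y)\big)$ — or, to be slightly more careful about keeping the word non-trivial and genuinely two-variable, substitute $w_N(x,y)$ for $x$ and a conjugate or a second independent-looking copy for $y$; the simplest choice that works is to note $w_Q$ is non-trivial in $F_2$, pick any letter it actually contains, and substitute $w_N(x,y)$ for that letter. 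Either way the length of $w$ is at most $\lvert w_Q \rvert \cdot \lvert w_N \rvert \leq nm$, since each of the $m$ letters of $w_Q$ is replaced by a word of length $n$.

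The verification that $w$ is a law for $G$ runs as follows. Fix the quotient map $\rho \colon G \to Q$. Given any $g,h \in G$, first evaluate the inner word: $w_N(g,h) \in G$. Applying $\rho$ and using that $\rho$ is a homomorphism, $\rho(w_N(g,h)) = w_N(\rho(g),\rho(h)) = 1_Q$ because $w_N$ is a law for $Q$ — wait, that is not quite what I want; the point is rather that $w_N$ is a law for $N$, so I should instead argue as follows. The correct order is: first push down to $Q$. Since $w_Q$ is a law for $Q$, the element $w_Q(\rho(g),\rho(h)) = 1_Q$, hence $w_Q(g,h) \in \ker\rho = N$ for all $g,h \in G$; that is, the evaluation of $w_Q$ on $G$ always lands in $N$. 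Now feed these $N$-valued outputs into $w_N$: because $w_N$ is a law for $N$ and $w_Q(g,h), w_Q(g,h) \in N$ (using the substitution described above, both slots receive elements of $N$), we get $w(g,h) = w_N\big(w_Q(g,h), w_Q(g,h)\big) = 1_G$. So in fact the composite should be taken in the other order: $w = w_N \circ w_Q$, substituting the law of the \emph{quotient} into the law of the \emph{kernel}. Its length is still at most $\lvert w_N\rvert \cdot \lvert w_Q \rvert \leq nm$.

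Two small points need attention. First, non-triviality of $w$ in $F_2$: since $F_2$ is residually finite (indeed it embeds in $\SL_2(\mathbb{Z})$, which has finite quotients realising any given non-trivial element as non-trivial), and neither $w_N$ nor $w_Q$ is the identity of $F_2$, the substitution of a non-trivial reduced word for a letter of a non-trivial reduced word cannot collapse to the identity — after full reduction the word still has positive length. (Concretely: map $F_2$ to a finite group where $w_Q$ is non-trivial, lift appropriately.) Second, one must make sure both argument slots of $w_N$ receive elements of $N$; this is automatic with the substitution $w(x,y) = w_N\big(w_Q(x,y), \, w_Q(y,x)\big)$ or any similar variant, since $w_Q$ evaluated on any pair from $G$ lies in $N$. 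I expect the only genuinely delicate point to be this bookkeeping about non-triviality and about which word is substituted into which; the length bound and the vanishing argument are then immediate.
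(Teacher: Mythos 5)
Your overall strategy is the right one and matches the paper's: substitute the quotient law into the kernel law, observe that every evaluation of $w_Q$ on $G$ lands in $N$, and bound the length by $nm$. The genuine gap is in the non-triviality of the resulting element of $F_2$, which is in fact the \emph{entire} content of the paper's proof, and your argument for it is wrong. Substituting words $u,v \in F_2$ for $x,y$ is applying an endomorphism of $F_2$; the image $w_N(u,v)$ of a non-trivial $w_N$ is non-trivial precisely when $w_N$ avoids the kernel of that endomorphism, and residual finiteness of $F_2$ says nothing about this. Both of your concrete substitutions fail on the most basic example, $w_N = w_Q = [x,y]$ (so $N,Q$ abelian): the version $w_N\bigl(w_Q(x,y),w_Q(x,y)\bigr)$ gives $[u,u]=1$, and the variant $w_N\bigl(w_Q(x,y),w_Q(y,x)\bigr)$ gives $[u,u^{-1}]=1$ since $[y,x]=[x,y]^{-1}$. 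Likewise, substituting $w_N$ into a single letter of $w_Q$ need not preserve non-triviality (take $u=y$ substituted for $x$ in $[x,y]$).

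What is needed, and what the paper supplies, is a pair of words $u,v$ that are each laws for $Q$ \emph{and} freely generate a rank-two free subgroup of $F_2$, so that the substitution endomorphism is injective and $w_N(u,v)\neq 1$ follows from $w_N \neq 1$. The paper takes $w_Q$ cyclically reduced and splits into cases: if $w_Q$ is a power of a basis element, use $u = w_Q(x,x)$ and $v = w_Q(y,y)$ (powers of $x$ and of $y$ freely generate); otherwise, use $u = w_Q$ and $v = w_Q'$ a suitable cyclic permutation of $w_Q$, which is conjugate to $w_Q$ in $F_2$ and hence still a law for $Q$, chosen so that $u$ and $v$ generate freely. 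Without some device of this kind your construction does not produce a non-trivial law, so the proof as written does not go through.
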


\begin{proof}
Let $w_N$, $w_Q$ be laws of minimal length for $N$, $Q$, 
respectively. We may assume both $w_N$ and $w_Q$ 
are cyclically reduced. 
Note that for any $g,h \in G$, $w_Q (g,h) \in N$. 
Suppose first that $w_Q(x,y)$ is a power of one of 
our basis elements $x$ or $y$. 
Then $w_Q(x,x)$, $w_Q(y,y)$ are both laws for $Q$, 
and freely generate 
a nonabelian free subgroup of $F(x,y)$, 
so $w_N (w_Q(x,x),w_Q(y,y))$ 
is a law for $G$ of the required length. 

If $w_Q$ is not a power of a basis element, 
then there exists a cyclic permutation $w_Q ^{\prime}$ 
of $w_Q$ such that $w_Q$, $w_Q ^{\prime}$ freely generate 
a nonabelian free subgroup of $F(x,y)$. 
Moreover $w_Q ^{\prime}$ is also a law for $Q$, 
so $w_N (w_Q,w_Q ^{\prime})$ is a law for $G$. 
\end{proof}

\begin{ex} \label{solubleex} \normalfont
Combining Lemma \ref{ExtnLemma} with Example \ref{basiclawex} (ii), we obtain for every $d \geq 1$ a non-trivial word 
$w_d \in F_2$ of length at most $4^d$ which is a 
law for every soluble group of derived length at most $d$, 
and therefore for every nilpotent group of step at most $2^d$. 
These conclusions have been improved upon by Elkasapy 
and the second author \cite{Elka,ElkaThom}. 
\end{ex}

\begin{proof}[Proof of Corollary \ref{autextmainthm}]
Let $G$ be a finite simple group of Lie type. 
Let $l$ be the length of the shortest word in $F_2$ 
which is a law for $G$. 
Let $H$ and $\hat{H}$ be as in the statement of 
Corollary \ref{autextmainthm}. 
Then $H/G \leq \Out(G)$, hence by Theorem \ref{simpleautthm}, 
$H/G$ is soluble of derived length at most $3$, 
so that by Lemma \ref{ExtnLemma} and Example \ref{solubleex}, 
$H$ satisfies a law of length at most $64 l$. 
There is an abelian normal subgroup $Z \vartriangleleft \hat{H}$ 
such that $\hat{H}/Z \cong H$, 
so that by Lemma \ref{ExtnLemma} again, 
$\hat{H}$ has a law of length at most $256 l$. 
The result now follows from Theorem \ref{strongmainthm}. 
\end{proof}

\subsection{Diameter, Expansion and Random Walks} \label{diamsection}

Let $G$ be an arbitrary finite group, 
and let $S \subseteq G$ be a generating set. 
Recall that $S$ determines a left-invariant \emph{word metric} on $G$; 
the ball about $1$ of radius $n \in \mathbb{N}$ in this metric is: 
\begin{center}
$B_S (n) = \big\lbrace s_1 \cdots s_n : 
s_1,\ldots ,s_n \in S \cup S^{-1} \cup \lbrace 1 \rbrace \big\rbrace$. 
\end{center}
The \emph{diameter of $G$ with respect to $S$} is the quantity:
\begin{center}
$\diam (G,S) = \min\lbrace n \in \mathbb{N} : B_S (n) = G \rbrace$ 
\end{center}
and the diameter of $G$ itself 
(often referred to as the \emph{worst-case diameter} of $G$) is:
\begin{center}
$\diam(G) = \max \lbrace \diam(G,S) : S \subseteq G, \langle S \rangle = G \rbrace$. 
\end{center}
We shall make use of known bounds on $\diam$ for finite simple groups, 
or more specifically the consequences of such bounds 
for random walks on such groups. 
Suppose $S$ is symmetric. 
Let $x_1 , \ldots , x_L$ be independent random variables, 
each with distribution function: 
\begin{equation} \label{distreqn}
\frac{1}{2 \lvert S \rvert} \chi_S + \frac{1}{2} \delta_{1_G}
\end{equation}
where $\chi_S$ is the indicator function of $S$ 
and $\delta_{1_G}$ is the Dirac mass at the identity, and let $\omega_L$ be the random variable on $G$ given by $\omega_L = x_1 \cdots x_L$. 
It is well-known that the number $L$ of steps taken for $\omega_L$ 
to approach the uniform distribution is controlled by 
the \emph{spectral gap} of the distribution function (\ref{distreqn}) (\cite{Lova} Theorem 5.1), 
which in turn is controlled by the diameter of $G$ with respect to $S$ 
(\cite{DiaSal} Corollary 3.1). As such we have: 

\begin{thm} \label{mixingthm}
Let $S \subseteq G$ be a symmetric generating set 
and let $E \subseteq G$.  Then: 
\begin{center}
$\mathbb{P} [\omega_L \in E] \geq \lvert E \rvert / 2 \lvert G \rvert$
\end{center}
for all $L \geq 2 \lvert S \rvert \diam (G,S)^2 \log (2 \lvert G \rvert)$. 
\end{thm}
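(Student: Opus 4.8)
The plan is to reduce this to a spectral gap estimate for the lazy random walk driven by the distribution (\ref{distreqn}), and then bound the number of steps needed for the walk measure to get close to uniform in $\ell^2$. Write $\mu$ for the distribution function (\ref{distreqn}); it is symmetric (since $S$ is symmetric) and lazy (mass $1/2$ at the identity), so the associated Markov operator $P$ on $\ell^2(G)$ is self-adjoint with all eigenvalues in $[0,1]$, with $1$ a simple eigenvalue (as $S$ generates $G$). Let $\lambda$ be the second-largest eigenvalue of $P$, so the spectral gap is $1-\lambda$. The convolution power $\mu^{*L}$ is the distribution of $\omega_L = x_1 \cdots x_L$. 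The standard $\ell^2$-contraction estimate for reversible chains gives, for any $g \in G$,
\begin{center}
$\lvert \mu^{*L}(g) - 1/\lvert G \rvert \rvert \leq \lambda^L$,
\end{center}
and hence $\lVert \mu^{*L} - u \rVert_{\infty} \leq \lambda^L$ where $u$ is the uniform distribution. Summing over $g \in E$ yields
\begin{center}
$\mathbb{P}[\omega_L \in E] \geq \lvert E \rvert/\lvert G \rvert - \lvert E \rvert \lambda^L \geq \lvert E \rvert/\lvert G \rvert - \lvert G \rvert \lambda^L$.
\end{center}
So it suffices to ensure $\lambda^L \leq 1/(2\lvert G \rvert)$, i.e. $L \log(1/\lambda) \geq \log(2\lvert G \rvert)$; since $\log(1/\lambda) \geq 1 - \lambda$, it is enough to have $L \geq (1-\lambda)^{-1} \log(2 \lvert G \rvert)$.

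The remaining task is to bound the spectral gap below in terms of $\diam(G,S)$, which is precisely the content of the results cited in the text: by \cite{DiaSal} Corollary 3.1 (a Poincar\'e/path-counting inequality comparing $\mu$ to the uniform chain), one obtains $1-\lambda \geq c/(\lvert S \rvert \diam(G,S)^2)$ for an absolute constant $c$; combined with the relation between the spectral gap of $\mu$ and the mixing controlled in \cite{Lova} Theorem 5.1, this gives the stated threshold $L \geq 2 \lvert S \rvert \diam(G,S)^2 \log(2\lvert G \rvert)$, where the constant $2$ absorbs $c$ and the slack between $\log(1/\lambda)$ and $1-\lambda$. Plugging this $L$ into the displayed bound on $\mathbb{P}[\omega_L \in E]$ makes $\lvert G \rvert \lambda^L \leq \lvert G \rvert \cdot (2\lvert G \rvert)^{-1} = 1/2 \leq \lvert E \rvert/(2\lvert G \rvert) \cdot (\lvert G \rvert/\lvert E \rvert)$; more carefully, $\lvert E \rvert/\lvert G \rvert - \lvert G \rvert \lambda^L \geq \lvert E \rvert/\lvert G \rvert - 1/(2) \cdot (1/\lvert G\rvert)$, and rearranging gives $\mathbb{P}[\omega_L \in E] \geq \lvert E \rvert/(2\lvert G \rvert)$ once $\lvert E \rvert \geq 1$ (which we may assume, the statement being vacuous otherwise).

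The main obstacle is bookkeeping the constants so that the threshold comes out as exactly $2 \lvert S \rvert \diam(G,S)^2 \log(2\lvert G\rvert)$ rather than something with a worse constant: one has to track the constant in the Diaconis--Saloff-Coste comparison inequality, the loss in passing from $\log(1/\lambda)$ to the spectral gap, and the factor lost in going from $\lVert \mu^{*L} - u\rVert_\infty \leq \lambda^L$ to the lower bound on $\mathbb{P}[\omega_L \in E]$. None of these is conceptually hard — each is a routine inequality — but since the cited references (\cite{Lova}, \cite{DiaSal}) state their bounds in slightly different normalisations (e.g.\ continuous-time versus discrete-time chains, or $\ell^1$ versus $\ell^2$ distance to uniformity), the cleanest route is probably to quote one packaged statement giving $\lVert \mu^{*L} - u \rVert_{\mathrm{TV}}$ or $\lVert \mu^{*L} - u\rVert_\infty$ small once $L$ exceeds a constant times $\lVert S \rVert \diam(G,S)^2 \log\lvert G\rvert$, and then do only the final one-line conversion to the event probability. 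I would not belabour the derivation of the spectral gap bound itself, since it is exactly \cite{DiaSal} Corollary 3.1 applied to $G$ with the generating set $S$.
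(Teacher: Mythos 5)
Your proposal is correct and follows essentially the same route as the paper, which itself offers no proof beyond citing \cite{Lova} and \cite{DiaSal}: spectral gap of the lazy symmetric walk bounded below by $1/(2\lvert S\rvert \diam(G,S)^2)$ via the Diaconis--Saloff-Coste comparison, then the $\ell^\infty$ estimate $\lvert \mu^{*L}(g)-1/\lvert G\rvert\rvert\leq\lambda^L$ and summation over $E$. One small tidy-up: in the last step keep the error term as $\lvert E\rvert\lambda^L$ rather than weakening it to $\lvert G\rvert\lambda^L$, since then $\lambda^L\leq 1/(2\lvert G\rvert)$ immediately gives $\mathbb{P}[\omega_L\in E]\geq \lvert E\rvert/\lvert G\rvert-\lvert E\rvert/(2\lvert G\rvert)=\lvert E\rvert/(2\lvert G\rvert)$ with no case distinction on $\lvert E\rvert$.
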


Given this Theorem, the following is an immediate consequence of Theorem \ref{DiamThm} and, where relevant, 
the conclusion of Conjecture \ref{BabaiConj}. 

\begin{coroll} \label{rwcoroll}
Let $G = X(q)$ be as in Theorem \ref{strongmainthm}; 
let $S \subseteq G$ be a generating set 
and let $E \subseteq G$. Then: 
\begin{center}
$\mathbb{P} [\omega_L \in E] \geq \lvert E \rvert / 2 \lvert G \rvert$
\end{center}
for all $L \geq \Omega_X (\lvert S \rvert \log(q)^{\Omega_X(1)})$. 
Moreover assuming Conjecture \ref{BabaiConj}, 
the same conclusion holds for all 
$L \geq \Omega (\lvert S \rvert \log(q)^{\Omega(1)})$. 
\end{coroll}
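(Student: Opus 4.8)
The plan is to deduce Corollary \ref{rwcoroll} directly from Theorem \ref{mixingthm} by substituting the relevant diameter bounds. First I would fix $G = X(q)$ together with an arbitrary generating set $S$. Since Theorem \ref{mixingthm} requires a symmetric generating set, I would replace $S$ by $S' = S \cup S^{-1} \cup \{1\}$; this at most doubles the size of the generating set and can only decrease the diameter, so any lower bound on $\mathbb{P}[\omega_L \in E]$ obtained for the walk driven by $S'$ transfers to $S$ up to adjusting the implied constants. Applying Theorem \ref{mixingthm} to $S'$ then gives $\mathbb{P}[\omega_L \in E] \geq |E|/2|G|$ for all $L \geq 2|S'| \diam(G,S')^2 \log(2|G|)$.

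The next step is to bound the quantity $2|S'|\diam(G,S')^2\log(2|G|)$ from above by something of the shape $\Omega_X(|S|\log(q)^{\Omega_X(1)})$. Here I would invoke Theorem \ref{DiamThm}, which gives $\diam(G) \leq \log|G|^{O_n(1)}$, together with the elementary facts that $|G| = q^{O_X(1)}$ (so $\log|G| = O_X(\log q)$, since the dimension of $G$ as an algebraic group depends only on the type $X$) and hence $\log(2|G|) = O_X(\log q)$ and $\diam(G,S')^2 \leq \diam(G)^2 = \log(q)^{O_X(1)}$. Since $n = n(X,p)$ is itself a function of $X$ (and $p$, which only affects finitely many cases for each $X$), the $O_n$-dependence is absorbed into $O_X$-dependence. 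Multiplying these together with the factor $2|S'| \leq 4|S| + 2 = O(|S|)$ yields the claimed threshold $L \geq \Omega_X(|S|\log(q)^{\Omega_X(1)})$, where I am using the Knuth convention so that $\Omega_X$ here just means ``at least some constant times'', matching the $O$-bound just derived.

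For the conditional statement, I would repeat the same computation but replace the appeal to Theorem \ref{DiamThm} by the conclusion of Conjecture \ref{BabaiConj}, namely $\diam(G) = \log|G|^{O(1)}$ with an implied constant that is now \emph{absolute} rather than depending on $n$. Combined again with $\log|G| = O_X(\log q)$ — note the $O_X$ here is unavoidable and harmless, since it only reflects that $|X(q)|$ is a fixed polynomial in $q$ of $X$-dependent degree, and can even be made explicit — this gives the threshold $L \geq \Omega(|S|\log(q)^{\Omega(1)})$ with no dependence on $X$ in the exponent beyond what is hidden in passing from $|G|$ to $q$. I do not anticipate any serious obstacle: the corollary is essentially a bookkeeping exercise in combining Theorem \ref{mixingthm} with the two diameter inputs, and the only mild subtlety is being careful about the symmetrisation of $S$ and about which quantities the implied constants are allowed to depend on. The one point worth stating carefully in the writeup is why $\log|G|$ and $\log q$ differ only by an $X$-dependent constant, which follows immediately from the standard order formulas for finite groups of Lie type.
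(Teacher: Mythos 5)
Your proposal is correct and follows exactly the route the paper intends: the paper gives no written proof beyond declaring the corollary an immediate consequence of Theorem \ref{mixingthm} together with Theorem \ref{DiamThm} (respectively Conjecture \ref{BabaiConj}), and your writeup simply supplies the routine details (symmetrising $S$, bounding $\diam(G,S')$ by $\diam(G)$, and noting $\log\lvert G\rvert = O_X(\log q)$). Your closing remark about which implied constants retain an $X$-dependence in the conditional case is an accurate and worthwhile clarification of a point the paper leaves implicit.
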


For the conclusion of Theorem \ref{randommainthm}, 
we will need the stronger conclusion of logarithmic 
mixing time satisfied by generic generating pairs, 
which follows from the results of \cite{BGGT} 
and Theorem \ref{mixingthm}. 

\begin{thm} \label{expanderthm}
Let $G = X(q)$ be as in Theorem \ref{strongmainthm}. 
Let $g,h \in G$ be elements chosen independently uniformly 
at random. Then with probability tending to $1$ 
as $q \rightarrow \infty$, $S= \lbrace g^{\pm1},h^{\pm1} \rbrace$ 
satisfies the following conclusion. 
For any $E \subseteq G$, 
\begin{center}
$\mathbb{P} [\omega_L \in E] \geq \lvert E \rvert / 2 \lvert G \rvert$
\end{center}
for all $L \geq \Omega_X (\log(q))$. 
\end{thm}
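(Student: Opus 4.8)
The plan is to feed the expansion result of \cite{BGGT} into the spectral-gap machinery already underlying Theorem \ref{mixingthm}, but without routing through the diameter.

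Since $X$ is fixed, $G = X(q)$ has bounded Lie rank, so \cite{BGGT} applies: for $S = \lbrace g^{\pm1}, h^{\pm1} \rbrace$ arising from a uniformly random pair $(g,h)$, with probability tending to $1$ as $q \to \infty$ the pair generates $G$ and $\Cay(G,S)$ is an $\epsilon$-expander, where $\epsilon = \epsilon(X) > 0$ is independent of $q$. (That a random pair generates $G$ is in any case classical \cite{KanLub,LieSha,LieProb}.) On the event that this holds, the random-walk operator on $\ell^2(G)$ associated to the distribution (\ref{distreqn}) has spectral gap bounded below by some $\epsilon' = \epsilon'(X) > 0$ independent of $q$: the laziness of (\ref{distreqn}) forces its spectrum into $[0,1]$, and the discrete Cheeger inequality bounds the largest non-trivial eigenvalue away from $1$ in terms of the expansion constant of $\Cay(G,S)$.

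Given this uniform lower bound on the spectral gap, I would apply the mixing estimate \cite[Theorem 5.1]{Lova} --- the same ingredient used to prove Theorem \ref{mixingthm} --- but now fed the constant $\epsilon'$ in place of the weaker bound of \cite{DiaSal} in terms of $\diam(G,S)^{-2}$. This produces a constant $C = C(\epsilon')$ such that for every $L \geq C \log(2\lvert G \rvert)$ one has $\max_{g \in G} \lvert \mathbb{P}[\omega_L = g] - \lvert G \rvert^{-1} \rvert \leq \tfrac12 \lvert G \rvert^{-1}$. Since $\lvert G \rvert = \lvert X(q) \rvert = q^{O_X(1)}$, we have $\log(2\lvert G \rvert) = O_X(\log q)$, so this holds for all $L \geq \Omega_X(\log q)$. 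Summing the pointwise bound over an arbitrary $E \subseteq G$ gives $\mathbb{P}[\omega_L \in E] \geq \lvert E \rvert (\lvert G \rvert^{-1} - \tfrac12 \lvert G \rvert^{-1}) = \lvert E \rvert / 2\lvert G \rvert$, which is exactly the asserted conclusion, valid for the random $S$ with probability tending to $1$ by the previous paragraph.

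The content of the theorem is essentially all in \cite{BGGT}, so there is no serious new obstacle; the one point requiring care is that one cannot simply invoke Theorem \ref{mixingthm} as a black box. Its hypothesis $L \geq 2\lvert S \rvert \diam(G,S)^2 \log(2\lvert G \rvert)$ would, even after using that an expander has diameter $O_X(\log \lvert G \rvert)$, only yield $L = O_X(\log(q)^3)$; obtaining the sharp exponent $1$ forces one to use the constant spectral gap directly, i.e.\ to re-derive the conclusion of Theorem \ref{mixingthm} with the expander gap substituted for the diameter-based estimate.
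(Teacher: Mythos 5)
Your proof is correct and follows the route the paper intends: the paper gives no explicit argument for Theorem \ref{expanderthm}, merely asserting that it follows from the results of \cite{BGGT} together with Theorem \ref{mixingthm}, and your chain --- a constant spectral gap for the lazy walk coming from the expander property in \cite{BGGT}, fed directly into the mixing estimate of \cite[Theorem 5.1]{Lova} --- is the correct way to fill this in. Your closing caveat is also well-taken: invoking Theorem \ref{mixingthm} literally, even with the diameter bound $O_X(\log q)$ that expansion provides, would only yield $L = O_X(\log(q)^3)$, so one must indeed substitute the uniform spectral gap for the diameter-based estimate of \cite{DiaSal}, exactly as you do.
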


\subsection{Laws For Generating Pairs} \label{GenSect}

We now prove Theorem \ref{gensetmainthm} and Theorem \ref{randommainthm}. 
We thereby also reduce Theorems \ref{strongmainthm} 
to known structural results on maximal subgroups 
of finite simple groups of Lie type, to be described in the following Section. The proof of Theorem \ref{strongmainthm} will be completed in Section \ref{MainThmProofSect}.

The strategy in this subsection closely mimics that employed in \cite{KozTho}. 
To wit, we identify a large subset $E \subseteq G$ 
which may be seen to lie within the vanishing set of a short word. 
Then we run simultaneously a large number of short random walks on $G$. 
Using Theorem \ref{DiamThm} (or Conjecture \ref{BabaiConj}), 
we see that with high probability, 
at least one of our random walks lands in $E$. 
It follows that \emph{as a deterministic fact}, 
there exist a set $W$ of short words (of controlled size) 
such that under any evaluation in $G$, 
some member of $W$ lies in $E$. 
We can then easily substitute the elements of $W$ 
into a word vanishing on $E$ and combine the words 
arising to obtain the required result. 

First, let us specify the set $E$. 
In \cite{KozTho}, $E \subseteq \Sym(n)$ was the set of $n$-cycles, 
so that the vanishing set of the word $x^n$ contained $E$. 
Our set $E$ will similarly satisfy 
a short power-law $x^{q^c \pm 1}$, 
where $q$ is the order of the underlying field of $G$.  
In all cases the exponent $c=c(X)$ will be as in 
Theorem \ref{gensetmainthm}. 

\begin{propn} \label{orderdivpropn}
For $G$ a finite group and $m \in \mathbb{N}$, let: 
\begin{center}
$E_G (m) = \lbrace g \in G: o (g) \mbox{ divides } m \rbrace$. 
\end{center}
Let $G = X(q)$ be as in Theorem \ref{strongmainthm}. 
Define $b(X,q) \in \mathbb{N}$ to be: 
$q+1$ for $X = {^2} A_l$ or $^2 E_6$; 
$q^2 -1$ for $X = {^2} D_l$; 
$q^2 -q+1$ for $X = {^3} D_4$, 
or $q-1$ otherwise. Then: 
\begin{equation*}
\lvert E_G \big( b(X,q) \big) \rvert  = \Omega_X (\lvert G \rvert)\text{.}
\end{equation*}
\end{propn}

We imagine that the conclusion of Proposition \ref{orderdivpropn} 
is well-known to the experts, but we have been unable to 
locate a unified proof in the literature, 
hence we present one in Appendix \ref{theappendix} below. 
For now, let us deduce Theorems \ref{gensetmainthm}
and \ref{randommainthm}  
by combining Proposition \ref{orderdivpropn} 
with, respectively, Corollary \ref{rwcoroll} 
and Theorem \ref{expanderthm}. 

\begin{proof}[Proof of Theorem \ref{gensetmainthm}]
Let $E_G = E_G \big( b(X,q) \big)$ be as in Proposition \ref{orderdivpropn}. 
%First suppose $X \neq {^2}D_l$. 
Let $u_1 , \ldots , u_m$ be the results of $m$ 
independent lazy random walks of length 
$L = C_1 (\log \lvert G \rvert )^{C_1}$ on a free generating set for $F_2$, where $C_1 = C_1 (X)$ is sufficiently large 
(to be determined). 

Fix (temporarily) a generating pair $g,h \in G$. 
For each $1 \leq i \leq m$, 
the probability that $u_i (g,h) \in E_G$ is at least 
$C_2 = C_2 (X) > 0$, by Proposition \ref{orderdivpropn} 
and Corollary \ref{rwcoroll} 
(since $C_1$ is assumed to be sufficiently large, 
depending on $X$, Corollary \ref{rwcoroll} does indeed apply here). 

By independence of the $u_i$, the probability that for every 
$1 \leq i \leq m$, $u_i (g,h) \notin E_G$ is at most $(1-C_2)^m$. 
Setting $m = C_3 \log \lvert G \rvert$, 
for $C_3 = C_3 (X) > 0$ sufficiently large, 
we may take $(1-C_2)^m < \lvert G \rvert^{-2}$. 

Now, the number of possible generating pairs $(g,h)$ for $G$ 
is at most $\lvert G \rvert^2$. 
Taking a union bound over all such pairs $(g,h)$, 
the probability of the event ``for every $1 \leq i \leq m$ 
there exists a generating pair $g,h \in G$ such that $u_i (g,h) \notin E_G$'' is at most $(1-C_2)^m \lvert G \rvert^{2} < 1$. 

Therefore there exist \emph{deterministically} words 
$u_1 , \ldots u_m \in F_2$ 
with $m = C_3 \log \lvert G \rvert = O_X (\log (q))$
of length at most $L = \log \lvert G \rvert ^{O_X (1)} = O_X (\log (q)^ {O_X (1)})$ such that for every generating pair $g,h \in G$, 
there exists $1 \leq i \leq m$ such that $u_i (g,h) \in E_G$. 
Note also that, since may just as well remove $1_G$ from $E_G$ without changing the argument, 
we may assume without loss of generality that all $u_i$ 
are non-trivial in $F_2$. 
Since $F_2$ is torsion-free, $$u_1 ^{b(X,q)} , \ldots , u_m ^{b(X,q)}$$ 
are non-trivial words of length at most 
$O_X (q^{c(X)} \log (q)^{O_X (1)})$. By the definition of $E_G$, 
every generating pair $g,h \in G$ lies in $Z (G,u_i ^{b(X,q)})$ 
for some $1 \leq i \leq m$. Combining the $u_i ^{b(X,q)}$ 
by Lemma \ref{UnionLemma}, we obtain a word satisfying the conditions of Theorem \ref{gensetmainthm}, of length 
$O_X (q^{c(X)} \log (q)^{O_X (1)})$. 
%The proof for $X = {^2}D_l$ is much the same, 
%though we may only bound the probability that $u_i (g,h) \in E_G$ 
%below by $C_2 / q$, so we must take $m=C_3 q\log \lvert G \rvert$. 
%The $u_i ^{b(X,q)}$ are of length $O_X (q \log (q)^{O_X (1)})$, 
%so the word obtained by combining them is of length 
%$O_X (q^3 \log (q)^{O_X (1)})$. 
\end{proof}

For the improvements required in Remark \ref{withbab}, 
we argue as before, but by assuming Conjecture \ref{BabaiConj} 
we may take walks of length $L =C_1 (\log \lvert G \rvert)^{C_1}$, 
with $C_1$ an absolute constant. 

\begin{proof}[Proof of Theorem \ref{randommainthm}]
The same proof applies, 
but we restrict to generating pairs $g,h$ 
for which the conclusion of Theorem \ref{expanderthm} applies, 
so that we may take walks of length 
$L =C_1 \log \lvert G \rvert$, with $C_1 = C_1 (X)>0$. 
\end{proof}

\section{Laws for Non-generating Pairs} \label{NonGenSect}

Having established laws valid for generating pairs in groups of Lie type in subsection \ref{GenSect}, 
we turn our attention to laws valid for \emph{non-generating} pairs. 
Trivially, if $g,h \in G$ satisfy $\langle g,h \rangle \neq G$, 
then there exists a maximal subgroup $M\lneq G$ such that $g,h\in M$. 
Therefore, our goal will be to describe the maximal subgroups of finite quasisimple groups of Lie type, 
and produce short laws which they satisfy. 

\subsection{Structure of Maximal Subgroups} 

For a given Lie type, we will identify finitely many families of subgroups, 
such that every maximal subgroup lies in at least one family, 
and produce a law valid in each family in turn. 
We will then combine the laws for the various families, using Lemma \ref{UnionLemma}. 
Crucially, the number of laws we produce in this way will depend only on the Lie type, and not on the field order $q$. 

Fortunately, there is an extensive literature on the maximal subgroups of finite simple groups, 
much of it developed in the decade following the completion of the CFSG. 
As discussed in the Introduction, 
it transpires that all maximal subgroups 
are an extension of groups for which sufficiently 
short laws are already available: 
they are of small order; nipotent of small class; 
permutation groups of small degree, 
or smaller groups of Lie type. 
For the groups of Lie type which occur, we obtain sufficiently 
short laws by invoking Theorem \ref{strongmainthm} 
for those groups and applying induction. 
To implement our induction, 
we introduce the following strict partial ordering on finite 
simple groups of Lie type: 

\begin{defn} \label{posetdefn}
Let $H = Y (p^{\mu}),G  = X (p^{\lambda})$ be groups of Lie type 
in characteristic $p$. 
We declare that $H \prec G$ if one of the following holds:
\begin{itemize}
\item[(i)] $n (Y,p) < n (X,p)$; 

\item[(ii)] $n (Y,p) = n (X,p)$, $X$ is classical 
and $Y$ is exceptional; 

\item[(iii)] $n (Y,p) = n (X,p)$, $X$ and $Y$ are exceptional and: 
\begin{itemize}
\item[(a)] Either $X=Y$ and $a^{\prime}$ is a proper divisor of $a$ 
with $a/a^{\prime}$ prime; 

\item[(b)] Or $(G,H)$ is one of 
$(E_6(p^{2a^{\prime}}),{^2}E_6(p^{a^{\prime}}))$, 
$(F_4(p^a),{^2}F_4(p^a))$ (for $p=2$) and 
$(G_2(p^a),{^2}G_2(p^a))$ (for $p=3$); 

\end{itemize}

\item[(iv)] $n (Y,p) = n (X,p)$, $X$ and $Y$ are classical and: 
\begin{itemize}
\item[(a)] Either $X \neq {^2}A_l$ and $a^{\prime}$ is a proper divisor of $a$; 

\item[(b)] Or $X = {^2}A_l$ and $a^{\prime}$ is a proper divisor of $2a$; 

\end{itemize}

\item[(v)] $n (Y,p) = n (X,p)$ and: 
\begin{itemize}
\item[(a)] $G = A_l (p^a)$ and $H$ is one of 
$B_{l^{\prime}} (p^a)$, $C_{l^{\prime}} (p^a)$, 
$D_{l^{\prime}} (p^a)$ or ${^2}D_{l^{\prime}} (p^a)$; 

\item[(b)] $p=2$, $G = C_l (p^a)$ and $H$ is one of 
$B_{l^{\prime}} (p^a)$, 
$D_{l^{\prime}} (p^a)$ or ${^2}D_{l^{\prime}} (p^a)$;

\end{itemize}
(we refer to Theorem \ref{minrepdegreethm} for the pairs of 
values $(l,l^{\prime})$ which yield the same value of $n(X,p)$ 
in (v)). 
\end{itemize}
We then extend $\prec$ to be a transitive relation. 
\end{defn}

\begin{rmrk}
\normalfont
\begin{itemize}
\item[{\it (i)}] It is straightforward to verify 
that ``$\prec$'' is a well-defined strict partial ordering. 

\item[{\it (ii)}] It is especially important to note that the 
following relations hold, as consequences of the above. 
\begin{align*}
\PSL_n(q) & \prec \PSU_n(q) \text{ (by (iv)(b));} \\ 
\PSp_n(q) & \prec \PSU_n(q) \text{ (by (v)(a) and transitivity);} \\
\PSU_n (q) & \prec \PSL_n(q^2) \text{ (by (iv)(a)). }
\end{align*}
\item[{\it (iii)}] As we shall see, 
if $H$ is a simple group of Lie type 
in the same characteristic as $G$, 
which arises as a proper section of $G$, 
then $H \prec G$. 
The converse does not hold, 
however it will be much easier in practice to 
work with ``$\prec$'' than to attempt to 
perform induction on the family of sections directly. 
\end{itemize}
\end{rmrk}

\subsection{Geometric Subgroups of Classical Groups}

For the classical groups, 
the key result on the structure of maximal subgroups is Aschbacher's Theorem \cite{Asch}. 
Aschbacher's paper defines eight classes of subgroups, denoted $\mathcal{C}_1$ - $\mathcal{C}_8$, 
and known collectively as the \textquotedblleft geometric subgroups\textquotedblright. 
We shall not define these classes precisely, 
so suffice it to say that each type is associated with some extra geometric structure 
on the natural module associated with our group. 
Aschbacher's Theorem asserts that every maximal subgroup of a quasisimple classical group 
either belongs to one of the classes $\mathcal{C}_i$, 
or belongs to the class of \textquotedblleft non-geometric subgroups\textquotedblright, denoted $\mathcal{S}$. 
Moreover, every subgroup in $\mathcal{S}$ is an almost simple group 
satisfying certain additional irreducibility conditions. 

We make no claims as to the disjointness of the families of subgroups described below, 
or that every subgroup we consider is indeed maximal in the corresponding classical group. 
All that we require is that every maximal subgroup appears at least once in one of the nine classes. 

Before stating the structure theorem for maximal subgroups 
of classical groups we introduce some additional terminology. 
Let $n = n(X,p)$ be as in Corollary \ref{mainthm}. 

\begin{defn} \label{labeltermdefn}
Let $G = X(q)$ be a finite simple group of Lie type, 
with $q = p^{\lambda}$, $p$ prime. 
Let $S$ be a section of $G$. 
\begin{itemize}
\item[(i)] $S$ is a \emph{Lie-like level for $G$} 
if there exist $m,n_i,\lambda_i \in \mathbb{N}_{>0}$ with:
\begin{equation} \label{Lielikeineq}
\sum_{i=1} ^m n_i \lambda_i \leq n(X,p); 
\end{equation}
groups of Lie type 
$G_1 = Y_1 (q^{\lambda_1}),\ldots,G_m = Y_m (q^{\lambda_m})$ 
such that $n_i = n(Y_i,p)$, $\lambda_i a(Y_i,p) \leq a(X,p)$ 
and $G_i \prec G$, 
and a finite abelian group $A$ 
such that $S$ is a quotient of: 
\begin{center}
$L=A \times G_1 \times \cdots \times G_m$.
\end{center}

\item[(ii)] $S$ is a \emph{subfield level for $G$} 
if either (a) there is a group $H = Y (p^{\mu})$ 
of Lie type with $S \cong H$
such that $\mu$ is a proper divisor of $\lambda$, 
with $\lambda/\mu$ prime and $X=Y$ or $(X,Y)=(D_l,{^2}D_l)$, 
or (b) $G = \PSU_n (q)$ and 
$S = \Sp_n(q)$ (for $n$ even) or $\SO_n ^{\epsilon}(q)$. 

\item[(iii)] $S$ is a \emph{$p$-level for $G$} 
if it is a $p$-group. 

\end{itemize}
\end{defn}

\begin{rmrk} \label{labeltermrmrk}
The number of possibilities for $L/A$ 
is bounded by a function of $n$ alone 
where $L$, $A$ are as in Definition \ref{labeltermdefn} (i). 
\end{rmrk}

\begin{thm} \label{structurethmclassical}
Let $M$ be a maximal subgroup of $G$. 
Then either (geometric type) there exists a subnormal series: 
\begin{center}
$M = K_1 \vartriangleright K_2 \vartriangleright K_3 
\vartriangleright K_4 \vartriangleright K_5 = 1$
\end{center}
such that one of the following holds: 
\begin{itemize}
\item[\textbf{$\mathcal{C}_1$:}] 
$K_4=1$, $K_3/K_4$ is a $p$-level for $G$, 
$K_2/K_3$ is a Lie-like level for $G$, and $K_1/K_2$ is abelian; 
\item[\textbf{$\mathcal{C}_2$:}] 
$K_4=1$, $K_3/K_4$ is a Lie-like level for $G$, 
$K_2/K_3$ is abelian, and $K_1/K_2$ is a subgroup of $\Sym (n)$; 
\item[\textbf{$\mathcal{C}_3$:}] 
$K_4=1$, $K_3/K_4$ is a Lie-like level for $G$, 
$K_2/K_3$ is abelian, and $\lvert K_1/K_2 \rvert \leq 2n$; 
\item[\textbf{$\mathcal{C}_4$:}] 
$K_3 = 1$, $K_2/K_3$ is a Lie-like level for $G$, 
and $\lvert K_1/K_2 \rvert \leq n$; 
\item[\textbf{$\mathcal{C}_5$:}] 
$K_3 = 1$, $K_2/K_3$ is a subfield level for $G$, 
and $\lvert K_1/K_2 \rvert \leq n$; 
\item[\textbf{$\mathcal{C}_6$:}] 
$K_3 = 1$, $K_2/K_3$ is a $2$-step nilpotent group, 
and $K_1/K_2$ is a subgroup of $\Sym (n^2)$; 
\item[\textbf{$\mathcal{C}_7$:}] 
$K_4$ is abelian, $K_3/K_4$ is a Lie-like level for $G$, 
$\lvert K_2/K_3 \rvert \leq n^2$, 
and $K_1/K_2$ is a subgroup of $\Sym (n)$; 
\item[\textbf{$\mathcal{C}_8$:}] 
$K_4 = 1$, $K_3 / K_4$ is abelian, 
$K_2 / K_3$ is a Lie-like level for $G$, 
and $K_1/K_2$ is abelian;
\end{itemize}
or (non-geometric type or type $\mathcal{S}$) 
there is a non-abelian finite simple group $S$ 
such that $S \leq M \leq \Aut(S)$ 
and the preimage $\hat{S}$ of $S$ in the full covering group 
$\hat{G}$ of $G$ is absolutely irreducible on the 
natural module for $\hat{G}$. 
\end{thm}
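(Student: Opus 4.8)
The plan is to derive this structure theorem by combining Aschbacher's Theorem \cite{Asch} with the detailed description of the members of each Aschbacher class $\mathcal{C}_i$ found in the standard references (primarily \cite{KleLie} and, for the low-dimensional cases, \cite{BrHoRD}). Aschbacher's Theorem already gives the dichotomy: every maximal subgroup $M$ of a quasisimple classical group either lies in one of the classes $\mathcal{C}_1$--$\mathcal{C}_8$, or lies in class $\mathcal{S}$, in which case $M$ is almost simple and acts absolutely irreducibly (after passing to $\hat{G}$) on the natural module; this last assertion is precisely the non-geometric conclusion of the theorem, so nothing further is needed there. The substance of the proof is therefore to go through the eight geometric classes one at a time and, in each case, read off the four-step subnormal series $K_1 \vartriangleright K_2 \vartriangleright K_3 \vartriangleright K_4 \vartriangleright K_5 = 1$ with the stated properties.

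First I would recall the shape of the generic member of each class. For $\mathcal{C}_1$ (stabilisers of a subspace or a pair of subspaces), $M$ is (up to the outer diagonal/field part $K_1/K_2$) a parabolic-type subgroup with unipotent radical $K_3/K_4$ a $p$-group and Levi quotient $K_2/K_3$ a central product of smaller classical groups over $q$ — a Lie-like level, once one checks the inequality $\sum n_i \lambda_i \le n$ (here all $\lambda_i = 1$, and $\sum n_i \le n$ is clear from the block sizes) and $a(Y_i,p) \le a(X,p)$. For $\mathcal{C}_2$ (imprimitive, stabiliser of a direct-sum decomposition into $t$ subspaces of dimension $n/t$), $M \cong (\text{classical group of degree } n/t)^t . (\text{diagonal abelian}).\Sym(t)$, with $t \le n$; the base group is a Lie-like level (with $m = t$, all $\lambda_i = 1$), the diagonal part is $K_2/K_3$ abelian, and $K_1/K_2 \le \Sym(t) \le \Sym(n)$. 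For $\mathcal{C}_3$ (field extension of degree a prime $s \mid n$), $M$ is a smaller classical group over $q$ (of degree $n/s$ as a module over the big field, which still counts as a Lie-like level with $\lambda_i = 1$ by our bookkeeping — one must double-check this against the definition), extended by an abelian group and then by the Galois group of order $s \le 2n$. For $\mathcal{C}_4$ and $\mathcal{C}_7$ (tensor-product and tensor-induced decompositions), $M$ is a central product of classical groups of degrees $n_i$ with $\prod n_i = n$ (so certainly $\sum n_i \le n$ after taking $\lambda_i = 1$), extended by a small group: in $\mathcal{C}_4$ just a group of order $\le n$ coming from scalars, in $\mathcal{C}_7$ a group of order $\le n^2$ and then a permutation group in $\Sym(t) \le \Sym(n)$ permuting the tensor factors. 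For $\mathcal{C}_5$ (subfield subgroups), $M$ is a classical group of the same degree $n$ over a subfield $\mathbb{F}_{p^\mu}$ with $\lambda/\mu$ prime, or the unitary-over-symplectic/orthogonal degenerations listed in Definition \ref{labeltermdefn}(ii); that is exactly a subfield level, extended by a group of order $\le n$. For $\mathcal{C}_6$ (normaliser of a symplectic-type $r$-group, $r \ne p$), $M \cong r^{1+2k}.\Sp_{2k}(r)$ with $r^k \le n$ — so $K_2/K_3$ is $2$-step nilpotent (the extraspecial-type group and its centre) and $K_1/K_2 \le \Sp_{2k}(r) \le \GL_{2k}(r) \hookrightarrow \Sym(r^{2k}) \le \Sym(n^2)$ — one should verify the last embedding bound carefully. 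Finally $\mathcal{C}_8$ (stabiliser of a classical form on the natural module) gives $M$ a classical group of degree $n$ over $q$ of a different type, extended by abelian groups on both sides: that is a Lie-like level sandwiched between two abelian layers.

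The main obstacle is twofold. The first is a bookkeeping issue: in every case one must verify that the smaller classical (and small exceptional) groups $G_i = Y_i(q^{\lambda_i})$ that appear genuinely satisfy $G_i \prec G$ in the sense of Definition \ref{posetdefn} — this is where clauses (iv), (v) of that definition and the relations in the subsequent remark (e.g. $\PSL_n \prec \PSU_n \prec \PSL_n(q^2)$, $\PSp_n \prec \PSU_n$) are doing real work, since a $\mathcal{C}_8$ subgroup of $\PSU_n(q)$ is an $\Sp_n(q)$ or $\mathrm{SO}_n^\epsilon(q)$ and one needs the partial order to decrease — and that the arithmetic constraints $\sum n_i\lambda_i \le n$ and $\lambda_i a(Y_i,p) \le a(X,p)$ hold, the latter requiring Table \ref{table:lawstable} together with $n(Y_i,p) < n(X,p)$. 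The second, more genuinely delicate, obstacle is that the clean ``generic'' description of each $\mathcal{C}_i$ member above fails for a bounded number of small groups $G = X(q)$ — the low-rank, small-field exceptions catalogued in \cite{BrHoRD} (for $n \le 12$) and in the errata/tables to \cite{KleLie} — where extra maximal subgroups appear, some classes are empty, or the structure of $M$ degenerates. Since Theorem \ref{structurethmclassical} only needs to cover these finitely many $(X,q)$, and for each of them $M$ is a group of bounded order (bounded in terms of $n$), such $M$ trivially fits into, say, the $\mathcal{C}_3$ template with $K_2 = K_3 = 1$ and $K_1/K_2 = M$ of order $\le 2n$ — but I would need to state explicitly that we absorb all such exceptional maximal subgroups this way rather than attempt a uniform treatment. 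With those two points dispatched, the theorem follows by simply quoting Aschbacher's Theorem and the tabulated structure of each class.
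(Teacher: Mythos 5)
Your proposal follows essentially the same route as the paper: the theorem is proved there simply by citing Theorem 2.2.19 and Tables 2.3, 2.5--2.11 of \cite{BrHoRD} (which rest on the Main Theorem of \cite{KleLie}) and reading off the subnormal series class by class, exactly as you outline, including the observation that in case $\mathcal{C}_6$ one has $n=r^m$ so that $K_1/K_2$ acts faithfully on a set of size $r^{2m}=n^2$ and hence embeds in $\Sym(n^2)$. The one caveat is your device of absorbing low-dimensional exceptional maximal subgroups into the $\mathcal{C}_3$ template with $\lvert K_1/K_2\rvert\le 2n$ --- a group of order bounded in terms of $n$ need not have order at most $2n$ --- but this patch is unnecessary, since the cited structure tables are stated uniformly in all dimensions and the theorem only requires that every maximal subgroup appear in at least one of the nine classes.
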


\begin{proof}
This is immediate from Theorem 2.2.19 in \cite{BrHoRD}, 
which in turn is based on the Main Theorem of \cite{KleLie}. 
Class $\mathcal{S}$ is described in Definition 2.1.3 of \cite{BrHoRD}, 
and the structure of the subnormal series for the groups in cases 
$\mathcal{C}_1$-$\mathcal{C}_8$ follows from Tables 2.3, 2.5-2.11 in \cite{BrHoRD}, noting that in case $\mathcal{C}_6$, 
$n = r^m$ for $r$ prime, so that the natural module for $K_1/K_2 = \Sp_{2m} (r)$, $\SO_{2m} ^{\pm} (r)$ or $\Omega_{2m} ^{\pm} (r)$ is a set of order $r^{2m} = n^2$ on which $K_1/K_2$ acts faithfully. 
\end{proof}

Recall the strict partial order $\prec$ introduced in Definition \ref{posetdefn}. 

\begin{propn} \label{geompropn}
Assume that all finite simple groups of Lie type $H$ 
with $H \prec G$ satisfy a law as in Theorem \ref{strongmainthm}. 
Then there is a word $w_{\rm geom} \in F_2$ 
of length $O_X (q^{a(X,p)} \log (q)^{O_X(1)})$ 
such that, if $M \leq G$ is a geometric maximal subgroup 
as in cases $\mathcal{C}_1$-$\mathcal{C}_8$ of Theorem 
\ref{structurethmclassical}, 
then $w_{\rm geom}$ is a law for $M$. 
\end{propn}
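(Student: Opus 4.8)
The plan is to exploit the fact that each geometric maximal subgroup $M$ from Theorem \ref{structurethmclassical} is built as a bounded-length subnormal series $M = K_1 \vartriangleright K_2 \vartriangleright K_3 \vartriangleright K_4 \vartriangleright K_5 = 1$ in which every successive quotient $K_i/K_{i+1}$ is one of a small number of controlled types: a $p$-level (a $p$-group), a Lie-like level (a quotient of $A \times G_1 \times \cdots \times G_m$ with the $G_i \prec G$), a subfield level, an abelian group, a $2$-step nilpotent group, or a permutation group $\leq \Sym(n)$ or $\leq \Sym(n^2)$. For each such type I produce a single short law, and then I assemble a law for all of $M$ using Lemma \ref{ExtnLemma} applied at most four times, and finally take the union over the finitely many classes $\mathcal{C}_1$--$\mathcal{C}_8$ (and over the finitely many isomorphism types of Lie-like level, which by Remark \ref{labeltermrmrk} number at most a function of $n$) using Lemma \ref{UnionLemma}. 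Since the number of classes and the length of each series is bounded purely in terms of $X$, all the combinatorial overhead contributes only constants depending on $X$, so the length of the final word is governed by the longest law among the individual levels.

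First I would handle the ``cheap'' levels. For the abelian quotients, Example \ref{basiclawex}(ii) gives a law of length $4$. For the $2$-step nilpotent quotient in $\mathcal{C}_6$, Example \ref{solubleex} gives a law of bounded length ($4^d$ for small $d$). For the permutation quotients $K_1/K_2 \leq \Sym(n)$ or $\Sym(n^2)$ appearing in $\mathcal{C}_2$, $\mathcal{C}_6$, $\mathcal{C}_7$, Theorem \ref{LawsforAlt} provides a law of length $\exp(O(\log(n)^4 \log\log(n)))$, which is $O_X(1)$ since $n = n(X,p)$ depends only on $X$ and $p$; similarly the quotients of order $\leq 2n$ or $\leq n^2$ satisfy a law of bounded length by Example \ref{maxorderlawex}. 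The $p$-level in $\mathcal{C}_1$: a $p$-group has exponent a power of $p$, but a priori not a small power, so here I would instead use the bound on element orders in $\SL_n$-type groups — actually more carefully, a $p$-subgroup of $G = X(q)$ embeds in a Sylow $p$-subgroup, whose exponent is $p^{\lceil \log_p(h) \rceil}$ for $h$ the Coxeter number (or more crudely is bounded by $q^{O_X(1)}$), so the word $x^{p^k}$ for suitable $k = O_X(\log q)$ is a law of length $q^{O_X(1)}$; this is comfortably within the target $O_X(q^{a(X,p)} \log(q)^{O_X(1)})$ since $a(X,p) \geq 1$ in all classical cases. (In fact $a(X,p) \ge 1$ always for classical $X$, and the $p$-part of $\exp(G)$ is $\le q$ times a constant, so $x^{q^{O_X(1)}}$ suffices.)

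The substantive levels are the Lie-like levels and the subfield levels. For a Lie-like level $S$, a quotient of $L = A \times G_1 \times \cdots \times G_m$ with each $G_i = Y_i(q^{\lambda_i}) \prec G$ and $\lambda_i a(Y_i,p) \leq a(X,p)$: by the inductive hypothesis each $G_i$ has a law of length $O_{Y_i}((q^{\lambda_i})^{a(Y_i,p)} \log(q^{\lambda_i})^{O_{Y_i}(1)}) = O_X(q^{a(X,p)} \log(q)^{O_X(1)})$, using $\lambda_i a(Y_i,p) \le a(X,p)$; the abelian factor $A$ has a law of length $4$; by Corollary \ref{ProdLemma} (whose hypotheses are met since $m$ and the number of isomorphism types are bounded in terms of $n$) $L$, and hence its quotient $S$, has a law of the same asymptotic length. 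For a subfield level $S$: in case (a), $S \cong Y(p^\mu)$ with $p^\mu \le q$ and $Y \in \{X, \text{the untwisted/twisted partner}\}$, so Proposition \ref{maxorderlawpropn} (with $N = q$) gives a law of length $O((q)^{6a(X,p)})$ — wait, that exponent is too large; instead I would note that since $S \prec G$ (by Definition \ref{posetdefn}(iv) or the $D_l$/${}^2D_l$ clause) the inductive hypothesis applies directly to give a law of length $O_X((p^\mu)^{a(Y,p)} \log(p^\mu)^{O_X(1)}) = O_X(q^{a(X,p)}\log(q)^{O_X(1)})$; in case (b), $G = \PSU_n(q)$ and $S = \Sp_n(q)$ or $\SO_n^\epsilon(q)$, which are central extensions of $\PSp_n(q) \prec \PSU_n(q)$, resp. of an orthogonal simple group $\prec \PSU_n(q)$ (the relevant $\prec$-relations are recorded in the Remark after Definition \ref{posetdefn}), so by Corollary \ref{autextmainthm}-type reasoning (Lemma \ref{ExtnLemma} applied to the central extension) plus the inductive hypothesis, $S$ has a law of length $O_X(q^{a(X,p)}\log(q)^{O_X(1)})$. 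The main obstacle is bookkeeping: one must check in every one of the classes $\mathcal{C}_1$--$\mathcal{C}_8$ that the Lie-like and subfield levels that actually occur satisfy $G_i \prec G$ and the exponent inequality $\lambda_i a(Y_i,p) \le a(X,p)$, so that the inductive hypothesis is legitimately applicable — but this is exactly what the definitions of ``Lie-like level'' and ``subfield level'' (Definition \ref{labeltermdefn}) and of $\prec$ (Definition \ref{posetdefn}) were engineered to guarantee, so the verification is routine given those definitions. Assembling: within each class, Lemma \ref{ExtnLemma} applied $\le 4$ times multiplies the four level-laws, giving a law for $M$ of length $O_X(q^{a(X,p)}\log(q)^{O_X(1)})$ (the $O_X(1)$ and bounded-length factors only change constants); then Lemma \ref{UnionLemma} over the $8$ classes and the $O_n(1)$ isomorphism types of Lie-like level produces the single word $w_{\rm geom}$ of the stated length.
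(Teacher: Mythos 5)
Your overall architecture — a law for each factor of the subnormal series, assembled with Lemma \ref{ExtnLemma} and then Lemma \ref{UnionLemma} over the classes — is exactly the paper's, and your treatment of the abelian, nilpotent, permutation, small-order, Lie-like and subfield levels is either identical to the paper's or a harmless variant (the paper handles subfield levels with $d\geq 7$ via Proposition \ref{maxorderlawpropn} rather than induction, which keeps the number of invocations of the inductive hypothesis bounded independently of $q$, but your direct induction over all prime divisors of $\lambda$ also works since there are only $O(\log\log q)$ of them).

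There is, however, a genuine gap in your treatment of the $p$-level in class $\mathcal{C}_1$. The power law $x^{p^{\lceil\log_p h\rceil}}$ you propose has length $\Theta(p)$ once $p$ exceeds the Coxeter number, so in the worst case ($q=p$) your law for the unipotent radical has length $\Theta(q)$, not $O_X(1)$. Your assembly step then treats this as a ``bounded-length factor'', but it is not: in class $\mathcal{C}_1$ the $p$-level sits below a Lie-like level whose shortest law can genuinely have length $\Theta(q^{a(X,p)}\log(q)^{O_X(1)})$ (e.g.\ the Levi of a point stabiliser in $\PSL_n(q)$ for $n$ odd contains $\SL_{n-1}(q)$ with $a(A_{n-2},p)=\lfloor n/2\rfloor=a(A_{n-1},p)$), and Lemma \ref{ExtnLemma} \emph{multiplies} the two lengths, yielding $O_X(q^{a(X,p)+1}\log(q)^{O_X(1)})$ — a full factor of $q$ over the target. (Relatedly, your remark that a law of length $q^{O_X(1)}$ is ``within the target since $a\geq 1$'' is not a valid inference even for the $p$-level in isolation.) The paper avoids this by not using a power law at all: a $p$-subgroup of $\PGL_n(q)$ is nilpotent of class at most $n-1$ (Lemma \ref{nilpSylGLlem}, since it is conjugate into the unitriangular matrices), so Example \ref{solubleex} furnishes an iterated commutator law of length depending only on $n$, i.e.\ $O_X(1)$, and the multiplication in Lemma \ref{ExtnLemma} then only changes constants. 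Substituting that argument for your exponent bound repairs the proof.
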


The following will be used to deal with the $p$-levels. 

\begin{lem} \label{nilpSylGLlem}
Suppose $q$ is a power of the prime $p$, and let $P$ 
be a Sylow $p$-subgroup of $\PGL_n(q)$. 
Then $P$ is nilpotent of class at most $n-1$. 
\end{lem}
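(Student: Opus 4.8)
The plan is to exhibit an explicit Sylow $p$-subgroup of $\PGL_n(q)$ and bound its nilpotency class directly. Since $|\PGL_n(q)|_p = |\GL_n(q)|_p = q^{\binom{n}{2}} = |\SL_n(q)|_p$, a Sylow $p$-subgroup of $\PGL_n(q)$ is the image of a Sylow $p$-subgroup of $\GL_n(q)$, and it is standard that the group $U$ of upper unitriangular matrices in $\GL_n(q)$ has order exactly $q^{\binom{n}{2}}$, hence is such a Sylow subgroup. As $U \cap Z(\GL_n(q)) = 1$ (the only scalar unitriangular matrix is the identity), $U$ maps isomorphically onto its image $P$ in $\PGL_n(q)$, so it suffices to show $U$ is nilpotent of class at most $n-1$.

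First I would set up the standard lower central series filtration of $U$ by the subgroups $U_k = \{ g \in U : g_{ij} = 0 \text{ for } 1 \leq j - i < k \}$ for $k \geq 1$, so that $U_1 = U$, and $U_n = 1$. The key computation is that $[U_k, U_m] \subseteq U_{k+m}$: writing elements of $U$ as $I + N$ with $N$ strictly upper triangular and $N \in U_k$ meaning the nonzero entries of $N$ sit at least $k$ off the diagonal, one expands the commutator $(I+M)(I+N)(I+M)^{-1}(I+N)^{-1}$ and observes that the leading term is $MN - NM$, which lies $k+m$ above the diagonal, while all higher-order terms involve even more products of $M$'s and $N$'s and hence lie even further off the diagonal. (This is essentially the fact that the associated graded of $U$ is the nilpotent Lie algebra of strictly upper triangular matrices, where $[\mathfrak{n}_k, \mathfrak{n}_m] \subseteq \mathfrak{n}_{k+m}$.) In particular $[U, U_k] \subseteq U_{k+1}$, so the lower central series of $U$ satisfies $\gamma_{k}(U) \subseteq U_k$, and therefore $\gamma_n(U) \subseteq U_n = 1$, giving nilpotency class at most $n-1$.

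The main obstacle — really the only point requiring care — is the commutator containment $[U_k, U_m] \subseteq U_{k+m}$, and specifically handling the inverse: $(I+M)^{-1} = I - M + M^2 - \cdots$ is a finite sum (nilpotency of $M$) whose terms $M^j$ for $j \geq 1$ lie in $U_{jk} \subseteq U_k$, so $(I+M)^{-1} \in U_k$, and then a direct expansion of the four-fold product shows every term beyond the identity lies in $U_{k+m}$. This is a routine but slightly fiddly matrix manipulation which I would present compactly rather than in full term-by-term detail. No appeal to any deeper structure theory is needed; the bound $n-1$ is exactly the length of the chain $U = U_1 \supsetneq U_2 \supsetneq \cdots \supsetneq U_n = 1$.
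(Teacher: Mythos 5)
Your proof is correct. The paper states this lemma without proof, treating it as standard, and your argument --- identifying the upper unitriangular matrices as a Sylow $p$-subgroup of $\GL_n(q)$ meeting the centre trivially, and then bounding the class via the filtration $U_k$ with $[U_k,U_m]\subseteq U_{k+m}$ --- is exactly the standard justification the authors have in mind.
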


\begin{coroll} \label{uniplawcoroll}
There is a non-trivial word in $F_2$, 
of length depending only on $n$, 
which is a law for every $p$-subgroup of $\PGL_n (q)$. 
\end{coroll}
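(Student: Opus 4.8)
The plan is to combine Lemma \ref{nilpSylGLlem} with the standard fact that a $p$-subgroup of $\PGL_n(q)$ embeds in a Sylow $p$-subgroup, together with Example \ref{solubleex}, which provides a law of bounded length for every nilpotent group of bounded step. First I would observe that every $p$-subgroup $Q$ of $\PGL_n(q)$ is contained in some Sylow $p$-subgroup $P$ of $\PGL_n(q)$, by Sylow's theorem. By Lemma \ref{nilpSylGLlem}, $P$ is nilpotent of class at most $n-1$, and nilpotency of class $\leq n-1$ passes to subgroups, so $Q$ is nilpotent of class at most $n-1$ as well.

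It then remains to exhibit a single non-trivial word in $F_2$, depending only on $n$, that is a law for every nilpotent group of class at most $n-1$. This is exactly provided by Example \ref{solubleex}: choosing $d$ with $2^d \geq n-1$ (so $d = \lceil \log_2 (n-1) \rceil$ suffices for $n \geq 2$; for $n=1$ the group $\PGL_1(q)$ is trivial and any law works), we obtain a non-trivial word $w_d \in F_2$ of length at most $4^d = O(n^2)$ which is a law for every nilpotent group of step at most $2^d$, hence in particular for every $p$-subgroup of $\PGL_n(q)$. Since $d$ depends only on $n$, so does the length of $w_d$, and this is the word required by the Corollary.

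There is essentially no obstacle here: the statement is a direct corollary of the preceding Lemma and an already-established general principle about laws for nilpotent groups. The only minor point to be careful about is the degenerate small cases ($n=1$, and ensuring the word produced is genuinely non-trivial, which is guaranteed since $F_2$ is torsion-free and $w_d$ is non-trivial by construction in Example \ref{solubleex}); these are handled in a line.
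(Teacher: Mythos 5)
Your proposal is correct and follows exactly the route the paper takes: the paper's proof of this corollary is the one-line observation that it follows from Lemma \ref{nilpSylGLlem} together with Example \ref{solubleex}, and you have simply spelled out the intermediate steps (passing to a Sylow $p$-subgroup, inheritance of nilpotency class by subgroups, and the choice of $d$ with $2^d \geq n-1$). Nothing further is needed.
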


\begin{proof}
This follows from Lemma \ref{nilpSylGLlem} 
and Example \ref{solubleex}. 
\end{proof}

\begin{proof}[Proof of Proposition \ref{geompropn}]
By Lemmas \ref{UnionLemma} and \ref{ExtnLemma}, 
it will suffice to provide a law of appropriate length 
for each of the factors 
$K_i / K_{i+1}$ in the subnormal series for $M$ 
in each of the cases $\mathcal{C}_1$-$\mathcal{C}_8$. 

Abelian factors and the $2$-step nilpotent factor from case 
$\mathcal{C}_6$ satisfy laws of bounded length. 
The $p$-level from case $\mathcal{C}_1$ 
is dealt with by Corollary \ref{uniplawcoroll} 
(with the length of the law obtained as in 
Example \ref{solubleex}). 
Those factors of order bounded by a polynomial function of $n$ 
(occuring in cases $\mathcal{C}_3$, $\mathcal{C}_4$, 
$\mathcal{C}_5$ and $\mathcal{C}_7$) 
are handled by Theorem \ref{allgrpsthm}. 
Those factors embedding into $\Sym (n)$ 
(cases $\mathcal{C}_2$ and $\mathcal{C}_7$) 
or $\Sym (n^2)$ (case $\mathcal{C}_6$) 
are handled by Theorem \ref{LawsforAlt} 
(with the second, stronger bound being available 
in the setting of Remark \ref{withbab}). 

We are left with the groups occuring as Lie-like or 
subfield levels for $G$ 
(these being the only cases in which the induction hypothesis is actually used). 

First suppose $S$ is a Lie-like level for $G$. 
Let $L$, $A$, $m$, $G_i$, $n_i$ and $\lambda_i$ be as in 
Definition \ref{labeltermdefn} (i). 
By Lemma \ref{ExtnLemma} it suffices to produce a short law valid 
simultaneously in all possible $L/A$. 
Indeed by Remark \ref{labeltermrmrk} and Lemma \ref{UnionLemma}, 
it suffices to produce a short law for each 
of the possible groups $L/A$ individually. 
By induction, each $G_i$ satisfies a law of length: 
\begin{center}
 $O_{X_i} (q^{\lambda_i a(X_i,p)} (\lambda_i \log (q))^{O_{X_i}(1)}) 
 = O_X (q^{a(X,p)} \log (q)^{O_X(1)})$
\end{center}
(since by (\ref{Lielikeineq}) all $\lambda_i \leq n$ and either 
all $n_i < n$ or $m=1$ and $\lambda_1 = 1$). 
By Lemma \ref{ExtnLemma}, $L$ satisfies a law of length at most: 
\begin{center}
$O_X (q^{a(X,p)} \log (q)^{O_X(1)})$. 
\end{center}
Therefore suppose $S$ is a subfield level for $G$. 
In case (b) of Definition \ref{labeltermdefn} (ii), 
$S$ satisfies a sufficiently short law 
by induction (using Definition \ref{posetdefn} (iv)(b)). 
Suppose therefore that we are in case (a) 
of Definition \ref{labeltermdefn} (ii).  
Let $d=\lambda/\mu$ be the degree of the extension of the field 
over which $G$ is defined, over the field over which 
$S$ is defined. 
By Proposition \ref{maxorderlawpropn}, there exists a 
word $w_{small}$ of acceptable length which is a 
law for all $S$ such that $d \geq 7$. 
Meanwhile for $d=2,3$ or $5$ there exists by induction 
a word $w_d$ of acceptable length which is a law for $S$. 
Combining these laws by Lemma \ref{UnionLemma} 
we have a word of acceptable length which is a law 
for all subfield levels. 
\end{proof}

\subsection{Non-geometric Subgroups of Classical Groups}

Recall that, by Aschbacher's Theorem, 
if $M$ is a maximal subgroup of $G$ not lying in any of the classes 
$\mathcal{C}_1$-$\mathcal{C}_8$ above 
(that is, $M$ is a \emph{non-geometric} subgroup), 
there is a non-abelian finite simple group $S$ 
such that $S \leq M \leq \Aut(S)$ 
and the full covering group $\tilde{S}$ of $S$ 
is absolutely irreducible on the 
natural module $V$ for $G$. 
In this subsection we show that there is a short law 
satisfied by all such $M$. 

\begin{propn}\label{nongeomprop}
Assume that all finite simple groups of Lie type $H$ 
with $H \prec G$ satisfy a law as in Theorem \ref{strongmainthm}. 
Then there is a word $w_{\rm nongeom} \in F_2$ 
of length $O_X (q^{a(X,p)} \log (q)^{O_X(1)})$ 
such that, if $M \leq G$ is a non-geometric maximal subgroup 
as in cases $\mathcal{S}$ of Theorem 
\ref{structurethmclassical}, 
then $w_{\rm nongeom}$ is a law for $M$. 
\end{propn}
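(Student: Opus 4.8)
The plan is to exploit the strong restrictions on the simple group $S$ arising in class $\mathcal{S}$: such $S$ is absolutely irreducible on the natural module $V$ of dimension $n = n(X,p)$ for $\hat{G}$, so a faithful projective representation of $S$ exists in dimension $n$. By the CFSG we split into cases according to the isomorphism type of $S$: (1) $S$ is an alternating group $\Alt(k)$; (2) $S$ is a sporadic group; (3) $S$ is a group of Lie type in the same characteristic $p$ as $G$; (4) $S$ is a group of Lie type in cross-characteristic (characteristic $\ell \neq p$). In each case I will produce a law of length $O_X(q^{a(X,p)}\log(q)^{O_X(1)})$ valid for $M$, using the fact that $S \leq M \leq \Aut(S)$ together with Lemma \ref{ExtnLemma}, Example \ref{solubleex} and Theorem \ref{simpleautthm} (so that $\Aut(S)/S$ is soluble of bounded derived length), and finally combine the finitely many resulting laws by Lemma \ref{UnionLemma}. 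Since the number of cases depends only on $X$ (through $n$), this gives a single word $w_{\rm nongeom}$ of the required length.

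For the concrete bounds I would argue as follows. In case (2) there are only finitely many sporadic groups, each satisfying a law of bounded length, so $M$ does too; these contribute nothing asymptotically. In case (1), $\Alt(k) \hookrightarrow$ (projective rep of dimension $n$) forces $k = O_n(1)$ by the classical bounds on minimal faithful representation degrees of alternating groups (the work of Wagner cited in the introduction): the smallest faithful $\overline{\mathbb{F}}_q$-representation of $\Alt(k)$ has dimension growing with $k$, so $k$ is bounded in terms of $n$. Then $\Sym(k)$ — and hence $M$ — satisfies a law of length bounded in terms of $n$ by Theorem \ref{allgrpsthm} (or trivially by Theorem \ref{LawsforAlt}). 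In case (4), the Landazuri--Seitz bounds say that a faithful projective $\overline{\mathbb{F}}_p$-representation of a Lie type group $Y(\ell^f)$ in cross-characteristic has dimension at least roughly $\ell^{f} / \text{(something bounded)}$, so $|S| \leq (\log q)^{O_n(1)}$; hence $M$ has order a fixed polynomial in $\log q$, and Theorem \ref{allgrpsthm} provides a law for $M$ of length $(\log q)^{O_n(1)}$, which is well within budget.

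The genuinely substantive case is (3): $S = Y(p^f)$ is a simple group of Lie type in defining characteristic $p$, absolutely irreducible on $V$. Here I would invoke the representation-theoretic input of Donkin \cite{Donkin} and Liebeck \cite{Liebeck} (as flagged in the introduction) to bound the parameters of $S$: the key point is that an absolutely irreducible $\overline{\mathbb{F}}_p$-module of dimension $n$ for a Lie type group $Y(p^f)$ forces $f \cdot a(Y,p) \leq a(X,p)$ (and forces the rank of $Y$, hence $n(Y,p)$, to be at most $n(X,p)$) — essentially the same numerical constraint built into the definition of a ``Lie-like level'' and of the partial order $\prec$. Consequently $S \prec G$, so by the induction hypothesis $S$ satisfies a law of length $O_{Y}(p^{f \, a(Y,p)} (\log p^f)^{O_Y(1)}) = O_X(q^{a(X,p)} \log(q)^{O_X(1)})$; passing from $S$ to $M \leq \Aut(S)$ via Corollary \ref{autextmainthm} (or directly via Lemma \ref{ExtnLemma}, Example \ref{solubleex} and Theorem \ref{simpleautthm}) changes the length only by a bounded factor.

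The main obstacle I anticipate is case (3): one must extract from the Donkin--Liebeck dimension bounds the precise inequality $f\, a(Y,p) \le a(X,p)$ relating the exponent $a$ of the embedded group to that of $G$ — not merely a dimension bound — and verify it uniformly across all pairs of types, including the twisted types (where the relationship between $n(Y,p)$, $a(Y,p)$ and the Frobenius twist $f$ is more delicate, cf. the ${}^2A$, ${}^2D$, ${}^3D_4$ entries in Table \ref{table:lawstable}). Once that numerical bookkeeping is done, membership of $S$ below $G$ in the order $\prec$ is automatic and the induction closes. I would also remark, for Remark \ref{withbab}, that the only place the dependence on $n$ in the final law degrades is through the number of cases in the CFSG split and the explicit Landazuri--Seitz and Donkin--Liebeck constants, all of which are explicitly computable functions of $n$.
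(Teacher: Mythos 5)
Your case division (sporadic, alternating via Wagner, cross-characteristic via Landazuri--Seitz, defining characteristic via Donkin--Liebeck, with $\Out(S)$ absorbed by Theorem \ref{simpleautthm} and Lemma \ref{ExtnLemma}, and everything combined by Lemma \ref{UnionLemma}) is exactly the architecture of the paper's proof, and the first three cases are handled correctly. The gap is the one you yourself flag in case (3), and it is not mere bookkeeping: the inequality you want, namely $f\,a(Y,p)\le \lambda\,a(X,p)$ for $S=Y(p^f)$ inside $G=X(p^\lambda)$, does \emph{not} follow from the Donkin--Liebeck dimension bounds alone. The dimension bounds only force $n(Y,p)^{f/(\lambda,f)}\le n(X,p)$, which pins down the exponents satisfactorily when the field of $S$ is strictly larger than that of $G$ (so $n(Y,p)\le\sqrt{n(X,p)}$) or when $n(Y,p)<n(X,p)$ (where one still has to check the $a$-values case by case against Table \ref{table:lawstable}, since $a$ is not a monotone function of $n$ alone). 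But when $n(Y,p)=n(X,p)$ and the fields coincide, the inequality between $a$-values can genuinely fail: for example $X=D_l$ with $l$ even and $q$ odd has $a(X,p)=l-2$, while $Y=C_l$ or ${}^2D_l$ has the same minimal representation dimension $2l$ but $a(Y,p)=l$. No representation-theoretic dimension count rules these out. The paper disposes of this residual case by a completely different ingredient, Liebeck's order bound on non-geometric maximal subgroups (\cite{Liebeck} Theorem 4.1: such a subgroup has order less than $q^{3n}$, resp.\ $q^{6n}$ for ${}^2A_l$), which contradicts the lower bound $\lvert Y(q)\rvert=\Omega_Y(q^{(n^2-n)/2})$ once $n\ge 13$; the range $n\le 12$ is settled by the explicit tables of maximal subgroups in \cite{BrHoRD}. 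Without this (or an equivalent exclusion of equal-dimension, equal-field embeddings) your induction does not close.

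Two smaller points. First, when the field of $S$ is a proper subfield of that of $G$, the number of possible $S$ grows with $\log_p(q)$; the paper keeps the union bounded by invoking Proposition \ref{maxorderlawpropn} for subfield index at least $12$ and induction only for small indices, and you should say something about why combining a $q$-dependent number of words via Lemma \ref{UnionLemma} stays within the polylogarithmic budget. Second, your reading of Landazuri--Seitz in case (4) is slightly off: the bound $\dim(\rho)\ge c_1 r^{c_2 m}$ bounds $\lvert S\rvert$ by an explicit function of $n$ alone, independent of $q$, not merely by $(\log q)^{O_n(1)}$; your weaker claim still suffices for the length estimate, so this does not affect correctness.
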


As always, by Lemma \ref{UnionLemma} 
it suffices to produce boundedly many sufficiently 
short laws such that each $M$ satisfies one of them. 
First, by the CFSG we may exclude the case of $S$ sporadic: 
there are finitely many possibilities for the corresponding $M$, 
so $M$ satisfies a law of \emph{bounded} length. 
For the same reason, we may omit 
from consideration an arbitrarily large bounded 
number of other possibilities for $S$. 

Second, $S \vartriangleleft M$, so we may naturally identify $M$ with a subgroup of $\Aut (S)$, 
and $M/S$ with a subgroup of $\Out (S)$. 
Thus $M/S$ is soluble of derived length at most $3$, 
so applying Lemma \ref{ExtnLemma} and Example \ref{solubleex}, 
it suffices to find a short law for $S$ 
(note that we are not using the full power of the 
Schreier hypothesis here, since we already excluded sporadic 
groups). 
Let $\Lie (p)$ be the class of all finite simple groups 
of Lie type in characteristic $p$. 
We deal separately with the three cases of $S$ an alternating group; $S$ a group of Lie type but $S \notin \Lie (p)$, 
and $S \in \Lie (p)$. 

The following standard observation 
(following from Schur's Lemma) will allow us to move 
between linear representations of $\tilde{S}$ 
and projective representations of $S$. 

\begin{lem} \label{BurnsideLemma}
Let $G$ be a group, and let $\rho$ be an absolutely irreducible 
linear representation of $G$. 
Then $\rho(Z(G))$ consists of scalar matrices. 
\end{lem}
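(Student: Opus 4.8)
The plan is to deduce this from Schur's Lemma, in the form appropriate to absolutely irreducible representations. Write $\rho \colon G \to \GL_n(k)$ for the given representation, acting on $V = k^n$. First I would note that for any $z \in Z(G)$ and any $g \in G$ we have $\rho(z)\rho(g) = \rho(zg) = \rho(gz) = \rho(g)\rho(z)$, so $\rho(z)$ is a $G$-equivariant endomorphism of $V$, i.e. $\rho(z) \in \mathrm{End}_{k[G]}(V)$.

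Next I would invoke the standard characterisation of absolute irreducibility (Burnside's theorem): $\rho$ is absolutely irreducible exactly when the $k$-linear span of $\rho(G)$ is all of $M_n(k)$, equivalently when $\mathrm{End}_{k[G]}(V) = k \cdot \id_V$. Granting this, since $\rho(z)$ commutes with every $\rho(g)$ it commutes with all of $M_n(k)$, hence lies in the centre of $M_n(k)$, which is precisely the set of scalar matrices. Thus $\rho(z)$ is scalar for every $z \in Z(G)$, as required.

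The only point needing care --- and the main, if minor, obstacle --- is being precise about what ``absolutely irreducible'' means, since over a field $k$ that is not algebraically closed Schur's Lemma by itself only tells us that $\mathrm{End}_{k[G]}(V)$ is a division algebra over $k$, not that it equals $k$. The cleanest fix is to pass to the algebraic closure: by hypothesis $\rho \otimes_k \overline{k}$ is irreducible, so Schur's Lemma over the algebraically closed field $\overline{k}$ gives $\mathrm{End}_{\overline{k}[G]}(V \otimes_k \overline{k}) = \overline{k} \cdot \id$; hence $\rho(z) \otimes 1$, and therefore $\rho(z)$ itself, is a scalar matrix. Alternatively one cites Burnside's theorem directly. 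Both routes are entirely routine, so I would simply adopt whichever notion of absolute irreducibility is in force in the sources being used (here, that of \cite{BrHoRD}).
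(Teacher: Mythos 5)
Your proof is correct and is exactly the standard argument the paper has in mind: the paper gives no proof, merely describing the lemma as ``following from Schur's Lemma,'' and your reduction via $\mathrm{End}_{k[G]}(V)=k\cdot\id$ (or equivalently Burnside's theorem after extending scalars) is precisely that route. Your remark about the correct handling of absolute irreducibility over a non-algebraically-closed field is a sensible precaution and does not indicate any gap.
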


Consider first the case that $S \cong \Alt(m)$ is alternating. 
The work of Wagner allows us to bound $m$ in terms of the dimension $n$ of $V$ alone. 
In particular 
there is a law of length depending only on $n$ which is satisfied simultaneously by all such $S$. 

\begin{thm} \label{WagnerThm}
Let $m \geq 9$ and let $\rho$ be a non-trivial absolutely 
irreducible representation of the full covering group 
$\widetilde{\Alt}(m)$ of $\Alt(m)$. 
over an arbitrary field $\mathbb{F}$. 
\begin{itemize}
\item[(i)] If $\rho (Z (\widetilde{\Alt}(m))) \neq 1$, then $\charac (\mathbb{F}) \neq 2$, 
and $\dim (\rho) \geq 2^{\lfloor (m-s-1)/2 \rfloor}$, 
where there are non-negative integers $w_1 > \ldots > w_s$ such that:
\begin{center}
$m = 2^{w_1} + \ldots + 2^{w_s}$. 
\end{center}
In particular, for any $\epsilon > 0$, 
$\dim (\rho) \geq 2^{(1-\epsilon)m/2}$ for $m$ sufficiently large. 

\item[(ii)] If $\rho (Z (\widetilde{\Alt}(m))) = 1$ and $\charac (\mathbb{F}) \neq 0$, 
then $\dim (\rho) \geq m-2$. 

\end{itemize}
\end{thm}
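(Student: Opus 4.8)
The plan is to prove Theorem \ref{WagnerThm} by citing and unpacking Wagner's series of papers \cite{Wagner1,Wagner2,Wagner3}, which together classify the minimal-dimensional projective representations of the alternating groups in every characteristic. The two cases correspond exactly to the two natural families of representations: the faithful representations of the double cover $\widetilde{\Alt}(m)$ (the ``spin'' or ``basic spin'' representations), which only exist away from characteristic $2$ and whose dimension grows exponentially in $m$, and the representations that factor through $\Alt(m)$ itself, where the minimal faithful dimension is governed by the deleted permutation module and equals $m-2$ (or $m-1$ in the excluded small characteristics, but $m-2$ is a safe lower bound for $m \geq 9$).

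First I would set up the dichotomy via Lemma \ref{BurnsideLemma}: since $\rho$ is absolutely irreducible, $\rho(Z(\widetilde{\Alt}(m)))$ consists of scalars, so it is either trivial or a nontrivial subgroup of the center; since $Z(\widetilde{\Alt}(m))$ has order $2$ for $m \geq 5$ (excepting the exceptional Schur multipliers at $m=6,7$, which we may discard as they fall under ``finitely many possibilities''), the two clauses of the theorem are genuinely exhaustive. For case (i), I would invoke Wagner's lower bound on the dimension of a faithful representation of the spin cover: the key input is that in characteristic $\neq 2$ the smallest faithful $\widetilde{\Alt}(m)$-module has dimension the appropriate binary-digit-sensitive quantity $2^{\lfloor (m-s-1)/2 \rfloor}$, where $s$ is the number of $1$'s in the binary expansion of $m$; the fact that this is $\geq 2^{(1-\epsilon)m/2}$ for large $m$ is then elementary, since $s = O(\log m)$. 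For case (ii), I would cite the fact (going back to Wagner, and also available from the modular representation theory of symmetric groups, e.g. the fully-deleted permutation module) that for $m \geq 9$ every nontrivial representation of $\Alt(m)$ in positive characteristic has dimension at least $m-2$, the bound being attained by the heart of the natural permutation module.

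The main obstacle will be purely one of bookkeeping and citation: there is no genuinely hard mathematical content for us to supply here — everything is in the literature — but one must be careful to state the bounds in a form uniform over all characteristics (Wagner's papers treat characteristic $0$, odd characteristic, and characteristic $2$ somewhat separately), to exclude cleanly the small values $m \leq 8$ and the sporadic Schur multiplier cases $m = 6, 7$, and to verify that the ``$\charac(\mathbb{F}) \neq 2$'' conclusion in case (i) really does follow (the double cover of $\Alt(m)$ has no faithful representation in characteristic $2$ because $Z(\widetilde{\Alt}(m))$ is a $2$-group and would have to map to scalar matrices of order $2$, impossible over a field of characteristic $2$). I would present the statement essentially as a black box drawn from \cite{Wagner1,Wagner2,Wagner3}, with the elementary estimate $s = O(\log m)$ spelled out to justify the ``in particular'' clause, and then immediately apply it: since $\dim(\rho) = n$ is fixed, the inequalities force $m = O_n(1)$, whence $\Alt(m)$ ranges over a bounded set of groups and Theorem \ref{allgrpsthm} supplies a law of length depending only on $n$.
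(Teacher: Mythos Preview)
Your proposal is correct and takes essentially the same approach as the paper: both proofs amount to citing Wagner's three papers \cite{Wagner1,Wagner2,Wagner3} for cases (i) and (ii) respectively, with the paper's proof being even terser (it simply invokes Theorem 1.3 of \cite{Wagner1} for (i) and Theorem 1.1 of \cite{Wagner2} or \cite{Wagner3} for (ii)). Your additional scaffolding---the Schur-lemma dichotomy, the explicit reason $\charac(\mathbb{F}) \neq 2$, and the $s = O(\log m)$ estimate for the ``in particular'' clause---is all correct and helpful context, though the paper omits it.
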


\begin{proof}
If the hypothesis of (i) holds, then Theorem 1.3 of \cite{Wagner1} 
applies, as the induced projective representation of $\Alt(m)$ 
is \emph{proper}, in the sense of \cite{Wagner1}. 

In the setting of (ii), $\rho$ descends to a linear representation 
of $\Alt(m)$ then we are done by 
Theorem 1.1 of \cite{Wagner2} or Theorem 1.1 of \cite{Wagner3}. 
\end{proof}

The bound for the length of the law satisfied concurrently 
by the alternating $S$ arising in this case, 
required for Remark \ref{withbab}, follows straightforwardly 
from the dimension bounds in 
Theorem \ref{WagnerThm}, Theorem \ref{LawsforAlt} 
(recalling that Conjecture \ref{BabaiConj} is assumed in 
Remark \ref{withbab}) and the fact that $(\Alt(m))_m$ 
is an ascending nested sequence. 

Second, suppose $S$ to be a simple group of Lie type with $S \notin \Lie (p)$. Lower bounds on the dimensions of cross-characteristic representations of finite simple groups of Lie type are provided by the work of Landazuri and Seitz \cite{LanSei}. 
From their very detailed results, we require only the following, 
which may be read off directly from the table 
in the main result of \cite{LanSei}. 

\begin{thm} \label{LandSeitThm}
There exist explicit absolute constants $c_1 , c_2 > 0$ 
such that the following holds. 
Let $S=Y(r)$ be a finite simple group of Lie type 
over a finite field of order $r$, 
and let $m$ be the minimal dimension of a 
faithful projective representation of $S$ 
over $\overline{\mathbb{F}_r}$. 
Let $\rho$ be a non-trivial projective representation of 
$S$ over $\mathbb{F}_q$. 
If $(r,q)=1$ then: 
\begin{center}
$\dim (\rho) \geq c_1 r^{c_2 m}$. 
\end{center}
\end{thm}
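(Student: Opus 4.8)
The plan is, for each fixed Lie type $Y$, to reduce the statement to the explicit cross-characteristic dimension bounds of Landazuri and Seitz \cite{LanSei}, and then to compare those bounds against the defining-characteristic minimal dimensions recorded in Theorem \ref{minrepdegreethm}. Write $p_r$ for the characteristic of $\mathbb{F}_r$. Since $\gcd(r,q)=1$, the characteristic of $\mathbb{F}_q$ differs from $p_r$, so $S = Y(r)$ is being represented in cross characteristic, which is exactly the regime controlled by \cite{LanSei}.

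First I would carry out the routine reductions. A projective representation over $\mathbb{F}_q$ keeps its dimension when inflated to $\overline{\mathbb{F}_q}$, so we may work over $\overline{\mathbb{F}_q}$. Since $S$ is simple and $\rho$ is non-trivial we have $\ker \rho = 1$, so $\rho$ is faithful; lifting $\rho$ to a linear representation of a cover of $S$, forming its semisimplification (which cannot be a sum of trivial modules, the cover being perfect), and passing to a non-trivial irreducible summand — necessarily with central kernel — produces a non-trivial irreducible projective representation of $S$ of dimension at most $\dim \rho$. It therefore suffices to bound such an irreducible representation from below, and for this one quotes the main theorem of \cite{LanSei}: there is a function $f_Y$, tabulated there, such that every non-trivial irreducible projective $\overline{\mathbb{F}_q}$-representation of $Y(r)$ in characteristic $\ne p_r$ has dimension at least $f_Y(r)$. (The finitely many small configurations excluded in \cite{LanSei} involve $r$ and $\lvert S \rvert$ both bounded, and can be absorbed into the constants $c_1$, $c_2$ at the end.)

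What is then left is a finite verification: for each type $Y$ one checks that $f_Y(r) \ge c_1 r^{c_2 m}$, where $m = m(Y,p_r)$ is the defining-characteristic minimal dimension of Theorem \ref{minrepdegreethm} (exactly the quantity denoted $m$ in the statement). For the classical families the exponent of $r$ in $f_Y(r)$ is linear in the rank and bounded below by a fixed positive multiple of $m$ — for instance $f_{A_l}(r) \ge (r^{l}-1)/(r-1)$ against $m = l+1$, and $f_{C_l}(r) \ge \tfrac{1}{2}(r^{l}-1)$ against $m = 2l$, the odd-characteristic symplectic groups being the tightest case — so a single choice of $c_2$ serves all of them. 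For the exceptional families, including the Suzuki and Ree families, $m$ is bounded by an absolute constant (at most $248$, for $E_8$), while $f_Y(r)$ always grows at least like $r^{1/2}$, so there any sufficiently small $c_2$ works. Choosing $c_1, c_2 > 0$ small enough to accommodate all the finitely many families yields the claim. I expect the only real labour here to be the bookkeeping in this last step — confirming that one absolute $c_2$ can be extracted uniformly from the table of \cite{LanSei} once it is matched against Theorem \ref{minrepdegreethm} — rather than any substantive mathematical obstacle.
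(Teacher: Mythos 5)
Your proposal is correct and matches the paper's treatment: the paper gives no argument beyond observing that the bound ``may be read off directly from the table in the main result of \cite{LanSei}'', which is exactly the reduction-plus-finite-bookkeeping you describe. Your elaboration of the routine reductions and the type-by-type comparison with Theorem \ref{minrepdegreethm} is sound (and the odd-characteristic symplectic case is indeed the tightest among the unbounded-rank families).
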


We use Theorem \ref{LandSeitThm} to bound the order of $S$ 
by an explicit function of $n=n(X,p)$ alone. 
All such $S$ will therefore satisfy a sufficiently short law 
by Theorem \ref{allgrpsthm}. 
For $\log_r (n/c_1) \geq c_2 m$ by Theorem \ref{LandSeitThm}, 
so that: 
\begin{center}
$\log_2 (n/c_1) \log_r (n/c_1)/ c_2 ^2 
\geq \log_r (n/c_1)^2 / c_2 ^2 
\geq m^2$
\end{center}
(since $2 \leq r$) and: 
\begin{center}
$(n/c_1)^{\log_2 (n/c_1)/ c_2 ^2} \geq r^{m^2} \geq \lvert S \rvert$
\end{center}
as required. 

Finally, suppose $S = Y(p^{\mu}) \in \Lie (p)$. 
The representations of almost simple groups of Lie type 
in defining charactistic were studied by Liebeck \cite{Liebeck}, 
building on work of Donkin \cite{Donkin},  
as a key step in bounding the orders of non-geometric 
maximal subgroups in classical groups. 
Recall that $n(Y,p)$ is the minimal dimension of a faithful 
irreducible projective $\overline{\mathbb{F}_p} S$-module 
(consult Subsection \ref{repsubsubsect} for the value of $n(Y,p)$ 
for each possible $S$). 

\begin{thm}[\cite{Donkin}, \cite{Liebeck} Theorem 2.1-2.3] \label{LiebThm1}
Let $S = Y(p^{\mu})$ be a finite simple group of Lie type $Y$. 
Let $M$ be a faithful absolutely irreducible projective $S$-module 
over $\mathbb{F}_{p^{\lambda}}$. Then: 
\begin{center}
$\dim (M) \geq n(Y,p) ^{\mu/(\lambda,\mu)}$. 
\end{center}
\end{thm}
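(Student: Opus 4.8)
The plan is to reduce the whole statement to Steinberg's tensor product theorem together with a short counting argument. First I would pass from $S$ to the finite reductive group $\mathbf{G}^F$, where $\mathbf{G}$ is the simply connected simple algebraic group over $\overline{\mathbb{F}_p}$ of type $Y$, $F$ is a Frobenius endomorphism with $\mathbf{G}^F/Z(\mathbf{G}^F) \cong S$, and $\mathbb{F}_{p^\mu}$ is the fixed field of the associated field automorphism. A faithful absolutely irreducible projective $S$-module over $\mathbb{F}_{p^\lambda}$ lifts (away from the finitely many exceptional-multiplier cases, which I would dispose of by hand) to an absolutely irreducible $\mathbb{F}_{p^\lambda}\mathbf{G}^F$-module $M$ on which $Z(\mathbf{G}^F)$ acts by scalars, with $M$ nontrivial; since $S$ is simple, nontriviality already forces the associated projective representation of $S$ to be faithful, so the faithfulness hypothesis costs nothing beyond $M$ being nontrivial. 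Write $L = M \otimes_{\mathbb{F}_{p^\lambda}} \overline{\mathbb{F}_p}$, an irreducible $\overline{\mathbb{F}_p}\mathbf{G}^F$-module.

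Next I would invoke Steinberg: the irreducible $\overline{\mathbb{F}_p}\mathbf{G}^F$-modules are parametrised by the $p^\mu$-restricted dominant weights $\nu = \sum_{i=0}^{\mu-1} p^i \nu_i$ with each $\nu_i$ $p$-restricted (for twisted $Y$ one uses the corresponding restricted-weight parametrisation), and on restriction to $\mathbf{G}^F$ we have $L(\nu) \cong \bigotimes_{i=0}^{\mu-1} L(\nu_i)^{[i]}$, hence $\dim L(\nu) = \prod_{i=0}^{\mu-1} \dim L(\nu_i)$. I would also record that, across all $\mu$, the minimum of $\dim L(\nu)$ over nontrivial $\nu$ is attained when exactly one $\nu_i$ is a minimising nonzero restricted weight and the others vanish, and that this minimum equals $n(Y,p)$: this is the identification underlying the tables of \cite{Liebeck}, using that a nontrivial restricted irreducible $\mathbf{G}$-module remains irreducible and projectively faithful on restriction to $\mathbf{G}^F$.

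The crux is then the field-of-definition constraint. Since $L$ comes from a module over $\mathbb{F}_{p^\lambda}$ it is fixed by the Galois automorphism $x \mapsto x^{p^\lambda}$; because the modules $L(\nu)$ of $\mathbf{G}$ are defined over $\mathbb{F}_p$, this Galois twist agrees on $\mathbf{G}^F$ with the $\lambda$-fold Frobenius twist, which in turn acts on the parametrising data by the cyclic shift of the $\mu$-tuple $(\nu_i)_{i \in \mathbb{Z}/\mu}$ by $\lambda$ places (here one uses that $F$ acts trivially on $\mathbf{G}^F$, so the $\mu$-fold twist is the identity there). Invariance under shifting by $\lambda$ modulo $\mu$ forces $\nu_i$ to depend only on $i$ modulo $d := (\lambda,\mu)$, so the tuple $(\nu_i)$ is a block of length $d$ repeated $\mu/d$ times, whence $\dim L = \prod_{j=0}^{d-1}\big(\dim L(\nu_j)\big)^{\mu/d}$. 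As $L$ is nontrivial some $\nu_j \neq 0$, so $\dim L(\nu_j) \geq n(Y,p)$ while $\dim L(\nu_i) \geq 1$ for the remaining indices; multiplying the blocks gives $\dim M = \dim L \geq n(Y,p)^{\mu/(\lambda,\mu)}$, as required.

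I expect the main obstacle to be the twisted types, where $\mathbf{G}^F$ is a Steinberg, Suzuki or Ree group: there $F$ carries a graph-automorphism component, so the correct form of the tensor product theorem, the catalogue of which Frobenius twists are genuinely available, and the exact value of $n(Y,p)$ (especially over the small fields on which the Suzuki and Ree groups are defined) all require a careful treatment type by type. This is precisely the content of Donkin's and Liebeck's work \cite{Donkin, Liebeck}; granting it, the counting argument above applies verbatim with $\mu$ the degree of the fixed field of $F$ and $d = (\lambda,\mu)$, and delivers the stated bound.
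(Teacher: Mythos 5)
The paper does not actually prove this statement: it is quoted wholesale from Donkin and Liebeck (Theorem 2.1--2.3 of \cite{Liebeck}), so there is no internal proof to compare against. Your reconstruction is the standard argument behind that citation, and it is sound. The initial reductions are fine: passing from a faithful absolutely irreducible projective $\mathbb{F}_{p^{\lambda}}S$-module to an absolutely irreducible linear module of the same dimension for a perfect central extension inside $\GL_m(p^{\lambda})$ costs nothing (take the derived subgroup of the preimage of the image of $S$ in $\PGL_m(p^{\lambda})$), the generic multiplier in defining characteristic is the centre of the simply connected group, and for simple $S$ nontriviality already gives projective faithfulness. The core step is also correct: Steinberg's tensor product theorem writes the extension of scalars as $\bigotimes_{i} L(\nu_i)^{[i]}$ for a $\mu$-tuple of restricted weights, definability over $\mathbb{F}_{p^{\lambda}}$ forces the tuple to be invariant under the cyclic shift by $\lambda$, hence periodic with period $d=(\lambda,\mu)$ (the subgroup of $\mathbb{Z}/\mu$ generated by $\lambda$ is generated by $d$), and a nonzero entry then recurs $\mu/d$ times, each occurrence contributing a factor of at least $n(Y,p)$ to the dimension. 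The only genuinely incomplete part is the one you flag yourself: for the Steinberg, Suzuki and Ree types the parametrisation of the irreducibles of $\mathbf{G}^F$, the effect of the Galois twist on the weight data, and the identification of the minimal nontrivial restricted dimension with $n(Y,p)$ all require the type-by-type analysis that occupies Liebeck's Section 2 and Donkin's paper; deferring that to the cited sources is exactly what the paper itself does, so your proposal is an acceptable account of the proof rather than a self-contained one.
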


Recall that $G = X(q)$ is a classical group in characteristic $p$. 
Let $n(X,p)$ be as in Corollary \ref{mainthm}. 
Note that complete lists of maximal subgroups 
of $G$ are known for $n(X,p) \leq 12$ and are recorded in 
\cite{BrHoRD} Tables 8.1-85. 
The possibilities for $S$ being read off from these tables, 
it may be verified that there is a law of the required length 
satisfied by all of them, using our induction hypothesis 
and Lemma \ref{UnionLemma}. 
We therefore henceforth assume that $n(X,p) \geq 13$. 
Write $p^{\lambda} = q^2$ if $X = {^2}A_l$ 
and $p^{\lambda} = q$ otherwise. 

First suppose that $\mu/(\lambda,\mu) = t \geq 2$. 
Then $\mu \leq \lambda t$ and (by Theorem \ref{LiebThm1}) 
$n(X,p) \geq n(Y,p) ^t$. 
By induction, $S$ satisfies a law of length: 
\begin{center}
$O_Y (p^{\mu \lfloor n(Y,p)/2 \rfloor} \log (q)^{O_Y(1)})$
\end{center} 
(since $a(Y,p) \leq \lfloor n(Y,p)/2 \rfloor$) 
which is acceptable since: 
\begin{center}
$\mu \lfloor n(Y,p)/2 \rfloor 
\leq \lambda t \lfloor n(X,p)^{1/t}/2 \rfloor 
\leq a(X,p) \lambda$
\end{center}
for $X \neq {^2}A_{l}$ and: 
\begin{center}
$\lambda t \lfloor n(X,p)^{1/t}/2 \rfloor \leq a(X,p) \lambda/2$
\end{center}
for $X = {^2}A_{l}$, since $n(X,p) \geq 13$. 
Moreover $n(Y,p) \geq 2$, 
so $t \leq \log_2 (n(X,p))$ and $\lambda \leq 2 \log_p (q)$, 
and the total number of possibilities for $S$ is at most: 
\begin{center}
$O \big(\log_p (q) \sum_{t=2} ^{\log_2 (n(X,p))} t n(X,p)^{1/t} \big)$
\end{center}
so by Lemma \ref{UnionLemma} we have a law of the required 
length satisfies by all such $S$
(the factor of $\log_p (q)^2$ introduced when we apply 
Lemma \ref{UnionLemma} is acceptable, 
since $n(Y,p) \leq \sqrt{n(X,p)}$, so that $Y \neq X$). 

Therefore we may suppose $\mu/(\lambda,\mu) =1$, so $\mu\mid\lambda$. 
Write $\lambda=\mu r$ for $r \in \mathbb{N}$. 
First, for $r \geq 12$ we may argue as in the case of 
subfield levels in geometric subgroups: 
by Proposition \ref{maxorderlawpropn} 
there is a word of length $O(q^{a(X,p)})$ 
which is a law for all possible $S$. 

Next suppose $2 \leq r \leq 11$. 
Then by induction $S$ satisfies 
a law of length: 
\begin{center}
$O_Y (p^{\mu \lfloor n(Y,p)/2 \rfloor} \log (q)^{O_Y(1)})$
\end{center} 
which is acceptable since 
$\mu \lfloor n(Y,p)/2 \rfloor\leq\lambda\lfloor n(X,p)/2 \rfloor /2$
and $\lambda \lfloor n(X,p)/2 \rfloor /2 \leq a(X,p)\lambda$
for $X \neq {^2}A_{l}$ while
$\lambda\lfloor n(X,p)/2 \rfloor /2 = a(X,p)\lambda/2$ 
for $X = {^2}A_l$. 
Moreover at most $O(n(X,p))$ groups $S$ occur in this case, 
so by Lemma \ref{UnionLemma} 
we have an acceptable law for all of them. 

Finally suppose $r=1$, so that $\lambda=\mu$. 
Once again, recall that $\tilde{S}$ is a central extension 
of $Y(q)$ (respectively $Y(q^2)$ if $X = {^2}A_l$). 
Since $n(Y,p) \leq n(X,p)$, at most boundedly many 
$Y$ arise for each $X$. 
It remains to prove that $Y(q) \preceq X(q)$ 
(respectively $Y(q^2) \preceq X(q)$)
and $a(Y,p) \leq a(X,p)$ (respectively $2a(Y,p) \leq a(X,p)$), 
from which it follows by Lemma \ref{UnionLemma} there is a sufficiently short law satisfied by all $S$ which occur. 

We may easily exclude the case of $Y$ exceptional. 
Indeed $X$ is classical and $n(X,p) \geq 13$, 
so if $Y$ is exceptional with $n(Y,p) \leq n(X,p)$, 
then $Y(q) \preceq X(q)$ 
(respectively $Y(q^2) \preceq X(q)$) 
by Definition \ref{posetdefn} (ii), 
and it may be seen by inspection of Table \ref{table:lawstable} 
above that $2 a(Y,p) \leq a(X,p)$. 

Therefore suppose $Y$ to be classical. 
We recall some more information from \cite{Liebeck} 
on the possible dimensions of irreducible 
modular representations of $\tilde{S}$. 
Let $K$ be the splitting field of $\tilde{S}$ 
(so that $K = \mathbb{F}_{p^{2\mu}}$ in the case 
$Y = {^2}A_l$ or ${^2}D_l$ and 
$K = \mathbb{F}_{p^{\mu}}$ otherwise). 

\begin{thm}[\cite{Liebeck} Theorem 1.1] \label{LiebModDimBounds}
Let $M$ be an irreducible $K \tilde{S}$-module. 
Then either (i) $\dim_K (M) = n(Y,p)$ 
(ii) $\dim_K (M) \geq n(Y,p)^2 / 2$ or (iii) 
one of the following holds: 
\begin{itemize}
\item[(a)] $\dim_K (M)$ is as in the following table: 

\begin{center}
\small
\begin{tabular}{|c|c|}
\hline 
$X$ & $\dim_K (M)$ \\ 
\hline 
$A_l$ or $^2A_l$ & $l(l+1)/2$ \\ 
\hline 
$B_l$ & $l(2l+1)$ \\ 
\hline 
$C_l$ & $\begin{array}{cc} 
l(2l-1)-2 & \text{if }p \mid l \\ 
l(2l-1)-1 & \text{if }p \nmid l \end{array}$ \\ 
\hline 
$D_l$ or $^2D_l$ & $\begin{array}{cc} 
l(2l-1) & \text{ for $p$ odd} \\ 
l(2l-1)-1 & \text{ for $p=2$, $l$ odd} \\ 
l(2l-1)-2 & \text{ for $p=2$, $l$ even}\end{array}$ \\ 
\hline 
\end{tabular} 
\end{center}

\item[(b)] $Y = B_l$ or $C_l$, $2 \leq l \leq 6$ 
and $\dim_K (M) = 2^l$; 
\item[(c)] $Y = D_l$ or $^2D_l$, $4 \leq l \leq 7$ 
and $\dim_K (M) = 2^{l-1}$; 
\item[(d)] $Y = C_3$ and $\dim_K (M) = 14$. 
\end{itemize}
\end{thm}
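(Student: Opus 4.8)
Since this is Theorem~1.1 of \cite{Liebeck}, the plan is to quote it; no independent argument is needed here. For orientation, though, here is the shape such a proof takes, and why we do not reproduce it.

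One first reduces to a \emph{restricted} irreducible module. By Steinberg's tensor product theorem every irreducible $K\tilde{S}$-module is a tensor product of Frobenius twists of restricted irreducibles $L(\lambda)$, with $\lambda$ a $p$-restricted dominant weight; a tensor product of $t \geq 2$ nontrivial twists has dimension at least $n(Y,p)^{2}$ (using that the minimal dimension of a nontrivial restricted irreducible is $n(Y,p)$, bar a short list of small cases treated directly), which lands in alternative~(ii). So one may assume $M = L(\lambda)$ is restricted, and the task becomes: enumerate the $p$-restricted $\lambda$ with $\dim L(\lambda)$ at most about $n(Y,p)^{2}/2$.

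One then bounds $\dim L(\lambda)$ from below by the combinatorics of $\lambda$. Weyl's dimension formula applied to the Weyl module $V(\lambda)$ (of which $L(\lambda)$ is the head) already grows rapidly in $\lambda$, and Premet's theorem, under a mild hypothesis on $p$, tells us that the weights of $L(\lambda)$ fill out every dominant weight below $\lambda$ in the relevant coset of the root lattice, so the dimension cannot collapse too far. Either way, smallness forces $\lambda$ to be a very small combination of fundamental weights attached to the ends of the Dynkin diagram. This drives one to the natural module ($\lambda = \varpi_{1}$, dimension $n(Y,p)$); its exterior square ($\lambda = \varpi_{2}$, dimension $l(2l\pm 1)$ up to a correction of $1$ or $2$ in small characteristic, giving table~(a)); the spin modules in types $B$, $C$, $D$ (end-node weights, dimension $2^{l}$ or $2^{l-1}$, small only for bounded $l$, giving (b) and (c)); and a handful of low-rank coincidences such as the $14$-dimensional module for $Y = C_{3}$ in (d).

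The real obstacle --- and the reason we cite \cite{Liebeck} rather than reprove this --- is the exact bookkeeping: pinning down the precise dimensions in defining characteristic, in particular the subtractions of $1$ or $2$ in the $\varpi_{2}$-type modules and the exact ranges of $l$ in (b)--(d), requires the full apparatus of modular representation theory of reductive groups (the Jantzen sum formula, known decomposition numbers, and L\"ubeck's explicit computations in small rank). For our purposes only the qualitative dichotomy matters --- a faithful irreducible $K\tilde{S}$-module is either the natural module or has dimension bounded below by a fixed power of $n(Y,p)$, with finitely many exceptions per rank --- and that is all we will extract from the theorem.
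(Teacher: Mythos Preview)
Your proposal is correct and matches the paper exactly: the paper also simply cites this as Theorem~1.1 of \cite{Liebeck} without giving any proof, using it purely as a black box in the subsequent argument. Your additional sketch of how Liebeck's proof proceeds is accurate and helpful context, but since neither you nor the paper attempts an independent proof, there is nothing to compare.
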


Suppose first that $n(Y,p) < n(X,p)$ 
(so that Definition \ref{posetdefn} (i) applies). 
Since the action of $\tilde{S}$ on the natural module $V$ 
for $G$ is absolutely irreducible, 
$M = V \otimes_{\mathbb{F}_{p^{\lambda}}} K$ is an irreducible 
$K \tilde{S}$-module satisfying $\dim_K (M) > n(Y,p)$. 
Thus $n(X,p) = \dim_K (M)$ is as in Theorem \ref{LiebModDimBounds} 
(ii) or (iii). 
We can now verify by brute computation 
(and using Theorem \ref{minrepdegreethm}) that $2 a(Y,p) \leq a(X,p)$ 
for $X$ of type $^2A_l$ and $a(Y,p) \leq a(X,p)$ otherwise. 
These computations are greatly expedited by the observation that 
$a(Y,p) \leq \lfloor n(Y,p)/2 \rfloor$, 
$a(X,p) \geq \lfloor n(X,p)/2 \rfloor-2$ (since $X$ is classical) 
and $a({^2A_l},p) = \lfloor (l+1)/2 \rfloor 
= \lfloor n({^2}A_l,p)/2 \rfloor$. 

We may therefore assume $n(X,p)=n(Y,p)$. 
There are elementary methods which could exclude 
most possibilities for $Y$ which would be potential 
obstructions to the conclusion of Theorem \ref{strongmainthm}. 
However we have found that the most efficient way of 
excluding all such $Y$ simultaneously is to use the 
conclusion of Liebeck's investigations as a black box. 

\begin{thm}[\cite{Liebeck} Theorem 4.1] \label{LiebeckOrderBd}
Let $G = X(q)$ for $X$ of classical type with natural module 
of dimension $n$. Let $H$ be a maximal subgroup of $G$. 
Then one of the following holds. 
\begin{itemize}
\item[(i)] $H$ lies in $\mathcal{C}_1 - \mathcal{C}_8$ 
(as described in Theorem \ref{structurethmclassical}) 
and $X \neq D_4$; 
\item[(ii)] $H \in \lbrace \Alt(n+1),\Sym(n+1),\Alt(n+2),\Sym(n+2) \rbrace$;
\item[(iii)] $\lvert H \rvert < q^{3 n}$ 
(respectively $\lvert H \rvert < q^{6 n}$ for $X = {^2} A_l$). 
\end{itemize}
\end{thm}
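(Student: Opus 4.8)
The statement is Theorem 4.1 of \cite{Liebeck}, so in the paper one simply cites it; were I to reconstruct the proof, the plan would be to play the Classification of Finite Simple Groups against the dichotomy furnished by Aschbacher's Theorem. First I would apply Theorem \ref{structurethmclassical}: a maximal subgroup $H \leq G$ either lies in one of the geometric classes $\mathcal{C}_1$--$\mathcal{C}_8$, or is almost simple with socle $S$ whose full covering group acts absolutely irreducibly on the natural module $V$ of dimension $n$. In the geometric case conclusion (i) holds; the one genuine nuisance is $X = D_4$, where triality produces ``novelty'' maximal subgroups (copies of $\Omega_7(q)$ and $G_2(q)$, and of $\Sp_6(q)$ when $p = 2$) that are not $\mathcal{C}_i$-subgroups for the given embedding. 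Since $D_4$ is a single fixed type, I would simply exclude it from (i) and check by hand that each of its maximal subgroups satisfies (ii) or (iii). All the real work is therefore in the class-$\mathcal{S}$ case, and I would split it according to the CFSG type of $S$.

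If $S$ is sporadic, then $|H| \leq |\Aut(S)|$ is bounded by an absolute constant, so (iii) holds as soon as $q^{3n}$ exceeds that constant, and the finitely many residual triples $(S,n,q)$ are settled from the ATLAS. If $S = \Alt(m)$, I would use the Wagner-type lower bounds on dimensions of projective irreducibles (Theorem \ref{WagnerThm}): either $V$ is, up to the known exceptions, the fully deleted natural permutation module, forcing $m \in \lbrace n+1, n+2 \rbrace$ and landing in (ii); or else $m$ is small --- logarithmic in $n$ in the spin case, at most $n+2$ otherwise --- so that $|H| \leq m! < q^{3n}$ for $n$ large. If $S = Y(r)$ is of Lie type in characteristic coprime to $q$, the Landazuri--Seitz bound (Theorem \ref{LandSeitThm}) gives $n \geq c_1 r^{c_2 m_0}$ with $m_0$ the natural dimension of $Y(r)$, whence $|S| \leq r^{m_0^2}$ is bounded by a function of $n$ alone and (iii) again holds for $n$ large.

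The substantial case --- and the one I expect to be the main obstacle --- is $S = Y(p^{\mu})$ of Lie type in the defining characteristic $p$. Writing $p^{\lambda} = q^2$ if $X = {^2}A_l$ and $p^{\lambda} = q$ otherwise, so that $V$ is an absolutely irreducible $\mathbb{F}_{p^{\lambda}}$-module for the covering group of $S$, I would set $t = \mu/(\lambda,\mu)$. The Steinberg tensor product theorem combined with Donkin's bounds on dimensions of Weyl modules --- which are exactly what Theorems \ref{LiebThm1} and \ref{LiebModDimBounds} package --- give a trichotomy: $\dim V = n(Y,p)$; or $\dim V \geq n(Y,p)^{t}$ with $t \geq 2$; or $\dim V \geq n(Y,p)^2/2$, or $\dim V$ lies in a short list of small-rank exceptional values ($2^l$, $2^{l-1}$, $14$). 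In the first branch a Frobenius-twist argument forces the covering group of $S$ to be, after the appropriate field extension, a classical or subfield subgroup acting on its natural module, so $H$ is in fact geometric --- contradicting $H \in \mathcal{S}$. In the remaining branches I would combine $|Y(p^{\mu})| < p^{\mu c\, n(Y,p)^2}$ (with $c$ a small absolute constant) with $\mu \leq \lambda t$ to get $\log_q |S| \leq c\, t\, n(Y,p)^2$; since $t \mapsto t\, n^{2/t}$ decreases for $t \geq 2$, this is comfortably below $3n$ (below $6n$ for $X = {^2}A_l$), so that $|H| \leq |\Out(S)| \cdot |S| < q^{3n}$. The short list of exceptional dimensions, together with all possibilities arising when $n \leq 12$, is a fixed finite collection to be dispatched by direct computation against the known lists of maximal subgroups.

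The hard part, then, is confined to the defining-characteristic case: one must deploy the full force of the modular representation theory (Donkin's dimension estimates, recast through Liebeck's theorems), keep careful track of the field over which $V$ is realised versus the fields appearing in the Steinberg decomposition of the representation, and then verify the numerical inequality uniformly in $q$ --- the constants $3$ and $6$ being precisely calibrated to the quadratic lower bound $\dim V \gtrsim n(Y,p)^2$ and the bound $\dim V \geq n(Y,p)^{t}$. Isolating the $D_4$ triality exception and clearing the scattered small-rank and small-$n$ special cases is routine but laborious.
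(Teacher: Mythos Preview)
You are correct that the paper does not prove this statement: it is quoted verbatim as Theorem 4.1 of \cite{Liebeck} and used as a black box in the analysis of class-$\mathcal{S}$ subgroups. Your reconstruction of Liebeck's argument is a reasonable outline of how that proof actually proceeds, but it goes well beyond what the present paper does, which is simply to cite the result.
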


Cases (i) and (ii) of Theorem \ref{LiebeckOrderBd} 
having been dealt with above, 
we may assume that the order of $S$ satisfies the 
upper bound from Theorem \ref{LiebeckOrderBd} (iii). 
This will contradict 
the following lower bound on the orders of classical groups. 

\begin{thm} \label{ClassicalGroupOrderThm}
Let $Y$ be of classical type. Then: 
\begin{center}
$\lvert Y (q)\rvert = \Omega_Y (q^{(n(Y,p)^2-n(Y,p))/2})$. 
\end{center}
\end{thm}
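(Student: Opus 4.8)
The plan is to establish the lower bound $\lvert Y(q) \rvert = \Omega_Y(q^{(n(Y,p)^2 - n(Y,p))/2})$ by explicit comparison of the order formulas for the finite classical groups with the values of $n(Y,p)$ recorded in Subsection \ref{repsubsubsect}. Since $Y$ is of classical type, it is one of $A_l$, $^2A_l$, $B_l$, $C_l$, $D_l$ or $^2D_l$, and in each case the order of $Y(q)$ (or of the corresponding simple group, which differs from the order of the ambient matrix group by a factor bounded in terms of the Lie rank alone, hence absorbed into the $\Omega_Y$) is a known product of the shape $q^N \prod_i (q^{d_i} \pm 1)$, where $N$ is the number of positive roots and the $d_i$ are the degrees of the invariants. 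The product $\prod_i (q^{d_i} \pm 1)$ is $\Omega_Y(q^{\sum_i d_i})$ trivially (each factor is at least $q^{d_i}/2$, and there are boundedly many factors), so $\lvert Y(q) \rvert = \Omega_Y(q^{N + \sum_i d_i})$, and it remains to check that $N + \sum_i d_i \geq (n(Y,p)^2 - n(Y,p))/2 = \binom{n(Y,p)}{2}$ in every case.

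The key step is therefore the case-by-case verification. For $X = A_l$ we have $n = l+1$ and $\lvert \PSL_{l+1}(q) \rvert = \Omega_l\big( q^{l(l+1)/2} \prod_{i=2}^{l+1}(q^i-1) \big) = \Omega_l\big(q^{l(l+1)/2 + (l+3)l/2}\big) = \Omega_l(q^{l(l+2)})$, which certainly dominates $\binom{l+1}{2} = l(l+1)/2$; in fact one has $\lvert \SL_{l+1}(q) \rvert \sim q^{(l+1)^2 - 1}$, comfortably larger than $q^{\binom{l+1}{2}}$. The same crude estimate — that a classical group of natural dimension $n$ has order of the rough magnitude $q^{n^2}$ up to a factor polynomial in $q$ of degree $O(n)$, while its minimal projective dimension is $n$ or (in small characteristic) $n-1$ or $n-2$ — settles the symplectic, unitary and orthogonal cases identically: one has $\lvert \Sp_{2l}(q) \rvert \sim q^{2l^2+l}$, $\lvert \GU_{l+1}(q) \rvert \sim q^{(l+1)^2}$, $\lvert \mathrm{GO}^\epsilon_n(q)\rvert \sim q^{n(n-1)/2}$-ish, and in every instance the exponent is at least $\binom{n(Y,p)}{2}$ since $n(Y,p) \leq \dim(V) = n$. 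The only cases requiring a moment's care are those where $n(Y,p) = \dim(V)$ exactly with $\dim(V)$ largest relative to the Lie rank, namely $Y = A_l$ or $^2A_l$ with $n(Y,p) = l+1$: there one uses that $\lvert Y(q)\rvert$ has order of magnitude $q^{(l+1)^2}$ (up to a bounded factor and a $q^{O(l)}$ factor) while $\binom{l+1}{2} = \frac{(l+1)l}{2} < (l+1)^2$, so the inequality holds with room to spare.

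The main obstacle — really the only subtlety — is bookkeeping around the distinction between the simple group $Y(q)$, its universal cover, and the ambient isometry or similarity group, together with the handful of small-rank exceptions where $n(Y,p)$ drops below $\dim(V)$ (spin representations in types $B_l$, $C_l$, $D_l$ for small $l$, and the like), as catalogued in Theorems \ref{minrepdegreethm} and \ref{LiebModDimBounds}. However, all of these central-extension and small-rank phenomena change $\lvert Y(q) \rvert$ only by a factor bounded in terms of the Lie rank of $Y$, hence in terms of $n(Y,p)$, and therefore disappear into the constant implied by $\Omega_Y$. Likewise, whenever $n(Y,p)$ is strictly less than $\dim(V)$ the target exponent $\binom{n(Y,p)}{2}$ only decreases, so such cases are \emph{easier}, not harder; the tightest instances are precisely the linear and unitary ones handled above, and even there the inequality $\binom{n}{2} < n^2 - 1 \leq $ (exponent in $\lvert Y(q)\rvert$) is strict. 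No delicate estimate is needed: the gap between $q^{\binom{n}{2}}$ and the true order $\approx q^{n^2}$ is enormous, so the only work is confirming that one has correctly matched each type $Y$ to its order formula and its minimal dimension $n(Y,p)$.
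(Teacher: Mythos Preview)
Your approach is the same as the paper's, which simply cites standard references (Carter, Wilson) for the order formulas and leaves the comparison implicit. So in spirit you are doing exactly what the paper intends, only with more detail.

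That said, your commentary contains a factual slip worth correcting. You write that the true order is $\approx q^{n^2}$ and that the tightest cases are $A_l$ and ${}^2A_l$. This is backwards. For the orthogonal groups the order is not of magnitude $q^{n^2}$ but of magnitude $q^{n(n-1)/2}$: for instance $\lvert B_l(q)\rvert \sim q^{2l^2+l}$ with $n(Y,p)=2l+1$, so $\binom{2l+1}{2}=2l^2+l$ exactly; likewise $\lvert D_l(q)\rvert, \lvert {}^2D_l(q)\rvert \sim q^{2l^2-l}$ with $n(Y,p)=2l$ and $\binom{2l}{2}=2l^2-l$ exactly. So the orthogonal types are the genuinely tight cases, and the ``enormous gap'' you describe is zero there. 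This does not break the argument --- the claimed lower bound still holds, with equality --- but the sentence asserting that only $A_l$ and ${}^2A_l$ need care, and that elsewhere the gap is large, should be deleted or reversed.
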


\begin{proof}
The orders of the finite simple groups of Lie type 
are computed in many places, for instance \cite{Carter,Wilson}. 
\end{proof}

Since $n(X,p)=n(Y,p)\geq 13$, $3 n(X,p) < (n(Y,p)^2-n(Y,p))/2$. 
Combining Theorem \ref{LiebeckOrderBd} (iii) with 
Theorem \ref{ClassicalGroupOrderThm}, 
$\Omega_Y (q^{(n(Y,p)^2-n(Y,p))/2}) =\lvert Y(q)\rvert < q^{3n(X,p)}$ 
(respectively 
$\Omega_Y (q^{n(Y,p)^2-n(Y,p)}) =\lvert Y(q^2)\rvert < q^{6n(X,p)}$ 
for $X={^2}A_l$). This is a contradiction 
for $q$ sufficiently large depending on $n(X,p)$. 
This concludes the last case of the proof of Proposition \ref{nongeomprop}. 

\begin{rmrk}
For the sake of Remark \ref{withbab}, 
the dependence of the implied constant in 
Theorem \ref{ClassicalGroupOrderThm} may be made explicit. 
\end{rmrk}

It is important to note that the work of subection \ref{GenSect} 
and Section \ref{NonGenSect} up to this point do 
not by themselves 
constitute a proof of the upper bound in Theorem \ref{strongmainthm} 
for the classical groups. This is because 
a classical group may contain a large exceptional group 
lying prior to it in our induction, 
so that we must assume Theorem \ref{strongmainthm} for the 
exceptional subgroup in order to proceed. 
This means that our inductive argument in the proof of 
the upper bound in
Theorem \ref{strongmainthm} must handle exceptional and classical 
groups simultaneously. 

\subsection{Exceptional Groups}

The broad shape of maximal subgroups of exceptional groups of Lie type 
is very similar to that for the classical groups, 
though the Aschbacher classes $\mathcal{C}_1$-$\mathcal{C}_8$ 
are not formally defined for the exceptional groups. 
Once again, every maximal subgroup has a short subnormal 
series, all the factors of which are either 
of small order; abelian, or a group of Lie type 
for which we may assume a sufficiently short law 
exists by induction. 

\begin{propn} \label{excthm}
Let $G = X (q)$, $a(X,p)$ be as in Theorem \ref{strongmainthm}, 
with \linebreak$X \in 
\lbrace E_6,E_7,E_8,F_4,G_2,{^3}D_4,{^2}E_6,{^2}B_2,{^2}F_4,{^2}G_2\rbrace$. 
Suppose that the conclusion of Theorem \ref{strongmainthm} 
holds for all groups $H$ satisfying $H \prec G$ 
as in Definition \ref{posetdefn}. 
Then there is word in $F_2$ of length $O(q^{a(X,p)} \log (q)^{O(1)})$ 
which is a law for all maximal subgroups $M$ of $G$. 
\end{propn}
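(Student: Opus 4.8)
The plan is to follow the strategy used for the classical groups in Propositions~\ref{geompropn} and~\ref{nongeomprop}, with the role of Aschbacher's Theorem taken over by the classification of maximal subgroups of exceptional groups of Lie type: the surveys \cite{LieSeiSurv} and the references therein for the untwisted types and ${}^2E_6$, together with \cite{Kleid3D4} for ${}^3D_4$, \cite{KleidG2} for $G_2$, \cite{Malle} for ${}^2F_4$, and \cite{BrHoRD} for ${}^2B_2$ and ${}^2G_2$. The key point extracted from this literature is that, for each fixed exceptional $X$, every maximal subgroup $M$ of $G = X(q)$ belongs to one of \emph{boundedly many} families (the bound depending only on $X$): parabolic subgroups; normalisers of positive-dimensional closed subgroups of the ambient algebraic group (those of maximal rank, and a bounded list of others not of maximal rank); normalisers of exotic $r$-local subgroups; subfield subgroups $X(q_0)$; and almost simple subgroups $S \leq M \leq \Aut(S)$. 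For each family I would produce a law of length $O_X(q^{a(X,p)}\log(q)^{O_X(1)})$ and combine these laws by Lemma~\ref{UnionLemma}; within a family, $M$ has a subnormal series of bounded length whose successive quotients are treated individually and recombined by Lemma~\ref{ExtnLemma}.

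The quotients that occur are all of shapes already handled. The unipotent radical of a parabolic is a $p$-group of nilpotency class bounded in terms of the rank of $X$, hence satisfies a law of bounded length by Example~\ref{solubleex}. A Levi factor, or the normaliser of a positive-dimensional subgroup, is --- up to abelian and bounded-order quotients --- a central product $\prod_i Y_i(q^{\lambda_i})$ of groups of Lie type coming from the subsystems of the root system of $X$ or from the bounded list of maximal connected subgroups (and this product is empty in the case of a torus normaliser, which is then abelian-by-bounded, handled by Lemma~\ref{ExtnLemma} and Example~\ref{basiclawex}); in every non-trivial case $n(Y_i,p) < n(X,p)$, so $Y_i(q^{\lambda_i}) \prec G$ by Definition~\ref{posetdefn}(i), and one checks case by case that $\lambda_i\, a(Y_i,p) \leq a(X,p)$, so that the inductively provided law for $Y_i(q^{\lambda_i})$ has length $O_X(q^{a(X,p)}\log(q)^{O_X(1)})$ as required. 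Exotic $r$-local subgroups occur in finitely many isomorphism types for each $X$ and have bounded order, so are dealt with by Theorem~\ref{allgrpsthm}. For the subfield subgroups $X(q_0)$, I would argue exactly as for the subfield levels in Proposition~\ref{geompropn}: the induction hypothesis (via Definition~\ref{posetdefn}(iii)) when $[\mathbb{F}_q : \mathbb{F}_{q_0}]$ is bounded, and Proposition~\ref{maxorderlawpropn} when it is large.

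For an almost simple maximal subgroup $S \leq M \leq \Aut(S)$, I would argue as in Proposition~\ref{nongeomprop}: since $M/S \leq \Out(S)$ is soluble of derived length at most $3$, Lemma~\ref{ExtnLemma} and Example~\ref{solubleex} reduce the task to finding a short law for $S$. By \cite{LieSeiSurv} the order of $S$ is bounded in terms of $X$ when $S$ is not a group of Lie type in characteristic $p$, and the untwisted rank of $S$ is bounded in terms of $X$ when it is; in either case only finitely many $S$ occur for each $X$. If $S$ is sporadic, alternating, or a group of Lie type in characteristic $\neq p$ --- the last by Theorem~\ref{LandSeitThm}, exactly as in the classical case --- then $\lvert S \rvert = O_X(1)$, and a law of bounded length is supplied by Theorem~\ref{allgrpsthm} (the stronger bound of Theorem~\ref{LawsforAlt} being available for the alternating case in the setting of Remark~\ref{withbab}). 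If $S = Y(q_0)$ is of Lie type in characteristic $p$, then using the minimal-dimension data recorded in Subsection~\ref{repsubsubsect} one verifies that $Y(q_0) \prec G$ and that the relevant field-extension degree times $a(Y,p)$ does not exceed $a(X,p)$, so the induction hypothesis furnishes a law for $S$ of length $O_X(q^{a(X,p)}\log(q)^{O_X(1)})$. Combining the laws obtained for all families by Lemma~\ref{UnionLemma} yields the proposition.

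The main obstacle is the case-by-case numerology behind the last two paragraphs: for each of the ten exceptional types one must verify, against the explicit tables of subsystems, maximal connected subgroups and almost simple maximal subgroups, that every group of Lie type $H = Y(q^{\lambda})$ arising as a quotient in a maximal subgroup satisfies both $H \prec G$ and the inequality $\lambda\, a(Y,p) \leq a(X,p)$ required to keep the induced law of length $O_X(q^{a(X,p)}\log(q)^{O_X(1)})$ (equality occurring in the extremal cases, for example for $E_7(q) \leq E_8(q)$). For $E_6$, $E_7$, $E_8$ and ${}^2E_6$ this appeals to the tabulated values of $n(X,p)$ and $a(X,p)$ in Table~\ref{table:lawstable} and Subsection~\ref{repsubsubsect}. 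The small-rank twisted types ${}^2B_2(q)$, ${}^2G_2(q)$ and ${}^2F_4(q)$ are comparatively easy: their maximal subgroups are listed explicitly in \cite{BrHoRD} and \cite{Malle} and consist of Borel subgroups, subfield subgroups, groups of Lie type of rank at most $2$ over $\mathbb{F}_q$ (such as $\PSL_2(q)$ in ${}^2G_2(q)$ and $\Sp_4(q)$ in ${}^2F_4(q)$), and groups of order bounded independently of $q$, so that the bounds $a = 1$, $1$, $2$ of Table~\ref{table:lawstable} follow immediately; the finitely many genuinely exceptional maximal subgroups occurring only for small $q$ (for instance in the Tits group ${}^2F_4(2)'$) contribute laws of bounded length.
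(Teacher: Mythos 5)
Your proposal is correct and follows essentially the same route as the paper: the same division of maximal subgroups into boundedly many families (parabolics, maximal-rank and other reductive normalisers, exotic local, subfield/twisted, and almost simple subgroups) drawn from the same sources, the same treatment of each subnormal factor (Example \ref{solubleex} for unipotent radicals, Theorem \ref{allgrpsthm} for bounded-order pieces, induction via $\prec$ with the verification $\lambda_i\,a(Y_i,p)\leq a(X,p)$ for the Lie-type pieces, Proposition \ref{maxorderlawpropn} for large-index subfield subgroups), and the same recombination via Lemmas \ref{ExtnLemma} and \ref{UnionLemma}. The case-by-case numerology you flag as the remaining work is exactly what the paper carries out against the tables of \cite{LieSaxSei}, \cite{LieSeiSurv}, \cite{Kleid3D4}, \cite{KleidG2}, \cite{Malle} and \cite{BrHoRD}.
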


\begin{proof}
As for the classical groups, it will suffice by Lemma \ref{UnionLemma} 
to divide the maximal subgroups of $G$ into a number of classes independent 
of $q$ and to show that a law of the required length holds in each family. 

First, we deal with the class of \emph{subfield subgroups}, 
that is, subgroups of the form $M = X(q^{1/r})$, 
where $r$ is a prime divisor of $\log_p (q)$. 
A law valid in all such subgroups is produced 
by the same argument that was used for the subfield levels 
in Proposition \ref{geompropn}: 
there is a law of acceptable length valid in all such 
$M$ with $r \geq 7$ by Proposition \ref{maxorderlawpropn}. 
For each of $r = 2, 3$ or $5$, 
a law of acceptable length holds in $M$ by induction. 
Thus by Lemma \ref{UnionLemma} there is a law 
of acceptable length valid in all subfield subgroups. 

The case of the Suzuki groups $^2 B_2 (q)$ ($q = 2^{2m+1}$) 
follows from \cite{BrHoRD} Table 8.16: 
every maximal subgroup of $G$ is either a \emph{subfield subgroup} or is soluble of derived length at most $3$. 

The case of the small Ree groups $^2 G_2 (q)$ ($q = 3^{2m+1}$)
follows from \cite{BrHoRD} Table 8.43 (which in turn is based on \cite{KleidG2}): 
every maximal subgroup is either 
a \emph{subfield subgroup}; 
is soluble of derived length at most $4$, 
or is isomorphic to $2 \times \PSL_2 (q)$. 

The case of the Steinberg triality groups $^3 D_4 (q)$ follows from \cite{BrHoRD} Table 8.51 (which in turn is based on \cite{Kleid3D4}). 
Other than the \emph{subfield subgroups}, 
all maximal subgroups $M$ of $G$ 
have the following structure: 
there is a subnormal series 
\begin{center}
$M = K_1 \vartriangleright K_2 \vartriangleright K_3 
\vartriangleright K_4 \vartriangleright K_5 = 1$
\end{center}
such that $K_1/K_2$ has order at most $24$; 
$K_2/K_3$ is abelian; $K_4$ has order a power of $q$, 
and $K_3/K_4$ is isomorphic to one of the following: 
\begin{center}
$\SL_2 (q^3) \circ (q-1)$; $\SL_2 (q) \circ (q^3-1)$; 
$G_2 (q)$; $\SL_2 (q^3) \circ \SL_2 (q)$; \\
$\SL_3 (q) \circ (q^2+q+1)$; $\SU_3 (q) \circ (q^2-q+1)$; 
$\PGL_3(q)$; $\PGU_3(q)$. 
\end{center}
There is a sufficiently short law for $K_4$ by Corollary \ref{uniplawcoroll}, 
for $K_3 / K_4$ by hypothesis and Corollary \ref{ProdLemma}, 
and thus also for $M$ by Lemma \ref{ExtnLemma}. 

The case of the large Ree groups $^2 F_4 (q)$ ($q = 2^{2m+1}$) 
follows from the Main Theorem of \cite{Malle} 
(note that the order $q$ of the underlying field is, for historical reasons, 
denoted $q^2$ in \cite{Malle}, in spite of being an odd power of $2$). 
Several isomorphism types of groups which are extensions of an abelian group 
by a group of bounded order occur, as do \emph{subfield subgroups}. 
Other than these, every maximal subgroup $M$ has a subnormal series: 
\begin{center}
$M = K_1 \vartriangleright K_2 \vartriangleright K_3 
\vartriangleright K_4 = 1$
\end{center}
such that $K_1 / K_2$ is abelian, $K_3$ is a $2$-group, and $K_2 / K_3$ 
is isomorphic to one of the following: 
\begin{center}
$\PSL_2 (q) \times (q-1)$; $^2B_2(q) \times (q-1)$; \\
$\SU_3 (q)$; $\PGU_3 (q)$; $^2B_2(q) \times {^2B_2(q)}$; 
$B_2 (q)$. 
\end{center}
We have a sufficiently short law for $K_3$ by Corollary \ref{uniplawcoroll}, 
for $K_2 / K_3$ by hypothesis and Corollary \ref{ProdLemma}, 
and thus also for $M$ by Lemma \ref{ExtnLemma}. 

We may therefore assume that $G$ is one of $G_2 (q)$, $F_4 (q)$, $E_6 (q)$, 
$^2 E_6 (q)$, $E_7 (q)$ or $E_8 (q)$. 
Theorem 8 of \cite{LieSeiSurv} describes the possibilities for $M$: 
\begin{itemize}
\item[{\it (i-a)}] $M$ is a maximal parabolic subgroup; 

\item[{\it (i-b)}] $M$ is a subgroup of \emph{maximal rank}: 
the possibilities for $M$ are given 
in Tables 5.1 and 5.2 from \cite{LieSaxSei}; 

\item[{\it (i-c)}] $G = E_7 (q)$ and $M = (2^2 \times P\Omega_8 ^+ (q).2^2).\Sym(3)$ or $^3 D_4 (q).3$; 

\item[{\it (i-d)}] $G = E_8 (q)$ and $M = \PGL_2 (q) \times \Sym(5)$; 

\item[{\it (i-e)}] $F^* (M)$ is one of the groups appearing in Table 3 from \cite{LieSeiSurv}; 

\item[{\it (ii)}] $M$ is either a \emph{subfield subgroup} 
or a \emph{twisted type}, that is one of the following: 
$^2E_6(q^{1/2}) \leq E_6(q)$ (for $q$ a square); 
$^2 F_4 (q) \leq F_4 (q)$ (for $q = 2^{2m+1}$) 
or $^2G_2(q) \leq G_2(q)$ (for $q = 3^{2m+1}$); 

\item[{\it (iii)}] $M$ is an \emph{exotic local subgroup}, isomorphic to one of 
$2^3 . \SL_3 (2)$, $3^3 . \SL_3 (3)$, $3^{3+3} . \SL_3 (3)$, 
$5^3 . \SL_3 (5)$ or $2^{5+10} . \SL_5 (2)$; 

\item[{\it (iv)}] $G = E_8 (q)$ and $M = (\Alt(5) \times \Alt(6)).2^2$; 

\item[{\it (v)}] $F^* (M)$ is one of the simple groups appearing in Table 2 from \cite{LieSeiSurv}; 

\item[{\it (vi)}] $F^* (M)$ is a finite simple group of Lie type over a field $\mathbb{F}_{q_0}$ of characteristic $p$; $\rk (F^* (M)) \leq \rk(G)/2$, and there exists a constant $t(G)$, depending only on the root system of $G$, 
such that one of the following holds:
\begin{itemize}
\item[{\it (a)}] $q_0 \leq 9$;
\item[{\it (b)}] $F^* (M) = \PSL_3 (16)$ or $\PSU_3 (16)$;
\item[{\it (c)}] $q_0 \leq t(G)$ and $F^* (M) = \PSU_2 (q_0)$, $^2 B_2 (q_0)$ or $^2 G_2 (q_0)$. 
\end{itemize}

\end{itemize}
Recall that $F^*(M)$ is the generalized Fitting subgroup of $M$. 

In case (ia), there exists a subnormal series: 
\begin{center}
$M = K_1 \vartriangleright K_2 \vartriangleright K_3 \vartriangleright K_4 
\vartriangleright K_5 = 1$
\end{center}
such that $K_1/K_2$ is of bounded order, $K_2/K_3$ is abelian, 
$K_4$ is a $p$-group, and $K_3/K_4 = L \circ A$, 
where $A$ is abelian and $L$ is a 
central extension of a group $T \leq \overline{L} \leq \Aut(T)$, 
for $T$ one of the following direct products of nonabelian 
simple groups. 

\begin{table}[H] \label{lawstable}
\small
\begin{center}
\begin{tabular}{|c|c|}
\hline 
$G$ & $T$ \\ 
\hline 
$G_2 (q)$ & $A_1 (q)$ \\ 
\hline 
$F_4 (q)$ & $B_3 (q)$; $C_3 (q)$; 
$A_1 (q) \times A_2 (q)$ \\ 
\hline 
$E_6 (q)$ & $D_5(q)$; $A_1 (q) \times A_4 (q)$; \\
&$A_1 (q) \times A_2 (q) \times A_2 (q)$; $A_5(q)$\\ 
\hline 
${^2}E_6 (q)$ & $^2A_5(q)$; $A_1 (q) \times A_2 (q^2)$; 
$A_2(q) \times A_1 (q^2)$; $^2D_4(q)$ \\ 
\hline
$E_7 (q)$ & $E_6 (q)$; $A_1 (q) \times D_5(q)$; 
$A_2 (q) \times A_4 (q)$; $D_6 (q)$; \\ 
& $A_1 (q) \times A_5 (q)$; 
$A_1 (q) \times A_2 (q) \times A_3 (q)$; $A_6(q)$ \\
\hline
$E_8 (q)$ & $E_7(q)$; $A_1 (q) \times E_6 (q)$; 
$D_5 (q)\times A_2 (q)$; $A_3 (q)\times A_4 (q)$; \\ 
 & $A_1 (q) \times A_2 (q) \times A_4 (q)$; 
$D_7 (q)$; $A_1 (q)\times A_6 (q)$; $A_7 (q)$\\
\hline 
\end{tabular} 
\end{center}
\end{table}

Each simple factor of each $T$ arising satisfies 
a sufficiently short law by induction, 
and by the Schreier hypothesis $\Out(T)$ is the extension 
of a soluble group of derived length at most three, 
by a permutation group of the isomorphic direct factors. 
By Lemmas \ref{ProdLemma} and \ref{ExtnLemma}, therefore, 
$L$ satisfies a sufficiently short law. 

In case (ib), for every group $M$ 
appearing in Table 5.1 from \cite{LieSaxSei}, 
there exists a subnormal series: 
\begin{center}
$M = K_1 \vartriangleright K_2 \vartriangleright K_3 \vartriangleright K_4 
\vartriangleright K_5 = 1$
\end{center}
such that $K_1/K_2$ is of bounded order, $K_2/K_3$ and $K_4$ are abelian, 
and $K_3/K_4$ is a direct product of a bounded number of 
finite simple groups $H_i \in \Lie (p)$, 
each satisfying $H_i \prec G$. 
Meanwhile every group appearing in Table 5.2 from \cite{LieSaxSei} 
is the extension of an abelian group by a group of bounded order. 

In cases (ic) and (id), we have sufficiently short laws for 
$P\Omega_8 ^+ (q)$, $^3D_4(q)$ and $\PGL_2(q)$ by hypothesis, 
so we may produce sufficiently short laws for $M$ by Lemma \ref{ExtnLemma}. 

For case (ie), $F^{\ast}(M)$ is the 
direct product of at most $3$ nonabelian finite 
simple groups of Lie type in characteristic $p$, 
each of which satisfies a sufficiently short law by induction 
(seen by inspection of Table 3 from \cite{LieSeiSurv}). 
By Proposition \ref{fittingprop}, $M \leq \Aut(F^{*}(M))$. 
By Theorem \ref{simpleautthm} and Proposition \ref{prodautprop}, 
$\Out(F^{*}(M))$ is the extension of a soluble group of derived 
length at most $3$ by a subgroup of $\Sym(3)$. 
By Lemma \ref{ExtnLemma}, it suffices 
that $F^{\ast}(M)$ satisfies a sufficiently short law. 
This is so, since each simple factor does, 
and by Corollary \ref{ProdLemma}. 

Among the subgroups arising in case (ii), 
the untwisted subfield subgroups 
(that is, those of the form $M = X(q^{1/r})$ 
for $r$ a prime divisor of $\log_p (q)$) 
are dealt with as above. 
Those of twisted type 
satisfy a sufficiently short law by induction. 

Finally the subgroups arising in cases (iii)-(vi) are all of bounded order (in cases (v) and (vi) this follows from Proposition 
\ref{fittingprop}, since $F^*(M)$ is simple of bounded order). 
\end{proof}

\section{Completing the Proof of Theorem \ref{strongmainthm}} 
\label{MainThmProofSect}

At last we are ready to put everything together 
and prove our main result. 

\begin{proof}[Proof of Theorem \ref{strongmainthm}]
\normalfont
First we show that a law of the required length 
does indeed exist. This is the upshot of the last two sections. 
To be explicit: let $G$ be as in Theorem \ref{strongmainthm}. 
Let $w_{\rm gen} \in F_2$ be the word produced in Theorem 
\ref{gensetmainthm}. 
Suppose by induction that a law of the required length 
exists for all $H \prec G$ as in Definition \ref{posetdefn}. 
If $G$ is classical, let $w_{\rm geom}$ and $w_{\rm nongeom}$ 
be as in Propositions \ref{geompropn} and \ref{nongeomprop}, 
respectively. 
If $G$ is exceptional, let $w_{\rm exc}$ be as 
in Proposition \ref{excthm}. 
For $(g,h) \in G \times G$, either $\lbrace g,h\rbrace$ 
generates $G$ or $\langle g,h \rangle$ is contained 
in a maximal subgroup of $G$. 
If $G$ is classical then this subgroup is either 
geometric or non-geometric. 
Thus applying Lemma \ref{UnionLemma} to 
$w_{\rm gen},w_{\rm geom},w_{\rm nongeom}$ (for $G$ classical) 
or $w_{\rm gen},w_{\rm exc}$ (for $G$ exceptional) we have the 
required law. 

\vspace{0.1cm}

The lower bound on the length of the shortest law in 
$G$ from Theorem \ref{strongmainthm} is witnessed by the following subgroups, which exist for $q$ 
larger than an absolute constant. There is no other way than doing this case by case.

\begin{itemize}
\item By Theorem \ref{lowerboundex}, 
$A_l (q)$ has shortest law of length 
$\Omega (q^{\lfloor (l+1)/2\rfloor})$. 
\item $^2 A_1 (q)$ has shortest law of length $\Omega(q)$ 
by Theorem \ref{smallrkisothm} (i). 
Suppose $l \geq 2$ and let $n=l+1$. 
Claim that the shortest law for $^2 A_l (q)$ 
has length $\Omega (q^{\lfloor n/2\rfloor})$. 
If $n$ is odd, then $^2 A_{l-1} (q)$ is 
a subquotient of $^2 A_l (q)$ (\cite{BrHoRD} Table 2.3) 
and the result follows by induction. 
If $n$ is divisible by $4$, then $A_{n/2-1}(q^2)$ 
is a subquotient of $^2 A_l (q)$ (\cite{BrHoRD} Table 2.3 again) 
and the conclusion follows. 
Otherwise write $n=ms$ for $s$ an odd prime. 
Then $^2 A_{m-1} (q^s)$ is a subquotient 
of $^2 A_l (q)$ (\cite{BrHoRD} Table 2.6). Since $m$ is even, 
we are done by induction. 
\item $C_1 (q)$ has shortest law of length $\Omega(q)$ 
by Theorem \ref{smallrkisothm} (i). 
For $l \geq 2$, claim the shortest law for $C_l (q)$ 
has length $\Omega (q^l)$. 
Write $l=ms$, for $s$ a prime. 
Then $C_m (q^s)$ is a subquotient of $C_l (q)$ 
(\cite{BrHoRD} Table 2.6), so we are done by induction. 

\item By Theorem \ref{smallrkisothm} (iv) and (vii) 
and the previous paragraphs, $^2 D_2 (q)$ 
and $^2 D_3 (q)$ each have shortest law of length $\Omega(q^2)$. 
Assume $l \geq 4$ and claim the shortest law for $^2 D_l (q)$ 
has length $\Omega(q^{2\lfloor l/2\rfloor})$. 
If $l$ is even, and $l/2 = ms$, for $s$ a prime, 
then $^2 D_{l/s} (q^s)$ is a subquotient 
of $^2 D_l (q)$ (\cite{BrHoRD} Table 2.6). 
If $l$ is odd, then $^2 D_{l-1} (q)$ is a subquotient 
of $^2 D_l (q)$ (\cite{BrHoRD} Table 2.3). 
In both cases we are done by induction. 

\item By Theorem \ref{smallrkisothm} (v) and the above, 
$B_2 (q)$ has shortest law of length $\Omega (q^2)$. 
Suppose $l \geq 3$. 
By Theorem \ref{smallrkisothm} (viii) 
we need only consider $B_l(q)$ for $q$ odd. 
If $l$ is even, 
then $^2 D_l (q)$ is a subquotient of $B_l(q)$, 
whereas if $l$ is odd, 
then $^2 D_{l-1} (q)$ is a subquotient of $B_l(q)$ 
(\cite{BrHoRD} Table 2.3). 
In both cases it follows from the previous paragraph 
that the shortest law for 
$B_l(q)$ has length $\Omega (q^{2\lfloor l/2\rfloor})$. 

\item For $D_l (q)$, with $l \geq 4$, 
then $^2 D_{l-1} (q)$ is a subquotient of $D_l (q)$. 
If $l$ is odd, it follows that the shortest law for $D_l (q)$ 
has length $\Omega(q^{l-1})$, whereas if $l$ is even, 
it has length $\Omega(q^{l-2})$. 
Moreover if $l$ and $q$ are both even, 
then $C_{l-1} (q)$ is a subquotient of $D_l (q)$, 
whose shortest law therefore has length $\Omega(q^{l-1})$ 
(see \cite{BrHoRD} Table 2.3). 

\item $G_2 (q)$ has a parabolic subgroup with Levi factor $\GL_2(q)$: 
this has shortest law of length $\Omega( q)$. 

\item $^2 G_2 (q)$ has a subgroup $\PSL_2 (q)$: 
this has shortest law of length $\Omega( q)$; 
as is noted in \cite{KleidG2} Theorem C, 
this subgroup is contained in an involution-centraliser, 
and exists for $q \geq 27$. 

\item $^3 D_4 (q)$ has a maximal subgroup with a subquotient 
$\PSL_2 (q^3)$: this has shortest law of length $\Omega(q^3)$; 
as noted in \cite{Kleid3D4}, 
this is a maximal parabolic subgroup. 

\item $F_4 (q)$ has a maximal subgroup with $\Omega_9(q)$ 
as a quotient: this has shortest law of length $\Omega(q^4)$; 
this is a subgroup of maximal rank 
(see Table 5.1 from \cite{LieSaxSei}). 
The subgroup $\Sp_4 (q^2)$, occuring inside a 
maximal subgroup of maximal rank as noted above,
also has shortest law of length $\Omega(q^4)$, 
but as is noted in \cite{LieSaxSei}, 
this only arises for $q$ even. 

\item $^2 F_4 (q)$ has a subquotient $B_2 (q)$, 
whose shortest law has length $\Omega(q^2)$ 
(see the Main Theorem of \cite{Malle}; 
note however the difference in notational convention: 
our $q$ is denoted $q^2$ in \cite{Malle}). 

\item $E_6 (q)$ has a parabolic subgroup, the Levi factor of which 
contains $\Omega_{10} ^+ (q)$, 
whose shortest law has length $\Omega (q^4)$. 

\item $^2E_6(q)$ has a parabolic subgroup, the Levi factor of which 
contains $\Omega_{8} ^- (q)$, 
whose shortest law has length $\Omega (q^4)$. 

\item $E_7(q)$ has a maximal subgroup of maximal rank containing 
$\PSL_2(q^7)$ (see Table 5.1 from \cite{LieSaxSei}), 
whose shortest law has length $\Omega (q^7)$.

\item $E_8 (q)$ has a maximal parabolic subgroup, 
the Levi factor of which contains $E_7(q)$, 
whose shortest law has length $\Omega (q^7)$.

\item Finally, the shortest law in $^2B_2(q)$ has length 
$\Omega(q^{1/2})$, by \cite{BGTSuzuki} Lemma 3.4. 
\end{itemize}
This finishes the proof of Theorem \ref{strongmainthm}.
\end{proof}

\section*{Acknowledgments}

This research was supported by ERC CoG 681207 
and ERC grant "GRANT" no. 648329. 
HB would like to thank Tim Burness, Martin Liebeck, 
Nikolay Nikolov and Emilio Pierro for enlightening 
conversations. 

\appendix

\section{Background on Groups of Lie Type} \label{theappendix}

The goal of this appendix is to provide background on finite simple groups and  prove 
Proposition \ref{orderdivpropn}. We also collect various tables and case studies in order to make the main text more transparent. We start with a quick overview that recalls the classical theory.

\subsection{Groups of Lie Type}

\subsubsection{Isomorphisms in Small Rank or Characteristic}

We note some exceptional isomorphisms of low-dimensional 
classical groups. 
Items (i)-(vii) of Theorem \ref{smallrkisothm} below are recorded, 
for example, in \cite{BrHoRD} Proposition 1.10.1. 
Item (viii) is discussed for instance 
in \cite{BrHoRD} Subsection 1.5.5.

\begin{thm} \label{smallrkisothm}
Let $q$ be a prime power. 
\begin{itemize}
\item[(i)] $\PSL_2 (q) \cong \PSp_2(q) \cong \PSU_2(q) \cong P\Omega^{\circ} _3(q)$; 

\item[(ii)] $P\Omega_2 ^{\pm} (q)$ are abelian. 

\item[(iii)] $P\Omega_4 ^+ (q) \cong \PSL_2 (q) \times \PSL_2 (q)$;

\item[(iv)] $P\Omega_4 ^- (q) \cong \PSL_2 (q^2)$; 

\item[(v)] $P\Omega^{\circ} _5 (q) \cong \PSp_4(q)$; 

\item[(vi)] $P\Omega_6 ^+ (q) \cong \PSL_4(q)$; 

\item[(vii)] $P\Omega_6 ^- (q) \cong \PSU_4(q)$; 

\item[(viii)] Suppose $q$ is even. 
Then for all $n \geq 1$, 
$P\Omega^{\circ} _{2n+1}(q) \cong \PSp_{2n}(q)$. 
\end{itemize}
\end{thm}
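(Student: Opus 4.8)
The plan is to reduce everything to coincidences among root systems of small rank, together with the exceptional isogeny in characteristic~$2$, and then to track the effect of the Frobenius and of passing to the simple quotient. All of (i)--(vii) are instances of the general principle that the isomorphism type of a connected simple algebraic group over $\overline{\mathbb{F}_q}$ is determined by its root datum, and that a Frobenius endomorphism is determined up to conjugacy by its induced action on the root datum; moreover an isomorphism $\phi \colon \mathbb{G}_1 \to \mathbb{G}_2$ of such groups satisfying $\phi \circ F_1 = F_2 \circ \phi$ restricts to an isomorphism $\mathbb{G}_1^{F_1} \to \mathbb{G}_2^{F_2}$ of groups of $\mathbb{F}_q$-points. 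A full account is \cite{BrHoRD} Proposition~1.10.1, and item~(viii) is \cite{BrHoRD} Subsection~1.5.5; below I indicate the structural reasons, so that the statement can be invoked transparently in the main text.

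For (i)--(vii) the relevant diagram coincidences are as follows. The equalities $A_1 = B_1 = C_1$ give (i), once one also notes that the graph automorphism defining type $^2A_1$ acts trivially on the rank-$1$ datum and that $P\Omega^{\circ}_3(q)$ is the group of $\mathbb{F}_q$-points of the adjoint group of type $B_1$. The type $D_1$ is a one-dimensional torus, so $P\Omega^{\pm}_2(q)$ is cyclic (of order $q \mp 1$ up to the central quotient), proving (ii). The equality $D_2 = A_1 \times A_1$ gives (iii); for (iv) the Frobenius defining type $^2D_2$ composes the standard Frobenius with the swap of the two $A_1$-factors, so taking fixed points amounts to restriction of scalars and yields $\SL_2(q^2)$, hence $P\Omega^-_4(q) \cong \PSL_2(q^2)$. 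The equality $B_2 = C_2$ gives (v), and $D_3 = A_3$ gives (vi) (the Klein correspondence), with its twisted form $^2D_3 = {}^2A_3$ giving (vii). In each case one checks that the centre of the relevant simply connected group is carried appropriately under the identification, so that the isomorphism descends to the named simple quotients; since all groups involved have rank at most~$3$ this is a finite inspection of fundamental groups. Note that the resulting isomorphisms hold as abstract group isomorphisms for every prime power $q$, including the small values of $q$ for which some of the groups fail to be simple.

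For (viii): in characteristic~$2$ a non-degenerate quadratic form $Q$ on a $(2n+1)$-dimensional space over $\overline{\mathbb{F}_2}$ has associated bilinear form $b(u,v) = Q(u+v) - Q(u) - Q(v)$, which is alternating (since $b(u,u) = 2Q(u) = 0$) and hence, being of even rank on an odd-dimensional space, has a one-dimensional radical $\langle e \rangle$; non-degeneracy of $Q$ forces the radical to be exactly $\langle e \rangle$. Passing to the quotient by $\langle e \rangle$ sends the orthogonal space to a non-degenerate symplectic space of dimension $2n$ and induces a morphism of algebraic groups $\SO_{2n+1} \to \Sp_{2n}$ over $\overline{\mathbb{F}_2}$ which is bijective on points — this is the very special isogeny between types $B_n$ and $C_n$. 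Taking $\mathbb{F}_q$-fixed points and using that bijectivity on $\overline{\mathbb{F}_2}$-points forces bijectivity on $\mathbb{F}_q$-points (Lang's theorem for surjectivity, triviality of the kernel for injectivity), one obtains an isomorphism $\Omega^{\circ}_{2n+1}(q) \to \Sp_{2n}(q)$ compatible with the respective centres, whence $P\Omega^{\circ}_{2n+1}(q) \cong \PSp_{2n}(q)$.

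The main obstacle is purely bookkeeping: in each of (i)--(vii) one must confirm that the chosen isomorphism (or isogeny) of algebraic groups carries the centre of one simply connected group onto the centre of the other in a manner compatible with the Frobenius, so that it descends to an isomorphism of the simple quotients named in the statement rather than merely to an isogeny. As all the groups involved have rank at most~$3$, this is a short case-by-case check of fundamental groups requiring no idea beyond what is already contained in \cite{BrHoRD}; accordingly the cleanest presentation is simply to cite that reference, with the above as orientation.
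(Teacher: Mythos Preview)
Your proposal is correct and takes essentially the same approach as the paper: both simply cite \cite{BrHoRD} Proposition~1.10.1 for (i)--(vii) and \cite{BrHoRD} Subsection~1.5.5 for (viii), the paper giving no further argument. Your additional orientation via root-system coincidences and the characteristic-$2$ special isogeny is accurate and helpful, but goes beyond what the paper itself provides.
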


It follows that, in proving Theorem \ref{strongmainthm}, 
we may assume the following restrictions on $X$ and $q$ hold. 

\begin{itemize}
\item[{\it (i)}] If $X = {^2}A_l$ or $C_l$ then $l \geq 2$; 

\item[{\it (ii)}] If $X = B_l$ then $l \geq 3$; 

\item[{\it (iii)}] If $X = D_l$ or $^2 D_l$ then $l \geq 4$; 

\item[{\it (iv)}] If $X = B_l$ then $q$ is odd. 

\end{itemize}

\subsubsection{Representations} \label{repsubsubsect}

We record the minimal dimension $n = n(X,p)$ of a faithful projective 
representations of the simple group $G=X(q)$ 
over the algebraic closure of $\mathbb{F}_q$ 
(recalling that $q$ is a power of the prime $p$: 
implicit in writing $n=n(X,p)$ is the claim that 
the dimension does not depend on the degree of the 
field extension $(\mathbb{F}_q \colon \mathbb{F}_p)$; 
we see below that this is indeed the case). 
In addition to providing context to the statement of 
Corollary \ref{mainthm}, 
knowing $n(X,p)$ will be important in the proof of 
Theorem \ref{strongmainthm}, in that it will provide a 
restriction on the possible embeddings of one 
quasisimple group of Lie type into another of matched 
characteristic as a non-geometric subgroup. 

The various possible values for $n(X,p)$ 
are recorded in \cite[Proposition 5.4.13]{KleLie}. 
We reproduce their conclusions in the next Theorem. 
Note that every group in our list 
is isomorphic to a group in their list 
by Theorem \ref{smallrkisothm}. 

\begin{thm} \label{minrepdegreethm}
Let $G = X (q)$ be as in Theorem \ref{strongmainthm}. 
If $X=C_2$ then suppose $q \geq 3$. 
Let $n=n(X,p) \in \mathbb{N}$ be 
the minimal dimension of a faithful projective module 
for $G$ over $\overline{\mathbb{F}}_q$ 
(in other words, let $n$ be minimal such that $G$ is isomorphic to a subgroup 
of $\PGL_n ({\overline{\mathbb{F}}_q})$). 
Then $n$ is as in Table \ref{reptable} below. 
\end{thm}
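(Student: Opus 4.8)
The plan is to reduce the computation to a statement about rational representations of the ambient simply connected algebraic group, and then to quote the known classification of minimal-dimensional such representations, translating it into our notation.

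First I would pass from projective representations of $G = X(q)$ to linear representations of a covering group. If $G$ embeds in $\PGL_n(\overline{\mathbb{F}}_q)$ then there is a central extension $\hat{G}$ of $G$ (a quotient of the Schur cover) and a linear representation $\hat{G} \to \GL_n(\overline{\mathbb{F}}_q)$ which is faithful modulo scalars; replacing it by an irreducible constituent does not increase the dimension and, since $G$ is simple, keeps the representation faithful modulo scalars, the last point being where Lemma \ref{BurnsideLemma} enters. Conversely, any irreducible $\overline{\mathbb{F}}_q \hat{G}$-module on which $Z(\hat{G})$ acts by scalars but $\hat{G}$ does not yields a projective embedding of $G$ of the same dimension. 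Hence $n(X,q)$ is the least dimension of such an irreducible module, and, since we are in the defining characteristic, $\hat{G}$ is a quotient of the fixed-point group $\mathbb{G}^{\sigma}$ of the simply connected cover $\mathbb{G}$ of the ambient algebraic group under the relevant Frobenius endomorphism $\sigma$.

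Next I would invoke Steinberg's tensor product theorem: every irreducible $\overline{\mathbb{F}}_q \mathbb{G}^{\sigma}$-module is a tensor product of Frobenius twists of restricted irreducible modules $L(\mu)$, each of which is the restriction of a rational $\mathbb{G}$-module whose dimension depends only on $\mu$, on the root system, and on the characteristic $p$ --- crucially not on the degree of the extension $(\mathbb{F}_q : \mathbb{F}_p)$. As the dimension of a tensor product is the product of the dimensions of its factors, the minimum over faithful irreducibles is attained at a single restricted module. This simultaneously justifies writing $n = n(X,p)$ and reduces the theorem to computing $\min \dim L(\mu)$ over $p$-restricted dominant weights $\mu$ for which $L(\mu)$ is faithful modulo scalars on $G$.

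Finally I would quote the solution of this minimisation problem, which for the groups on our list is \cite[Proposition 5.4.13]{KleLie}. For classical $X$ the minimum is realised by the natural module, or a twist of it, apart from finitely many exceptions in small rank and small characteristic (typically involving spin modules for $B_l$ and $D_l$, or reduced forms of the natural module for the symplectic and orthogonal groups in characteristic $2$), all of which are listed explicitly in \cite{KleLie}; for exceptional $X$ it is realised by the familiar minimal module ($7$- or $6$-dimensional for $G_2$ according to whether $p \neq 2$, $26$- or $25$-dimensional for $F_4$ according to whether $p \neq 3$, the $27$-dimensional module for $E_6$ and ${}^2E_6$, the $8$-dimensional module for ${}^3D_4$, the $56$-dimensional module for $E_7$, the adjoint module for $E_8$, and the $4$-dimensional symplectic module for ${}^2B_2$). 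Matching the list of \cite{KleLie} to Table \ref{reptable} is then purely a bookkeeping exercise: one applies the exceptional isomorphisms of Theorem \ref{smallrkisothm} to reconcile the two parametrisations, keeps track of the convention fixed in our Notation that $X(q)$ denotes the group whose defining Frobenius has fixed field of order $q$, and notes that the hypothesis $q \geq 3$ for $X = C_2$ is needed precisely because $\PSp_4(2) \cong \Sym(6)$ is not simple. The main obstacle I anticipate is not conceptual but organisational: ensuring that every small-rank and small-characteristic exception --- and in particular every delicate faithfulness condition, which depends on $Z(\hat{G})$ and hence on $q$ modulo small integers --- is accounted for when transcribing \cite{KleLie} into our conventions.
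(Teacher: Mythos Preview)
Your proposal is correct and, in fact, more detailed than what the paper does: the paper provides no proof of Theorem \ref{minrepdegreethm} at all, merely stating immediately before it that ``The various possible values for $n(X,p)$ are recorded in \cite[Proposition 5.4.13]{KleLie}. We reproduce their conclusions in the next Theorem.'' Your sketch supplies the standard reduction (via Schur covers, Lemma \ref{BurnsideLemma}, and Steinberg's tensor product theorem) that explains why the cited result from \cite{KleLie} answers the question and why the answer depends only on $p$ and not on $q$, which is useful context but goes beyond what the paper itself offers.
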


\begin{table}[H] 
\small
\begin{center}
\begin{tabular}{|c|c|c|c|c|c|}
\hline 
$X$ & $A_l \text{ or } ^2 A_l$
 & $B_l \text{ ($p$ odd)}$ & $C_l$ & $D_l$ & $^2 D_l$ \\ 
\hline 
$n$ & $l+1$ & $\begin{array}{cc} 2 & (l=1) \\ 4 & (l=2) \\ 2l+1 & (l \geq 3) \end{array}$ & $2l$ & $\begin{array}{cc} 4 & (l=2) \\ 4 & (l=3) \\ 2l & (l \geq 4) \end{array}$ & $\begin{array}{cc} 2 & (l=2) \\ 4 & (l=3) \\ 2l & (l \geq 4) \end{array}$ \\

\hline 
\end{tabular} 
\end{center}
\begin{center}
\begin{tabular}{|c|c|c|c|c|c|}
\hline 
$X$ & $E_6 \text{ or } ^2 E_6$ & $E_7$ & $E_8$ & $F_4$ & $G_2$ \\ 
\hline 
$n$ & $27$ & $56$ & $248$ & $\begin{array}{cc} 25 & \text{($p=3$)} \\ 26 & \text{($p \neq 3$)} \end{array}$ & 
$\begin{array}{cc} 7 & \text{($p$ odd)} \\ 6 & \text{($p=2$)} \end{array}$ \\
\hline 
\end{tabular} 
\end{center}
\begin{center}
\begin{tabular}{|c|c|c|c|c|}
\hline 
$X$ & $^3D_4$ & $^2B_2$ & $^2F_4$ & $^2G_2$ \\ 
\hline 
$n$ & $8$ & $4$ & $26$ & $7$ \\
\hline 
\end{tabular} 

\caption{Minimal dimensions of projective representations
of simple groups of Lie type}\label{reptable}
\end{center}
\end{table}

\begin{rmrk}
Corollary \ref{mainthm} follows from Theorem \ref{strongmainthm}, 
by inspection of Tables \ref{table:lawstable} and \ref{reptable}, 
since $a(X,p) \leq \lfloor n(X,p)/2 \rfloor$ in all cases. 
\end{rmrk}

\subsubsection{Maximal Element Orders} \label{maxeltordersubsect}

We give upper bounds on the orders of elements 
in finite simple groups of Lie type. 
For this we refer to the tables from \cite{Thom}, 
which in turn are based on \cite{KanSer}. 

\begin{propn} \label{MaxEltOrderBd}
Let $d(X)$ be as in Table \ref{table:maxelordertable}. Then: 
\begin{center}
$\max \lbrace o(g) : g \in X(q) \rbrace = O(q^{d(X)})$. 
\end{center}
\end{propn}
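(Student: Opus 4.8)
The plan is to deduce the bound from the Jordan decomposition together with classical bounds on the orders of semisimple and unipotent elements in groups of Lie type, all of which are available in the literature. Since element orders do not increase on passing to a subgroup or a quotient, it suffices to bound $\max\{ o(g) : g \in \widetilde G \}$, where $\widetilde G = \bar G^F$ is the group of fixed points of a Steinberg endomorphism $F$ on the simply connected simple algebraic group $\bar G$ of type $X$; indeed $X(q)$ is a section of $\widetilde G$. Given $g \in \widetilde G$ with Jordan decomposition $g = su = us$, $s$ semisimple and $u$ unipotent, one has $o(g) = o(s)\, o(u)$ with $\gcd(o(s),p) = 1$ and $o(u)$ a power of $p$.

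The unipotent part $u$ lies in a maximal unipotent subgroup of $\widetilde G$, and its order is controlled by the nilpotence degree of $u-1$ acting on the natural or adjoint module, whose dimension is $O_X(1)$; hence $o(u) = O_X(p) = O_X(q)$. The semisimple part $s$ lies in an $F$-stable maximal torus $T$, and $o(s) \mid \exp(T^F) \le |T^F|$, where $|T^F|$ is a product of factors of the form $q^{d_i} \pm 1$ with $\sum_i d_i = \rk(T)$ determined by the Weyl-group conjugacy class twisting $T$; thus $o(s) = O_X(q^{\rk(T)})$. Combining these two estimates crudely gives only $o(g) = O_X(q^{l+1})$ with $l = \rk(\widetilde G)$, which is not sharp: if $u \neq 1$ then $s$ is non-regular and lies in the centre of its connected centraliser $C_{\bar G}(s)^\circ$, a torus of rank strictly less than $l$, so that $o(s)$ obeys a stronger bound. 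Optimising the resulting trade-off — over the conjugacy class of $C_{\bar G}(s)^\circ$ and over the admissible twisted types of $T$ — is precisely the computation of maximal element orders carried out by Kantor and Seitz \cite{KanSer}, whose numerical conclusions are recorded in the tables of \cite{Thom}.

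It then remains only to consult those tables and verify, type by type, that the maximal element order of $X(q)$ is $O_X(q^{d(X)})$ for the value $d(X)$ of Table~\ref{table:maxelordertable}, noting that passing from $\widetilde G$ to the simple group $X(q)$ alters the exponent by at most a bounded factor, which is absorbed into the implied constant. I expect the only mildly delicate part of this verification to be the twisted families — $^2A_l$, $^2D_l$, $^3D_4$, $^2B_2$, $^2F_4$ and $^2G_2$ — where the relevant maximal tori are built from factors $q^{d_i} \pm 1$ governed by the twisting automorphism, so that $d(X)$ need not coincide with the exponent for the untwisted type of the same rank; the Suzuki and Ree groups in particular behave somewhat idiosyncratically, as they do elsewhere in the paper.
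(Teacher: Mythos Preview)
The paper does not prove this proposition at all: the sentence preceding it simply says ``For this we refer to the tables from \cite{Thom}, which in turn are based on \cite{KanSer}'', and the proposition is stated without proof. Your proposal is therefore strictly more detailed than the paper --- you give a correct heuristic sketch via the Jordan decomposition and the trade-off between the semisimple and unipotent parts, and then defer to the same two references for the actual type-by-type computation, which is exactly what the paper does. One small correction: the reference \cite{KanSer} is Kantor and \emph{Seress}, not Kantor and Seitz.
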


\begin{table}[H] 
\small
\begin{center}
\begin{tabular}{|c|c|c|c|c|c|c|}
\hline 
$X$ & $A_l$ & $^2 A_l$
 & $B_l$ & $C_l$ & $D_l$ & $^2 D_l$ \\ 
\hline 
$d(X)$ & $l$ & $l$
& $l$ & $l$ & $l$ & $l$ \\
\hline 
\end{tabular} 
\end{center}
\begin{center}
\begin{tabular}{|c|c|c|c|c|c|c|c|c|c|c|}
\hline 
$X$ & $E_6$ & $^2 E_6$ & $E_7$ & $E_8$ & $F_4$ & $G_2$
& $^3D_4$ & $^2B_2$ & $^2F_4$ & $^2G_2$ \\ 
\hline 
$d(X)$ & $6$ & $6$ & $7$ & $8$ & $4$ & $2$
& $4$ & $1$ & $2$ & $1$ \\
\hline 
\end{tabular} 
\caption{Degree of length of laws coming from maximal 
element orders}\label{table:maxelordertable}
\end{center}
\end{table}

\subsection{Automorphisms of Finite Groups}

If $G$ is a nonabelian finite simple group, 
then $Z(G)=1$, so we naturally have a short exact sequence 
$1\rightarrow G \rightarrow \Aut(G)\rightarrow \Out(G)\rightarrow 1$. 
The following is widely known and classical. 

\begin{thm} \label{simpleautthm}
Let $G$ be either a finite simple group of Lie type or 
$\Alt(n)$ for $n \geq 5$. 
Then $\Out(G)$ is soluble of derived length at most $3$. 
\end{thm}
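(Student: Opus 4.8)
The plan is to treat the alternating groups and the groups of Lie type separately, in each case exhibiting an explicit ``abelian-by-metabelian'' structure on $\Out(G)$. For $G = \Alt(n)$ with $n \ge 5$ this is immediate: by the classical theorem of H\"older, $\Aut(\Alt(n)) = \Sym(n)$ for every $n \ge 5$ with $n \ne 6$, so that $\Out(\Alt(n)) \cong \mathbb{Z}/2$, while $\Out(\Alt(6))$ is the Klein four-group. In either case $\Out(\Alt(n))$ is abelian, hence soluble of derived length at most $1 \le 3$.

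For $G = X(q)$ of Lie type I would invoke the classical description of $\Aut(G)$ in terms of inner, diagonal, field and graph automorphisms, due to Steinberg (see, e.g., \cite{Carter}). Write $\mathrm{Outdiag}(G)$ for the image in $\Out(G)$ of the group generated by the inner and diagonal automorphisms. Then $\mathrm{Outdiag}(G) \trianglelefteq \Out(G)$, and $\mathrm{Outdiag}(G)$ is abelian: it is isomorphic to (a subgroup of) the centre of the simply connected cover of the ambient algebraic group, so it is cyclic except in types $D_l$, where it may be a product of two cyclic groups. The quotient $\Out(G)/\mathrm{Outdiag}(G)$ is isomorphic to the direct product $\Phi_G \times \Gamma_G$, where $\Phi_G$ is the cyclic group of field automorphisms and $\Gamma_G$ is the group of graph automorphisms; and $\Gamma_G$ is isomorphic to a subgroup of the symmetry group of the Dynkin diagram of $X$, which in all cases is a subgroup of $\Sym(3)$, with $\Gamma_G \cong \Sym(3)$ occurring only for untwisted type $D_4$.

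Granting this, the derived-length bound is a short computation. Since $\Phi_G$ is abelian and $\Gamma_G$ has derived length at most $2$ (as $\Sym(3)' = \mathbb{Z}/3$ and $\Sym(3)'' = 1$), the direct product $\Phi_G \times \Gamma_G$ has derived length $\max(1,2) = 2$; hence $\Out(G)^{(2)} \le \mathrm{Outdiag}(G)$. As $\mathrm{Outdiag}(G)$ is abelian we obtain $\Out(G)^{(3)} \le [\mathrm{Outdiag}(G),\mathrm{Outdiag}(G)] = 1$, so $\Out(G)$ is soluble of derived length at most $3$.

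The only delicate point, I expect, is the bookkeeping in the twisted cases: one must be sure that the field-automorphism contribution to $\Out(G)$ is cyclic and that no residual graph automorphisms survive to enlarge the derived length. In particular it matters that $\Out(G)/\mathrm{Outdiag}(G)$ is a genuine \emph{direct} product $\Phi_G \times \Gamma_G$ (so of derived length $2$, not $3$), since otherwise the crude estimate coming from the evident normal series with an abelian, then a cyclic, then a $\Sym(3)$-like factor would only yield derived length $4$. All of this is standard and classical; note too that the bound $3$ is sharp, being attained already for $X = D_4$ with $q$ odd, where $\Out(D_4(q))$ has $\Sym(4)$ as a quotient.
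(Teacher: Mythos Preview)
Your argument is essentially correct, and in fact the paper does not prove this statement at all: it is simply asserted as ``widely known and classical'' immediately before the theorem, with no proof or further reference beyond the implicit appeal to Steinberg's description of automorphisms of groups of Lie type. So you have supplied an argument where the paper gives none.

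The outline you give is the standard one. Two minor remarks. First, your worry about the direct-product structure of $\Out(G)/\mathrm{Outdiag}(G)$ is well-placed but easily resolved: for untwisted $G$, field and graph automorphisms commute as automorphisms of $G$ (one acts on field coefficients, the other permutes root subgroups), so $\Phi_G$ and $\Gamma_G$ generate an internal direct product in $\Out(G)/\mathrm{Outdiag}(G)$; for twisted $G$, the graph automorphism is absorbed into the Frobenius defining the twist, and $\Out(G)/\mathrm{Outdiag}(G)$ is cyclic. Either way the quotient is metabelian. Second, your sharpness example is right: for $G = D_4(q)$ with $q$ odd, the triality action of $\Gamma_G \cong \Sym(3)$ on $\mathrm{Outdiag}(G) \cong (\mathbb{Z}/2)^2$ yields $\mathrm{Outdiag}(G) \rtimes \Gamma_G \cong \Sym(4)$, which has derived length exactly $3$.
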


The famous Schreier conjecture extends the conclusion 
of Theorem \ref{simpleautthm} to all nonabelian finite simple 
groups, but this is known only as a consequence of CFSG, 
and we shall not need it. 

The structure of automorphism groups 
of direct products of nonabelian finite simple groups is folklore. 

\begin{propn} \label{prodautprop}
Let $G_1 , \ldots , G_m$ be nonabelian finite simple groups. 
Then there exists $H \leq \Sym(m)$ such that: 
\begin{center}
$\Out(G_1 \times \ldots \times G_m) 
\leq (\Out(G_1) \times \ldots \times \Out(G_m)).H$. 
\end{center}
\end{propn}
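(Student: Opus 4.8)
The plan is to realise the displayed extension explicitly through the action of $\Aut(G)$, where $G := G_1 \times \cdots \times G_m$, on the set of simple direct factors. The key structural input I would establish first is that $G_1, \ldots, G_m$ are precisely the minimal normal subgroups of $G$. Each $G_i$ is normal in $G$ and simple, hence minimal normal; conversely, if $N \trianglelefteq G$ is minimal normal then for every $i$ one has $[N, G_i] \leq N \cap G_i \trianglelefteq G_i$, so $N \cap G_i$ is $1$ or $G_i$. If $N \cap G_i = 1$ for all $i$ then $[N, G_i] = 1$ for all $i$, whence $N \leq Z(G) = 1$; so $G_i \leq N$ for some $i$, and minimality of $N$ forces $N = G_i$. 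Since this description of the set $\lbrace G_1, \ldots, G_m \rbrace$ is intrinsic, every $\phi \in \Aut(G)$ permutes it: there is $\sigma_\phi \in \Sym(m)$ with $\phi(G_i) = G_{\sigma_\phi(i)}$ for all $i$.

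Next I would introduce the homomorphism $\Psi \colon \Aut(G) \to \Sym(m)$ sending $\phi$ to $\sigma_\phi$, and set $H := \im(\Psi) \leq \Sym(m)$. The kernel $K := \ker(\Psi)$ consists of the automorphisms stabilising each $G_i$ setwise. For such a $\phi$, writing each $g \in G$ uniquely as a commuting product $g = g_1 \cdots g_m$ with $g_i \in G_i$ gives $\phi(g) = \phi_1(g_1) \cdots \phi_m(g_m)$, where $\phi_i := \phi|_{G_i} \in \Aut(G_i)$; that is, $\phi = \phi_1 \times \cdots \times \phi_m$. Conversely any tuple $(\phi_1, \ldots, \phi_m) \in \Aut(G_1) \times \cdots \times \Aut(G_m)$ defines in this way an element of $K$. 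Hence $K \cong \Aut(G_1) \times \cdots \times \Aut(G_m)$, and we obtain an exact sequence $1 \to \Aut(G_1) \times \cdots \times \Aut(G_m) \to \Aut(G) \to H \to 1$.

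Finally I would pass to outer automorphism groups. Since $Z(G_i) = 1$ for all $i$, also $Z(G) = 1$, so $\mathrm{Inn}(G) \cong G = G_1 \times \cdots \times G_m$, which under the identification above is carried onto $\mathrm{Inn}(G_1) \times \cdots \times \mathrm{Inn}(G_m) \leq K$. As $\mathrm{Inn}(G) \trianglelefteq \Aut(G)$, quotienting the exact sequence by $\mathrm{Inn}(G)$ yields an exact sequence $1 \to K/\mathrm{Inn}(G) \to \Out(G) \to H \to 1$, and $K/\mathrm{Inn}(G) \cong \bigl(\Aut(G_1) \times \cdots \times \Aut(G_m)\bigr)/\bigl(\mathrm{Inn}(G_1) \times \cdots \times \mathrm{Inn}(G_m)\bigr) = \Out(G_1) \times \cdots \times \Out(G_m)$. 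Thus $\Out(G)$ is an extension of $\Out(G_1) \times \cdots \times \Out(G_m)$ by $H \leq \Sym(m)$, which is exactly the assertion.

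I do not anticipate a genuine obstacle. The two points deserving care are the identification of the $G_i$ as the minimal normal subgroups of $G$ (so that $\Aut(G)$ really does permute them and $\Psi$ is well defined) and the routine verification that an automorphism fixing every factor setwise splits as the product of its restrictions; once these are in place the rest is formal diagram chasing with the inner automorphism subgroup.
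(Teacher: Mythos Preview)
Your argument is correct and is exactly the standard folklore proof; note that the paper does not actually supply a proof of this proposition, stating only that ``the structure of automorphism groups of direct products of nonabelian finite simple groups is folklore''. Your identification of the $G_i$ as the minimal normal subgroups of $G$, the resulting permutation action of $\Aut(G)$, and the passage to outer automorphisms are all sound and constitute the expected justification.
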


For a general finite group $G$, let $F^{*}(G)$ be the 
\emph{generalized Fitting subgroup}. 

\begin{propn} \label{fittingprop}
$C_G(F^*(G)) \leq F^*(G)$. 
In particular, if $F^*(G)$ is a product of nonabelian 
finite simple groups, then $G$ embeds as a subgroup 
of $\Aut(F^*(G))$. 
\end{propn}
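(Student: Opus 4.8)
The plan is to prove the containment $C_G(F^*(G)) \leq F^*(G)$ by exploiting the two ``halves'' of the generalized Fitting subgroup: recall $F^*(G) = F(G) E(G)$, where $F(G)$ is the Fitting subgroup (the largest nilpotent normal subgroup) and $E(G)$ is the layer (the subgroup generated by the components, i.e.\ the subnormal quasisimple subgroups), and that these two subgroups commute elementwise. First I would set $C = C_G(F^*(G))$, which is a normal subgroup of $G$, and argue by contradiction: if $C \not\leq F^*(G)$, then $C F^*(G) / F^*(G)$ is a nontrivial normal subgroup of $G/F^*(G)$, so $CF^*(G)$ properly contains $F^*(G)$ and is itself normal in $G$. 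The strategy is to locate inside $C$ a normal subgroup of $G$ which is either nilpotent or quasisimple-generated and \emph{not} already inside $F^*(G)$, contradicting the maximality built into the definition of $F^*(G)$.

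The key step is to analyze $C$ itself. Since $C \trianglelefteq G$, its own generalized Fitting subgroup $F^*(C)$ is characteristic in $C$, hence normal in $G$; and $F^*(C) = F(C) E(C)$. The crucial observation is that $C$ centralizes $F(G)$, and in particular $C$ centralizes $F(C) \leq F(G)$ --- wait, more carefully: $F(C)$ is a nilpotent normal subgroup of $C$, but it need not lie in $F(G)$ a priori; however $F(C)$ \emph{is} a nilpotent subgroup normalized by $C$ and, being characteristic in $C \trianglelefteq G$, is normal in $G$, hence $F(C) \leq F(G)$. By the same token $E(C) \leq E(G)$ since the components of $C$ are subnormal quasisimple subgroups of $G$. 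Therefore $F^*(C) \leq F^*(G)$. Now here is the contradiction: $C$ centralizes $F^*(G) \supseteq F^*(C)$, so $C$ centralizes its own generalized Fitting subgroup, i.e.\ $C_C(F^*(C)) = C$. But the whole point of $F^*(C)$ --- and this is the content of the theorem for the group $C$ in place of $G$, which I would phrase as a self-contained lemma to avoid circularity --- is that a finite group cannot centralize the entirety of its own generalized Fitting subgroup unless it \emph{equals} it. Concretely: if $C_C(F^*(C)) = C$ then $C$ is abelian (it lies in the centralizer of a subgroup containing $Z(C)$ and is abelian-by-nothing), hence $C = F(C) \leq F(G) \leq F^*(G)$, so $C \leq F^*(G)$, contradicting our assumption.

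To make the last step rigorous I would prove directly, without invoking the statement we are proving: if $A$ is a finite group with $A = C_A(F^*(A))$, then $A$ is nilpotent. Indeed $C_A(F^*(A)) = C_A(F(A)) \cap C_A(E(A))$; the equation $A = C_A(F(A))$ forces $F(A) \leq Z(A)$, and combined with $A = C_A(E(A))$ one sees $E(A) \leq Z(A)$ as well, but $E(A)$ is perfect and central hence trivial, so $F^*(A) = F(A) \leq Z(A)$ and then $A$ is nilpotent with $F^*(A) = F(A) = A$, i.e.\ $A$ is its own Fitting subgroup. The main obstacle I anticipate is the bookkeeping needed to be certain that $F(C)$ and $E(C)$ really do land inside $F(G)$ and $E(G)$ respectively --- this uses that ``characteristic in normal is normal'' and that components are preserved under passage to normal subgroups, both standard but worth stating carefully. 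Finally, for the ``in particular'' clause: if $F^*(G)$ is a direct product of nonabelian finite simple groups, then $Z(F^*(G)) = 1$, so $C_G(F^*(G)) \leq F^*(G)$ gives $C_G(F^*(G)) \leq Z(F^*(G)) = 1$; thus conjugation gives an injection $G \hookrightarrow \Aut(F^*(G))$, as claimed.
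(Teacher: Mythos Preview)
The paper does not give a proof of this proposition: it is stated as a standard structural fact about the generalized Fitting subgroup (Bender's theorem; see for instance Aschbacher, \emph{Finite Group Theory}, (31.13)), so your write-up must stand on its own.

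Your architecture is the correct one, and the reduction is sound: you verify $F(C)\leq F(G)$ and $E(C)\leq E(G)$ via ``characteristic in normal is normal'', conclude $F^*(C)\leq F^*(G)$, and hence that $C=C_G(F^*(G))$ centralizes its own generalized Fitting subgroup. The ``in particular'' clause is also handled correctly.

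The genuine gap is in your proof of the sublemma. From $A=C_A(F^*(A))$ you correctly deduce $E(A)=1$ and $F^*(A)=F(A)=Z(A)$, but the sentence ``and then $A$ is nilpotent with $F^*(A)=F(A)=A$'' is exactly the assertion to be proved, not a consequence of what precedes it. Knowing only $E(A)=1$ does \emph{not} force $A$ to be solvable (for instance $A=V\rtimes S$ with $S$ nonabelian simple acting faithfully and irreducibly on an $\mathbb{F}_p$-module $V$ has no components), so you must genuinely use the hypothesis $F(A)=Z(A)$. The standard completion is to pass to $\bar A=A/Z(A)$ and argue that $\bar A=1$: if $F(\bar A)\neq 1$, its preimage in $A$ is a central-by-nilpotent, hence nilpotent, normal subgroup of $A$, so lies in $F(A)=Z(A)$, a contradiction; if instead $\bar A$ has a component $\bar K$, then the last term $K^{\infty}$ of the derived series of the preimage $K$ is perfect, subnormal in $A$, and satisfies $K^{\infty}/(K^{\infty}\cap Z(A))\cong \bar K$, whence $K^{\infty}/Z(K^{\infty})$ is simple and $K^{\infty}$ is a component of $A$, contradicting $E(A)=1$. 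Since every nontrivial finite group has $F^*\neq 1$, one of these two cases must occur, so $\bar A=1$ and $A$ is abelian. Once you insert this paragraph, your argument is complete and noncircular.
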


\subsection{Algebraic Groups and Chevalley Groups} \label{AlgGrpSubsect}

We recall some concepts from the theory of algebraic groups, 
well-covered in any standard text on the subject 
such as \cite{MalTes}. 

Let $\mathbb{G}$ be a connected semisimple linear algebraic group 
of rank $l$ over an algebraically closed field $K$ 
(so that $\mathbb{G}$ is isomorphic to a closed subgroup 
of $\GL_d (K)$). 
An element of $\mathbb{G}$ is \emph{semisimple} if it is 
diagonalizable. 
A \emph{torus} in $\mathbb{G}$ is a connected subgroup consisting 
of simultaneously diagonalizable elements; 
a subgroup of $\mathbb{G}$ is a torus iff it 
is isomorphic to $(K^{\ast})^m$, for some $m \in \mathbb{N}$. 
A \emph{maximal torus} in $\mathbb{G}$ is a torus of maximal dimension; 
$\mathbb{G}$ contains at least one maximal torus, 
all maximal tori are conjugate in $\mathbb{G}$, 
and every torus is contained in at least one maximal torus. 
The rank $l$ of $\mathbb{G}$ is by definition the common dimension 
of the maximal tori in $\mathbb{G}$; hence a torus in $\mathbb{G}$ is maximal 
iff it is isomorphic to $(K^{\ast})^l$. 

Let $\mathfrak{g}$ be the Lie algebra of $\mathbb{G}$ 
and let $T$ be a maximal torus of $\mathbb{G}$. 
Let $X(T) = \Hom (T,K^{\ast})$ be the \emph{character group} of $T$. 
$X(T)$ is a group under pointwise multiplication; 
since $T \cong (K^{\ast})^l$, $X(T) \cong \mathbb{Z}^l$. 
For $\alpha \in X(T)$, let: 
\begin{center}
$\mathfrak{u}_{\alpha} = \lbrace x \in \mathfrak{g} 
: \forall t \in T, (\Ad (t))(x) = \alpha (t)x \rbrace$. 
\end{center}
Let $\Phi (T,\mathbb{G}) = \lbrace \alpha \in X(T) 
: \mathfrak{u}_{\alpha} \neq 0 \rbrace$ be the set of 
\emph{roots of $\mathbb{G}$ relative to $T$}. 
Embedding $X(T)$ into $X(T) \otimes_{\mathbb{Z}} \mathbb{R}$, 
identified with Euclidean $l$-space, 
$\Phi (T,\mathbb{G})$ is an abstract root system 
(see \cite{MalTes} \textsection 8.1, \textsection 9.1). 

For each $\alpha \in \Phi (T,\mathbb{G})$, 
there is a unique closed connected one-dimensional subgroup 
$U_{\alpha}$ of $\mathbb{G}$ with Lie algebra $\mathfrak{u}_{\alpha}$, 
normalized by $T$ (the \emph{root subgroup} corresponding to $\alpha$). 
There is an isomorphism $\lambda \mapsto x_{\alpha} (\lambda)$ 
from $(K,+)$ to $U_{\alpha}$ satisfying: 
\begin{equation} \label{RootActEqn}
t x_{\alpha} (\lambda) t^{-1} = x_{\alpha} (\alpha (t)\lambda)
\end{equation}
for all $t \in T$ (see \cite{MalTes} Theorem 8.17). 

It is clear that every semisimple $s$ element of $\mathbb{G}$ 
is contained in at least one maximal torus. 
The element $s$ is called \emph{regular} if it lies in a unique maximal torus. 

\begin{propn}[\cite{MalTes} Corollary 14.10] \label{mainregeltpropn}
For $s \in \mathbb{G}$ semisimple, and $T$ a maximal torus in 
$\mathbb{G}$ containing $s$, 
the following conditions are equivalent. 
\begin{itemize}
\item[(i)] $s$ is regular; 
\item[(ii)] For  every root $\alpha$ of $\mathbb{G}$ relative to $T$, 
$\alpha (s) \neq 1$. 
\end{itemize}
\end{propn}

\begin{propn} \label{RegTorusNormCoroll}
Let $s \in \mathbb{G}$ be regular semisimple, 
let $T$ be the unique maximal torus of $\mathbb{G}$ containing $s$ 
and let $g \in \mathbb{G}$ be such that $s^g \in T$. 
Then $g \in N_{\mathbb{G}} (T)$. 
In particular $C_{\mathbb{G}} (s) \subseteq N_{\mathbb{G}} (T)$. 
\end{propn}

\begin{proof}
$T^g$ is also a maximal torus of $\mathbb{G}$ and $s^g$ is regular semisimple. 
By uniqueness of $T^g$, $T = T^g$. 
\end{proof}

\begin{thm} \label{WeylThm}
For every maximal torus $T$ of $\mathbb{G}$, 
$N_{\mathbb{G}} (T) / T$ is isomorphic to the Weyl group 
$W_{\mathbb{G}}$ of $\mathbb{G}$. 
\end{thm}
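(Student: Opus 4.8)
The plan is to exploit the concrete set-up already in place — the root subgroups $U_\alpha$, the elements $h_\alpha(\mu)$, and the homomorphisms $\SL_2(K)\to\langle U_\alpha,U_{-\alpha}\rangle$ — and to identify $N_{\mathbb{G}}(T)/T$ with $W_{\mathbb{G}}$ via the action on the root system. Since all maximal tori of $\mathbb{G}$ are conjugate, it suffices to treat $T=T_0$, so that $\Phi=\Phi(T_0,\mathbb{G})$ and $W_{\mathbb{G}}=\langle s_\alpha:\alpha\in\Phi\rangle$ in the notation above, where $s_\alpha$ denotes the reflection in $\alpha$.

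First I would note that conjugation gives an action of $N:=N_{\mathbb{G}}(T_0)$ on $X(T_0)$, hence a homomorphism $\phi\colon N\to\GL(X(T_0))$; because conjugation by $n\in N$ carries $U_\alpha$ onto the root subgroup $U_{\phi(n)\alpha}$ (read off from $t\,x_\alpha(\lambda)\,t^{-1}=x_\alpha(\alpha(t)\lambda)$), the image of $\phi$ permutes $\Phi$. The kernel of $\phi$ is $C_{\mathbb{G}}(T_0)$, which equals $T_0$ by the standard fact that a maximal torus is self-centralizing (the centralizer of a torus is connected reductive, and here it has empty root system). Thus $\phi$ induces an injection $\bar{\phi}\colon N/T_0\hookrightarrow\Aut(\Phi)$. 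Next, for each $\alpha\in\Phi$ let $n_\alpha$ be the image of $\left(\begin{smallmatrix}0 & 1\\ -1 & 0\end{smallmatrix}\right)$ under $\SL_2(K)\to\langle U_\alpha,U_{-\alpha}\rangle$; a direct $\SL_2$-calculation (using that $\langle U_\alpha,U_{-\alpha}\rangle$ centralizes $(\ker\alpha)^{\circ}$ and that $n_\alpha$ conjugates each $h_\beta(\nu)$ to another element of $T_0$) shows $n_\alpha$ normalizes $T_0$ with $\bar{\phi}(n_\alpha T_0)=s_\alpha$. Hence $W_{\mathbb{G}}\subseteq\bar{\phi}(N/T_0)$, and everything comes down to the reverse inclusion.

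For the reverse inclusion I would bring in a Borel subgroup $B=T_0U$ containing $T_0$, with unipotent radical $U$. Two imported facts do the work: (a) $N\cap B=T_0$ — if $n=tu$ with $t\in T_0$, $u\in U$, then $u=t^{-1}n\in N\cap U$, so $u^{-1}\tau u\in T_0$ for all $\tau\in T_0$, forcing $[\tau^{-1},u^{-1}]\in T_0\cap U=\{1\}$, whence $u$ centralizes $T_0$ and $u\in C_{\mathbb{G}}(T_0)\cap U=\{1\}$; and (b) $W_{\mathbb{G}}$ acts simply transitively on the bases of $\Phi$, equivalently on the Borel subgroups containing $T_0$. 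Given $n\in N$, the automorphism $\bar{\phi}(n)$ sends the base $\Delta$ attached to $B$ to some base, so by (b) there is $w\in W_{\mathbb{G}}$ with $w\,\bar{\phi}(n)$ fixing $\Delta$; choosing $m$ in the subgroup generated by the $n_\alpha$ with $\bar{\phi}(mT_0)=w$, the element $mn$ normalizes $B$, hence lies in $N_{\mathbb{G}}(B)\cap N=B\cap N=T_0$ by (a) (using that Borels are self-normalizing). Therefore $\bar{\phi}(n)=w^{-1}\in W_{\mathbb{G}}$, and combining the two inclusions, $\bar{\phi}$ restricts to an isomorphism $N/T_0\to W_{\mathbb{G}}$.

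The genuinely structural inputs are the self-centralizing property $C_{\mathbb{G}}(T_0)=T_0$, the self-normalizing property $N_{\mathbb{G}}(B)=B$, and the simple transitivity of $W_{\mathbb{G}}$ on the Borels containing $T_0$ (which in some treatments is itself close to half of this theorem); these I would simply cite from the standard references. The remaining work — the $\SL_2$-bookkeeping showing $n_\alpha$ normalizes $T_0$ and realizes $s_\alpha$ — is routine. Indeed the whole statement is classical and could be quoted directly from Carter or Springer; the above is the self-contained route I would take if an argument from the present set-up were wanted.
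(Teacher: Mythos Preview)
The paper does not prove this theorem at all: it appears in the appendix as a piece of classical background, stated without proof and implicitly referred to standard sources such as \cite{Carter}. Your proposal is a correct outline of the standard textbook argument (essentially the one in Springer or Carter), and you yourself note in the last paragraph that the result ``is classical and could be quoted directly from Carter or Springer''. So there is nothing to compare against; your sketch is fine, but for the purposes of this paper the statement is simply cited.
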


\begin{proof}
By \cite{MalTes} Corollary 6.5, all maximal tori are conjugate in 
$\mathbb{G}$, so the isomorphism-type of $N_{\mathbb{G}} (T) / T$ 
is independent of $T$. 
\end{proof}

We recall the construction of the Chevalley groups given in \cite{Carter}
and use this to give an explicit linear representation 
of the untwisted finite simple groups of Lie type. 
Let $K = \overline{\mathbb{F}_p}$ be the algebraic closure of 
the finite field with $p$ elements, 
and let $\Phi$ be an irreducible root system 
of rank $l$, with fundamental system of roots $\Pi$. 
Let $\mathcal{L}_K$ be the $K$-Lie algebra 
defined in \cite{Carter} \textsection 4.4; 
$\mathcal{L}_K$ has a \emph{Chevalley basis} 
$\lbrace h_{\alpha} : \alpha \in \Pi \rbrace 
\cup \lbrace e_{\beta} : \beta \in \Phi \rbrace$, 
with respect to which the structure constants 
are as in \cite[Theorem 4.2.1]{Carter}. 

Let $\mathcal{L} (K) \leq \GL_{\lvert \Phi \rvert + \lvert \Pi \rvert} (K)$ 
be the group generated by the automorphisms 
$\lbrace x_{\beta}(\lambda) : \beta \in \Phi, \lambda \in K \rbrace$ 
of the Lie algebra $\mathcal{L}_K$ 
(the $x_{\beta}(\lambda)$ are defined in \cite{Carter} \textsection 4.4). 
Then $\mathcal{L}$ is a simple adjoint algebraic group of rank $l$ and 
$\mathcal{L}_K$ is its Lie algebra. 

\begin{defn} \label{UntwistedGrpDefn}
For $q$ a power of $p$, 
the (untwisted) finite simple group of type $\Phi$ over the field $\mathbb{F}_q$ 
is the subgroup $\mathcal{L} (\mathbb{F}_q)$ of 
$\mathcal{L} (K)$ 
generated by $\lbrace x_{\beta}(\lambda) 
: \beta \in \Phi, \lambda \in \mathbb{F}_q \rbrace$. 
\end{defn}

We describe an explicit maximal torus in $\mathcal{L} (K)$. 
By Proposition 6.4.1 of \cite{Carter}, 
for $\alpha \in \Pi$ there is a homomorphism 
$\phi_{\alpha} : K^{\ast} \rightarrow \mathcal{L} (K)$ given by: 
\begin{equation} \label{Chevtorusactioneqn}
\phi_{\alpha} (z) h_{\gamma} = h_{\gamma} \text{ for $\gamma \in \Pi$, }
\phi_{\alpha} (z) e_{\beta} = z^{A_{\alpha,\beta}} e_{\beta} \text{ for $\beta \in \Phi$,}
\end{equation}
where, for $\alpha,\beta \in \Phi$, $A_{\alpha,\beta}
= 2 (\alpha,\beta)/(\alpha,\alpha) \in \mathbb{Z}$ 
is the \emph{Cartan integer}. 
By Lemma 6.4.4 of \cite{Carter}, $\phi_{\alpha} (z) \in \mathcal{L}(K)$ 
may be explicitly realized as: 
\begin{equation} \label{ChevtorusWordeqn}
\phi_{\alpha} (z) = x_{\alpha}(z) x_{-\alpha}(-z^{-1}) x_{\alpha}(z) 
x_{\alpha}(-1) x_{-\alpha}(1) x_{\alpha}(-1)
\end{equation}
By (\ref{Chevtorusactioneqn}), for any $z \in K^{\ast}$, 
$\phi_{\alpha} (z)$ is a diagonal matrix and 
we have a homomorphism $\phi : (K^{\ast})^{\Pi} \rightarrow \mathcal{L} (K)$ 
defined, for $\underline{z} = (z_{\alpha})_{\alpha \in \Pi} \in (K^{\ast})^{\Pi}$, 
by: 
\begin{equation*} 
\phi (\underline{z}) = \prod_{\alpha \in \Pi} \phi_{\alpha} (z_{\alpha})
\end{equation*}
so that: 
\begin{equation*}
\phi (\underline{z}) h_{\gamma} = h_{\gamma} \text{ for $\gamma \in \Pi$, }
\phi (\underline{z}) e_{\beta} = \phi (\underline{z})_{\beta} e_{\beta}, 
\end{equation*}
where we write: 
\begin{equation} \label{charactereqn}
\phi (\underline{z})_{\beta} = \prod_{\alpha \in \Pi} z_{\alpha} ^{A_{\alpha,\beta}}
\end{equation}
for $\beta \in \Phi$. 
From this description, it is clear that $\phi$ 
is a morphism of algebraic groups. 

\begin{lem} \label{morphfinkerlem}
The morphism $\phi$ has finite kernel. 
\end{lem}

\begin{proof}
The Cartan matrix $A = (A_{\alpha,\beta})_{\alpha,\beta \in \Pi}$ is nonsingular 
(this is clear from inspection of the tables in \cite[Section 3.6]{Carter}; 
alternatively an inverse for $A$ in $\GL_{\Pi}(\mathbb{Q})$ 
is calculated explicitly in \cite[\textsection1.6]{Rosen}; 
see (1.157) and (1.158) therein). 
Putting $A$ into Smith Normal Form, 
there exist $P,Q \in \GL_{\Pi} (\mathbb{Z})$ 
and $0 \neq D_{\alpha} \in \mathbb{Z}$ such that: 
\begin{equation*}
(PAQ)_{\alpha,\beta} 
= \Big\{ \begin{array}{cc} D_{\alpha} & \alpha = \beta \\
0 & \alpha \neq \beta 
\end{array}
\end{equation*}
For $B \in \GL_{\Pi}(\mathbb{Z})$, define the morphism 
$\psi^{(B)} : (K^{\ast})^{\Pi} \rightarrow (K^{\ast})^{\Pi}$ by: 
\begin{equation*}
\psi^{(B)}(\underline{z})_{\alpha}=\prod_{\beta\in\Pi}z_{\beta} ^{B_{\beta,\alpha}}
\end{equation*}
so that for $B,C \in \GL_{\Pi}(\mathbb{Z})$, 
$\psi^{(BC)} = \psi^{(C)} \circ \psi^{(B)}$ 
and $\psi^{(I)} = \Id$. 
Then:
\begin{equation} \label{CartanVecEqn}
(\psi^{(Q)} \circ \psi^{(A)} \circ \psi^{(P)})(\underline{z})_{\alpha}
= z_{\alpha} ^{D_{\alpha}} 
\text{ and } \psi^{(A)} (\underline{z})_{\alpha} = \phi(\underline{z})_{\alpha}
\end{equation}
so if $\psi^{(P)} (\underline{z}) \in \ker (\phi)$, 
then $z_{\alpha}$ is a $D_{\alpha}$th root of unity, for all $\alpha \in \Pi$. 
However $\psi^{(P)}$ is invertible, with inverse $\psi^{(P^{-1})}$, so 
$\lvert \ker (\phi) \rvert \leq \prod_{\alpha \in \Pi} D_{\alpha}$. 
\end{proof}

\begin{propn} \label{ChevMaxTorusProp}
The image $T_0 = \im (\phi)$ of $\phi$ is a maximal torus in $\mathcal{L} (K)$. 
\end{propn}

\begin{proof}
$\im (\phi)$ is a closed connected abelian subgroup of $\mathcal{L} (K)$ 
(being the image of such under a morphism of algebraic groups). 
It consists of diagonal matrices, hence is a torus, 
so is contained in a maximal torus $T$ 
(of dimension $l$, the rank of $\mathcal{L} (K)$). 
Since $\ker (\phi)$ is finite, $\im (\phi) \subseteq T$ also 
has dimension $l$, hence they are equal. 
\end{proof}

Consider $T_0 (q) = T_0 \cap \mathcal{L} (\mathbb{F}_q)$. 
By (\ref{ChevtorusWordeqn}), 
$T_0 (q)$ contains $\phi(\underline{z})$ 
for every $\underline{z} \in (\mathbb{F}_q ^{\ast})^{\Pi}$, 
so $\lvert T_0 (q) \rvert \geq (q-1)^l / \lvert \ker (\phi) \rvert$. 
In the other direction, 
using the notation of Lemma \ref{morphfinkerlem}, 
suppose $\underline{z} \in (K^{\ast})^{\Pi}$ 
with $\phi (\underline{z}) \in T_0 (q)$. 
Writing $\underline{z} = \psi^{(P)} (\underline{y})$, 
by (\ref{CartanVecEqn}) we have 
$y_{\alpha} ^{D_{\alpha}} \in \mathbb{F}_q ^{\ast}$ for all $\alpha \in \Pi$, 
hence: 
\begin{equation} \label{torusorderineq}
(q-1)^l / \lvert \ker (\phi) \rvert 
\leq \lvert T_0 (q) \rvert 
\leq (q-1)^l \prod_{\alpha \in \Pi} D_{\alpha}
\end{equation}
Now, the roots of $\mathcal{L} (K)$ relative to $T_0 = \im(\phi)$ 
are indexed by the elements of $\Phi$: 
for each $\beta \in \Phi$ there is 
a morphism $T_0 \rightarrow K^{\ast}$ 
(which we also denote by $\beta$) given by: 
\begin{equation*}
\beta\big(\phi(\underline{z})\big)=\phi(\underline{z})_{\beta}
\end{equation*}
(see (\ref{charactereqn})). 
Thus $\langle e_{\beta} \rangle \subseteq \mathfrak{u}_{\beta}$, 
and since the maps $\beta : T_0 \rightarrow K^{\ast}$ 
are all distinct, nontrivial, and $T_0$ acts trivially on 
$\langle h_{\alpha} : \alpha \in \Pi \rangle$, we have equality. 
The root subgroup corresponding to $\beta$ 
is precisely $\lbrace x_{\beta}(\lambda) : \lambda \in K \rbrace$; 
in accordance with (\ref{RootActEqn}), 
\begin{equation*}
\phi(\underline{z}) x_{\beta}(\lambda) \phi(\underline{z})^{-1} 
= x_{\beta}\big( \beta\big(\phi(\underline{z})\big) \lambda \big)
\end{equation*}
(see \cite{Carter} \textsection 7.1). 

\begin{propn} \label{RegTorusOrderProp}
Let $R(q)=\lbrace s \in T_0 (q):s\text{ is regular in }\mathcal{L}(K) \rbrace$. 
Then: 
\begin{equation}
\lvert T_0 (q) \setminus R(q) \rvert = O_{\Phi} \big( \lvert T_0 (q) \rvert/q \big)\text{.}
\end{equation}
\end{propn}

\begin{proof}
Since $\Phi$ lies in the $\mathbb{R}$-span of $\Pi$, 
for every $\beta \in \Phi$ there exists $\alpha \in \Pi$ such that 
$A_{\alpha,\beta} \neq 0$. 
Then for $z \in \mathbb{F}_q ^{\ast}$, 
$\beta \big( \phi \big( (z^{\delta_{\alpha,\gamma}})_{\gamma \in \Pi} \big) \big)= z^{A_{\alpha,\beta}}$. 
Letting $N(\beta) = \hcf (\lbrace A_{\alpha,\beta} : \alpha \in \Pi \rbrace 
\setminus \lbrace 0 \rbrace )$, 
$\lvert \im \big(\beta|_{T_0(q)}  \big) \rvert \geq (q-1)/ N (\beta)$. 
By (\ref{torusorderineq}): 
\begin{equation*}
\lvert T_0 (q)\cap\ker (\beta)\rvert\leq (q-1)^{l-1} D N(\beta) 
\end{equation*}
where $D = \prod_{\alpha \in \Pi} D_{\alpha}$. 
Using the criterion for regularity from Proposition \ref{mainregeltpropn} (ii), 
and (\ref{torusorderineq}), we have: 
\begin{equation*}
\lvert T_0 (q) \setminus R(q) \rvert 
\leq (q-1)^{l-1} D \sum_{\beta\in\Phi} N(\beta) 
\text{ while } 
\lvert T_0(q) \rvert \geq (q-1)^l / \lvert \ker (\phi) \rvert
\end{equation*}
as desired. 
\end{proof}

\subsection{Proof of Proposition \ref{orderdivpropn}}

Let us recall the result, which it is our objective to prove here. 

\begin{propn} 
For $G$ a finite group and $m \in \mathbb{N}$, let: 
\begin{center}
$E_G (m) = \lbrace g \in G: o (g) \text{ divides } m \rbrace$. 
\end{center}
Let $G = X(q)$ be as in Theorem \ref{strongmainthm}. 
Define $b(X,q) \in \mathbb{N}$ to be: 
$q+1$ for $X = {^2} A_l$ or $^2 E_6$; 
$q^2 - 1$ for $X = {^2} D_l$; 
$q^2 - q + 1$ for $X = {^3} D_4$, 
or $q-1$ otherwise. Then: 
\begin{equation*}
\lvert E_G \big( b(X,q) \big) \rvert  = \Omega_X (\lvert G \rvert)\text{.}
\end{equation*}
\end{propn}

\begin{lem} \label{regcentlem}
Let $\Delta_n (q) \leq \GL_n (q)$ be the subgroup of diagonal matrices. 
Let $g \in \Delta_n (q)$ and suppose $g$ has no repeated eigenvalues. 
Then $C_{\GL_n (q)} (g) = \Delta_n (q)$. 
More generally let $g \in \GL_n (q)$ have the form: 
\begin{center}
$g = \left( \begin{array}{cc} I_m & 0 \\ 0 & h \end{array} \right)$
\end{center} 
for some $h \in \Delta_{n-m} (q)$ with no $1$-eigenvectors 
no repeated eigenvalues. Then any $c \in C_{\GL_n (q)} (g)$ 
has the form: 
\begin{equation*}
c = \left( \begin{array}{cc} k & 0 \\ 0 & d \end{array} \right) 
\end{equation*}
for some $k \in \GL_m (q)$ and $d \in \Delta_{n-m} (q)$. 
\end{lem}

\begin{lem} \label{orderdivquotlem}
Let $G$ be a finite group and let $N \vartriangleleft G$. 
Then for any $m \in \mathbb{N}$, 
\begin{center}
$\lvert E_{G/N}(m) \rvert / \lvert G/N \rvert 
\geq \lvert E_G(m) \rvert / \lvert G \rvert$
\end{center}
\end{lem}

\begin{proof}
If $g \in E_G(m)$, then $gN \in E_{G/N}(m)$, 
so $E_{G/N}(m)$ is the union of all cosets of $N$ intersecting $E_G(m)$. 
Thus $\lvert E_{G/N}(m) \rvert \geq \lvert E_G(m) \rvert / \lvert N \rvert$. 
\end{proof}

\begin{lem} \label{SchwZipLem}
Let $l,m \in \mathbb{N}$; 
for $1 \leq i \leq l$ and $1 \leq j \leq m$ 
let $a_i ^{(j)} \in \mathbb{Z}$, 
$\lambda^{(j)} \in \mathbb{F}_q ^{\ast}$ and: 
\begin{center}
$f^{(j)} (X_1,\ldots,X_l) 
= \lambda^{(j)} X_1 ^{a_1 ^{(j)}} \cdots X_l ^{a_l ^{(j)}} 
\in \mathbb{F}_q \lbrack X_1 ^{\pm 1} , \ldots , X_l ^{\pm 1} \rbrack$. 
\end{center}
Suppose that the $f^{(j)}$ are all distinct, 
and none is identically $1$. 
Let $S \subseteq \mathbb{F}_q$ and $D>0$, 
and suppose that for all $i$ and $j$, 
$\lvert a_i ^{(j)} \rvert \leq D$. 
Then there is a subset $A \subseteq S^l$ such that: 
\begin{itemize}
\item[(i)] $\lvert A \rvert \geq \big( \lvert S \rvert - Dm(m+1) \big)^l$; 
\item[(ii)] For all $\mathbf{a} \in A$, 
the $f^{(j)} (\mathbf{a}) \in \mathbb{F}_q$ are distinct and $\neq 1$. 
\end{itemize}
\end{lem}

\begin{proof}
We proceed by induction on $l$. 
If $l=1$ then for $s \in S$, 
if $f^{(j)} (s) = f^{(k)} (s)$ 
or $f^{(j)} (s) = 1$ 
for some $j\neq k$, 
then $s$ satisfies one of a set of $m(m+1)/2$ nontrivial polynomials 
of degree at most $2D$, 
so there are at most $Dm(m+1)$ possibilities for $s$. 

For $l \geq 2$, for at least $\lvert S \rvert - Dm(m+1)$ values $s_l \in S$, 
for any $1 \leq j,k \leq m$, 
$\lambda^{(j)} s_l ^{a_l ^{(j)}} = \lambda^{(k)} s_l ^{a_l ^{(k)}}$ 
implies $\lambda^{(j)} = \lambda^{(k)}$ and $a_l ^{(j)} = a_l ^{(k)}$ 
(by the preceding paragraph). 
Given such a value $s_l$, 
the Laurent polynomials: 
\begin{center}
$f^{(j)} (X_1,\ldots,X_{l-1},s_l) \in 
\mathbb{F}_q \lbrack X_1 ^{\pm 1} , \ldots , X_{l-1} ^{\pm 1} \rbrack$
\end{center}
are all distinct. 
Likewise for such values $s_l$, 
$\lambda^{(j)} s_l ^{a_l ^{(j)}} = 1$ 
implies $\lambda^{(j)} = 1$ and $a_l ^{(j)} = 0$, 
so for such values $s_l$, 
no $f^{(j)} (X_1,\ldots,X_{l-1},s_l)$ 
is identically $1$. 
\end{proof}

\begin{rmrk} \label{SchwZipRmrk}
In all cases in which we apply Lemma \ref{SchwZipLem}, 
either (i) $\lvert S \rvert = \Omega (q)$ and $D = O(\sqrt{q})$, 
or (ii) $\lvert S \rvert = \Omega (\sqrt{q})$ and $D$ is bounded. 
Under either of these conditions, it follows that 
for all $\epsilon > 0$, if $q = \Omega_{\epsilon,l,m} (1)$ then: 
\begin{equation} 
\label{SchwZipEqn}
\lvert A \rvert \geq (1-\epsilon) \lvert S \rvert^l\text{.}
\end{equation}
In particular, $\lvert A \rvert = \Omega_{l,m} (\lvert S \rvert ^l)$. 
\end{rmrk}

\subsubsection{Untwisted groups}

Here $G = \mathcal{L} (\mathbb{F}_q)$ 
is as in Definition \ref{UntwistedGrpDefn}. 
Let $T_0$ be as in Proposition \ref{ChevMaxTorusProp}; 
let $T_0 (q) = T_0 \cap G$, 
and let $R(q) \subseteq T_0 (q)$ be the set of regular elements. 
Note that any conjugate in $G$ of an element of $T_0 (q)$ 
has order dividing $q-1$, 
so the conclusion of Proposition \ref{orderdivpropn} 
reduces to the following. 

\begin{propn}
Let $G$, $R(q)$ be as above. Then: 
\begin{equation} \label{RegCCLEqn}
\big\lvert \bigcup_{r \in R(q)} \ccl_G (r) \big\rvert 
= \Omega_{\Phi} (\lvert G \rvert)\text{.}
\end{equation}
\end{propn}

\begin{proof}
If $r_1 , r_2 \in R(q)$ and $g_1 , g_2 \in G$ are such that 
$r_1 ^{g_1} = r_2 ^{g_2}$, 
then by Proposition \ref{RegTorusNormCoroll} $g_1 g_2 ^{-1} \in N_G (T_0)$ 
so the fibres of the map sending 
$(r,g) \in R(q) \times G$ to $r^g$ are of size bounded by 
$\lvert N_G (T_0) \rvert \leq \lvert W \rvert \cdot \lvert T_0 (q) \rvert$ 
(by Theorem \ref{WeylThm}). Thus: 
\begin{equation*}
\big\lvert \bigcup_{r \in R(q)} \ccl_G (r) \big\rvert \geq \frac{\lvert R(q) \rvert}{\lvert W \rvert \cdot \lvert T_0 (q) \rvert} \lvert G \rvert
\end{equation*}
and the result follows, since by Proposition \ref{RegTorusOrderProp}, 
$\lvert R(q) \rvert / \lvert T_0 (q) \rvert = \Omega_{\Phi} (1)$. 
\end{proof}

\begin{rmrk} \label{twistedorderrmrk}
The bound (\ref{RegCCLEqn}) establishes Proposition \ref{orderdivpropn} 
for the groups $X(q)$ with $X=A_l$, $B_l$, $C_l$, $D_l$, 
$E_6$, $E_7$, $E_8$, $F_4$ or $G_2$. 
The basic strategy for the other families will be the same: 
in each case we produce $R(q) \subseteq G = X(q)$ such that: 
(a) for all $r \in R(q)$, $o(r)$ divides $b(X,q)$; 
(b) the fibres of the map sending 
$(r,g)\in R(q) \times G$ to $r^g$ are of size $O_X\big( \lvert R(q)\rvert \big)$. 
Note in particular that (b) holds whenever, for all $r \in R(q)$, 
(b)(i) $\lvert \ccl_G(r) \cap R(q) \rvert = O_X (1)$, 
and (b)(ii) $\lvert C_G (r) \rvert = O_X (\lvert R(q) \rvert)$. 
As above we have a bound of the form (\ref{RegCCLEqn}), 
from which the result follows. 
\end{rmrk}

\subsubsection{Unitary groups}

For ${^2} A_l (q) = \PSU_{l+1} (q)$, 
using Lemma \ref{orderdivquotlem} it suffices 
to show that $G = \SU_{l+1} (q)$ satisfies 
$\lvert E_G (q+1)\rvert = \Omega_l (\lvert G \rvert)$. 
Recall that $\GU_{l+1} (q)$ is the subgroup of the group $\GL_{l+1} (q^2)$ 
preserving a non-degenerate Hermitian form 
on $\mathbb{F}_{q^2} ^{l+1}$. 
Up to similarity, there is only one such form, given by $I_{l+1}$. 
Given $\underline{\lambda} \in \mathbb{F}_{q^2} ^{l+1}$, 
$\diag(\underline{\lambda}) \in \GU_{l+1} (q)$ iff 
$\lambda_1 , \ldots , \lambda_{l+1} \in S$, 
the set of $(q+1)$th roots of unity in $\mathbb{F}_{q^2}$. 
For $\diag(\underline{\lambda}) \in \GU_{l+1} (q)$ iff: 
\begin{center}
$I_{l+1} = \phi (\diag(\underline{\lambda}))^t \diag(\underline{\lambda})
 = \diag(\underline{\lambda})^{q+1}$
\end{center}
where $\phi : \GL_{l+1} (q^2) \rightarrow \GL_{l+1} (q^2)$ 
is the Frobenius automorphism given by $\phi(g)_{i,j} = g_{i,j} ^q$. 

Thus $\diag(\underline{\lambda}) \in \SU_{l+1} (q)$ 
iff $\lambda_1 , \ldots , \lambda_l \in S$ 
and $\lambda_{l+1} = (\lambda_1 \cdots \lambda_l)^{-1}$. 
We apply Lemma \ref{SchwZipLem} with $m=l+1$; 
$S$ as above; $D=1$, and: 
\begin{center}
$f^{(1)}=X_1,\ldots ,f^{(l)}=X_l,f^{(l+1)}=(X_1 \cdots X_l)^{-1}$. 
\end{center}
Using Remark \ref{SchwZipRmrk}, we see that the set 
$R(q) \subseteq \SU_{l+1} (q)$ of diagonal matrices 
without repeated eigenvalues has size $\Omega_{l} \big( (q+1)^l \big)$. 
We check the conditions of Remark \ref{twistedorderrmrk}. 
Certainly any element of $R(q)$ 
has order dividing $q+1$. 
Since any two conjugate matrices have the same eigenvalues we have (b)(i), 
and by Lemma \ref{regcentlem} the centralizer of any $r \in R(q)$ 
consists of diagonal elements of $\SU_{l+1}(q)$ (of which there are $(q+1)^{l}$), 
whence (b)(ii). 
Finally we pass from $\SU_{l+1}(q)$ to 
$\PSU_{l+1}(q)$ using Lemma \ref{orderdivquotlem}. 

\subsubsection{Orthogonal groups of type ${^2}D_l$}

Note that $^2 D_l(q)= P \Omega_n ^- (q)$, 
where $n = 2l$. 
By Lemma \ref{orderdivquotlem} it suffices to show that 
$G=\Omega_n ^- (q)$ satisfies $\lvert E_G(q^2 -1) \rvert =\Omega_n (\lvert G\rvert)$. 
We start by recalling some background information 
on the forms preserved by these groups, taken from \cite{Carter}. 
If $q$ is odd then the orthogonal group 
${\rm O}_n ^- (q)$ is the group of isometries 
of the symmetric bilinear form $B^-$ 
on $\mathbb{F}_q ^n$, 
represented by: 
\begin{center}
$\left( \begin{array}{cccc} 
 0 & I_{l-1} & 0 & 0 \\
I_{l-1} & 0 & 0 & 0 \\
0 & 0 & 1 & 0 \\
0 & 0 & 0 & -\gamma \end{array} \right)
 \in \mathbb{M}_{n} (\mathbb{F}_q)$ 
\end{center}
with respect to the standard ordered basis 
$\mathcal{B} = \lbrace b_i \rbrace_{i=1} ^n$ 
of $\mathbb{F}_q ^n$, 
where $\gamma \in \mathbb{F}_q$ is a non-square. 
If on the other hand $q$ is even then 
${\rm O}_n ^- (q)$ is the group of isometries 
of the quadratic form on $\mathbb{F}_q ^n$, 
represented by: 
\begin{align*}
f^- (\underline{x}) & = \sum_{i=1} ^{l-1} x_{i} x_{l-1+i} 
+ (x_{2l-1} - \alpha x_{2l})(x_{2l-1} - \overline{\alpha} x_{2l})
\end{align*}
where $\alpha$ generates $\mathbb{F}_{q^2}$ over $\mathbb{F}_q$ 
(note that $f^-$ is nonetheless defined over $\mathbb{F}_q$). 
In this case, let $B^{-}$ be the \emph{polar form} 
of a quadratic form $f^{-}$, given by: 
\begin{center}
$B^{-}(\underline{x},\underline{y})=f^{-}(\underline{x}+\underline{y})
-f^{-}(\underline{x})-f^{-}(\underline{y})$. 
\end{center}
Then $B^{-}$ is a symmetric bilinear form 
preserved by ${\rm O}_n ^{-} (q)$; its matrix 
with respect to $\mathcal{B}$ is: 
\begin{center}
$\left( \begin{array}{cccc} 
 0 & I_{l-1} & 0 & 0 \\
I_{l-1} & 0 & 0 & 0 \\
0 & 0 & 0 & \beta \\
0 & 0 & \beta & 0 \end{array} \right)
 \in \mathbb{M}_{n} (\mathbb{F}_q)$ 
\end{center}
where 
$\beta = -(\alpha + \overline{\alpha}) \in \mathbb{F}_q$. 
In all cases, $\Omega_n ^-(q)$ is a subgroup 
of index $2$ in $\SO_n ^-(q)$, see for instance \cite{BrHoRD} Definition 1.6.13 
for a precise description of $\Omega_n ^-(q)$ as a subgroup 
of $\SO_n ^-(q)$. 

The group $\SO^{-}_2(q)$ is cyclic of order $q+1$; 
let $\zeta\in \SO^{-}_2(q)$ be a generator. 
Since $\hcf(q+1,q-1) \leq 2$, $\zeta^m$ 
is not diagonalizable over $\mathbb{F}_q$ 
for any $1 \leq m \leq q$ 
with $m \neq (q+1)/2$. 

Now let $D(q) \leq \SO_{2l} ^- (q)$ 
be the subgroup of elements of the form: 
\begin{center}
$\left( \begin{array}{cc} 
 d(\underline{\lambda}) & 0 \\
 0 & \zeta^m \end{array} \right)$, 
where 
$d(\underline{\lambda}) = \diag(\lambda_1 , \ldots , \lambda_{l-1} , \lambda_1 ^{-1} , \ldots , \lambda_{l-1} ^{-1}) \in \Delta_{n-2} (q)$ 
\end{center}
for some $\underline{\lambda} = (\lambda_1 , \ldots , \lambda_{l-1}) \in (\mathbb{F}_q ^{\ast})^{l-1}$, 
and let $R(q) \subseteq D(q)$ be the set of 
elements such that 
$1 \neq \lambda_1 , \ldots , \lambda_{l-1} , \lambda_1 ^{-1} , \ldots , \lambda_{l-1} ^{-1}$ are all distinct and $1 \leq m \leq q$ with $m \neq (q+1)/2$. 
Applying Lemma \ref{SchwZipLem} 
to $X_1 ^{\pm 1} , \ldots , X_{l-1} ^{\pm 1}$, 
we have from (\ref{SchwZipEqn}) that 
$\lvert R(q) \rvert \geq 3 \lvert D(q) \rvert / 4$ 
for $q$ larger than an (explicit) absolute constant, 
so $\lvert R(q) \cap \Omega_n ^- (q) \rvert \geq  \lvert D(q) \cap \Omega_n ^- (q) \rvert / 2$. 
Let: 
\begin{center}
$r_1 = \left( \begin{array}{cc} 
 d(\underline{\lambda}) & 0 \\
 0 & \zeta^m \end{array} \right), 
 r_2 = \left( \begin{array}{cc} 
 d(\underline{\mu}) & 0 \\
 0 & \zeta^k \end{array} \right) \in R(q) \cap \Omega_n ^- (q)$
\end{center}
and suppose $r_1$ and $r_2$ are conjugate in 
$\Omega_n ^- (q)$. 
The eigenvalues of $\zeta$ in $\mathbb{F}_{q^2} ^{\ast}$ are $\omega^{\pm 1}$, for some 
$\omega \in \mathbb{F}_{q^2} ^{\ast}$ of order $q+1$. 
Since $k,m \neq (q+1)/2$, $\omega^{\pm m}$ 
and $\omega^{\pm k}$ do not lie in $\mathbb{F}_q$, 
hence $\lbrace \omega^{\pm m} \rbrace = \lbrace \omega^{\pm k} \rbrace$, 
so given $m$ there are only $2$ possibilities for $k$. 
The (common) eigenvalues of $r_1$ and $r_2$ 
in $\mathbb{F}_q$ 
are the (common) entries of $\underline{\lambda}$ 
and $\underline{\mu}$. Thus for fixed 
$\underline{\lambda}$, 
there are at most $2^{l-1} (l-1)!$ possibilities 
for $\underline{\mu}$. 
Thus 
$\lvert \ccl_{\Omega_n ^- (q)} (r_1) \cap R(q) \rvert \leq 2^l (l-1)!$. 

Now suppose $r \in R(q) \cap \Omega_n ^- (q)$ and let 
$g \in C_{\SO^{-} _n (q)}(r)$. 
Then $g$ preserves each eigenspace $\langle b_i \rangle$ 
for $1 \leq i \leq n-2$, 
hence preserves the orthogonal complement 
of $W = \langle b_1 , \ldots , b_{n-2} \rangle$ under $B^{-}$, namely $U = \langle b_{n-1},b_n \rangle$. 
The fact that $g$ preserves $B^-$ 
(and preserves $f^- \mid_{U}$ 
in the case of $q$ even) implies 
that $g$ has the form: 
\begin{center}
$\left( \begin{array}{cc} 
 d(\underline{\mu}) & 0 \\
 0 & h \end{array} \right)$ 
 for some $\underline{\mu} \in (\mathbb{F}_q ^{\ast})^{l-1}$ and $h \in \SO_2 ^{-} (q)$
\end{center}
so that $\lvert C_{\Omega^{-} _n(q)}(r) \rvert \leq 
\lvert C_{\SO^{-} _n (q)}(r) \rvert \leq 2 \lvert D(q) \rvert \leq 8 \lvert R(q) \cap \Omega_n ^- (q) \rvert$. 
We have satisfied the conditions of Remark \ref{twistedorderrmrk}. 

\subsubsection{Suzuki groups}

For the groups ${^2}B_2(q)$ we use the 
$4$-dimensional linear representation over $\mathbb{F}_q$ 
described in \cite{Wilson} \textsection 4.2. 
Here $q = 2^{2n+1}$. 
The form of the elements of ${^2}B_2(q) \cap \Delta_4 (q)$ 
is described in (4.10) from \cite{Wilson} p.115; 
they are: 
\begin{center}
$\lbrace \diag (\alpha,\alpha^{2^{n+1} - 1},\alpha^{-2^{n+1} + 1},\alpha^{-1}) : \alpha \in \mathbb{F}_q ^{\ast} \rbrace$
\end{center}
(so that $\lvert {^2}B_2(q) \cap \Delta_4 (q) \rvert = q-1$). 
Applying Lemma \ref{SchwZipLem} with 
$l=1$; $m=4$; $S = \mathbb{F}_q ^{\ast}$; $D = 4 \sqrt{q}$, and: 
\begin{center}
$f^{(1)}(X_1)=X_1, f^{(2)}(X_1)=X_1 ^{2^{n+1} - 1}, 
f^{(3)}(X_1) = X_1 ^{-2^{n+1} + 1}, f^{(4)}(X_1) = X_1 ^{-1}$, 
\end{center}
we conclude that there exists $R (q) \subseteq {^2}B_2(q) \cap \Delta_4 (q)$ 
consisting of elements without repeated eigenvalues 
and $\lvert R (q) \rvert \geq q/2 \geq \lvert {^2}B_2(q)\cap\Delta_4 (q)\rvert / 2$. 
As before, any element of $R(q)$ 
has order dividing $q-1$. 
Given $r \in R(q)$, 
$\lvert \ccl_{{^2}B_2(q)}(r) \cap R(q) \rvert 
\leq \lvert \ccl_{{^2}B_2(q)}(r) \cap \Delta_4(q) \rvert\leq 24$ , since any two conjugate elements have the same eigenvalues, 
and by Lemma \ref{regcentlem} 
$C_{{^2}B_2(q)}(r) \leq {^2}B_2(q)\cap\Delta_4 (q)$. 
Thus $R(q)$ satisfies Remark \ref{twistedorderrmrk}. 

\subsubsection{Small Ree groups}
The argument for ${^2} G_2 (q)$ is essentially identical 
(with $q = 3^{2n+1}$, using the $7$-dimensional linear representation 
explained in \cite{Wilson} \textsection 4.5; 
see in particular the description of the diagonal matrices 
lying in ${^2} G_2 (q)$, given in (4.53) from \cite{Wilson} p.137). 

\subsubsection{Large Ree groups}

For ${^2} F_4 (q)$ we use the model described in \cite[Section 4.9]{Wilson}. 
First consider the group $F_4 (q)$. 
As described in \cite[Section 4.8]{Wilson} this group 
has a faithful $27$-dimensional (reducible) linear representation 
on a nonassociative $\mathbb{F}_q$-algebra $\mathcal{A}$. 
There is a preferred $\mathbb{F}_q$-basis 
$\mathcal{W} = \lbrace w_i,w_i ^{\prime},w_i ^{\prime\prime}\rbrace_{i=0} ^8$
for $\mathcal{A}$ defined in \cite[Subsection 4.8.4]{Wilson}. 
Let $\circ$ denote the product on $\mathcal{A}$. 
Then $\circ$ is commutative and satisfies the following 
relations (among others): 
\begin{equation} \label{Albertprodeqn}
\begin{array}{ccc}
w_0 \circ w_1 = 0; & 
w_0 \circ w_1^{\prime} = w_1^{\prime}; & 
w_0 \circ w_1^{\prime\prime} = w_1^{\prime\prime}; \\
w_0 ^{\prime} \circ w_1 = w_1; &
w_0 ^{\prime} \circ w_1 ^{\prime} = 0; &
w_0 ^{\prime} \circ w_1^{\prime\prime} = w_1^{\prime\prime};  \\
w_0 ^{\prime\prime} \circ w_1 = w_1;  &
w_0 ^{\prime\prime} \circ w_1 ^{\prime} = w_1 ^{\prime\prime}; &
w_0 ^{\prime\prime} \circ w_1 ^{\prime\prime} = 0; \\
w_1 \circ w_8 = w_0 ^{\prime} + w_0 ^{\prime\prime}; & 
w_1 ^{\prime} \circ w_8 ^{\prime} = w_0 + w_0 ^{\prime\prime}; & 
w_1 ^{\prime\prime} \circ w_8 ^{\prime\prime} = w_0 + w_0 ^{\prime}.  \\
\end{array}
\end{equation}
Moreover $\circ$ is preserved by the action of $F_4 (q)$. 

Let $q = 2^{2n+1}$; identify $F_4 (q)$ with its image in $\GL (\mathcal{A})$ 
and embed $^2 F_4 (q)$ as a subgroup of $F_4 (q)$ as in 
\cite[Subsection 4.9.1]{Wilson}. 
The elements of $^2 F_4 (q)$ which are diagonal with 
respect to the basis $\mathcal{W}$ may be parametrised 
as $\lbrace g_{(\alpha , \beta)} \rbrace_{\alpha , \beta 
\in \mathbb{F}_q ^{\ast}}$, where 
the eigenvalues of $g_{(\alpha , \beta)}$ 
on the basis $\mathcal{W}$
are as given in \cite[Subsection 4.9.2]{Wilson}. 
The vectors $w_0, w_0 ^{\prime} , w_0 ^{\prime\prime}$ 
(called $w_9, w_9 ^{\prime} , w_9 ^{\prime\prime}$ 
in \cite[Subsection 4.9.2]{Wilson}, 
in contrast to the notation in \cite[Section 4.8]{Wilson}) 
are $1$-eigenvectors of $g_{(\alpha , \beta)}$, 
and there are Laurent polynomials 
$f^{(1)} , \ldots , f^{(24)} 
\in \mathbb{F}_q \lbrack X_1 ^{\pm 1},X_2 ^{\pm 1} \rbrack$, 
such that the remaining elements of $\mathcal{W}$ 
are eigenvectors of $g_{(\alpha , \beta)}$ with eigenvalues 
$f^{(1)}(\alpha , \beta) , \ldots , f^{(24)}(\alpha , \beta)$ 
(see the table in \cite[Subsection 4.9.2]{Wilson}). 
Moreover the family $1 , f^{(1)} , \ldots , f^{(24)}$ 
satisfies Lemma \ref{SchwZipLem} 
(with $l=2$, $m=24$, $S = \mathbb{F}_q ^{\ast}$ and $D = 4 \sqrt{q}$). 
By Lemma \ref{SchwZipLem} and Remark \ref{SchwZipRmrk}, 
there exists a set $R(q) \subseteq {^2} F_4 (q)$, 
with $\lvert R(q) \rvert \geq q^2 / 2$ 
for $q$ larger than an explicit absolute constant, 
such that $R(q)$ consists of diagonal matrices 
(with respect to the basis $\mathcal{W}$) 
of the form: 
\begin{center}
$g = \left( \begin{array}{cc} I_3 & 0 \\ 0 & h \end{array} \right)$
\end{center} 
for some $h \in \Delta_{24} (q)$ with no $1$-eigenvectors no repeated eigenvalues. 
The set $R(q)$ satisfies conditions (a) 
and (b)(i) of Remark \ref{twistedorderrmrk} 
as in the previous cases. 
Moreover by Lemma \ref{regcentlem} 
any $c \in C_{({^2}F_4 (q))} (g)$ has the form: 
\begin{equation*}
c = \left( \begin{array}{cc} k & 0 \\ 0 & d \end{array} \right) 
\end{equation*}
for some $k \in \GL_3 (q)$ and $d \in \Delta_{24} (q)$. 
The fibres of the projection of $C_{({^2}F_4 (q))} (g)$ 
onto the top-left $3 \times 3$ block have 
order bounded by $\lvert  \rvert = O(q^2)$, 
so to verify condition 
(b)(ii) of Remark \ref{twistedorderrmrk}, 
it suffices to bound the image of this projection map, 
that is, to bound the number of possibilities for $k$. 

Using the fact that the action of $^2 F_4 (q)$ 
preserves``$\circ$'', the first three rows of 
relations from (\ref{Albertprodeqn}) yield that: 
 \begin{equation} \label{Albertprodeqn}
\begin{array}{ccc}
 k_{1,1}+k_{2,1}=1;  & k_{2,1}+k_{3,1}=0;  & k_{3,1}+k_{1,1}=1; \\
 k_{1,2}+k_{2,2}=1;  & k_{2,2}+k_{3,2}=1;  & k_{3,2}+k_{1,2}=0; \\
k_{1,3}+k_{2,3}=0; & k_{2,3}+k_{3,3}=1; & k_{3,3}+k_{1,3}=1
\end{array}
\end{equation}
so that $k$ has the form: 
\begin{center}
$k = I_3 + \left( \begin{array}{ccc} x & y & z \\ x & y & z \\ x & y & z \end{array} \right)$. 
\end{center} 
However the final row of 
relations from (\ref{Albertprodeqn}) then shows 
that $x+y=y+z=z+x=0$, so $x=y=z$. 
Finally, since the eigenvalues of $d$ occur 
in inverse pairs (see \cite[Subsection 4.9.2]{Wilson}), 
we have $\det(k)=1$, so there are only three possibilities for $x$, and hence for $k$. 

\subsubsection{Type ${^2}E_6$}

The group $E_6 (q)$ admits a central extension $\SE_6 (q)$ of degree dividing $3$. 
As discussed in \cite[Section 4.10]{Wilson}, 
$\SE_6 (q)$ admits a faithful $27$-dimensional linear representation 
over $\mathbb{F}_q$. 
A preferred basis $\mathcal{W} = \lbrace w_0,\ldots,w_8 ^{\prime\prime} \rbrace$ 
is given, and we can define a Hermitian form, 
with respect to which $\mathcal{W}$ is orthonormal. 
The group ${^2}E_6 (q)$ 
is the image in  $E_6 (q^2)$ of the subgroup 
$H(q)$ of $\SE_6 (q^2)$ consisting of those elements 
preserving this Hermitian form 
(see \cite[Section 4.11]{Wilson}). 
By Lemma \ref{orderdivquotlem} it suffices to show that 
$\lvert E_{H(q)} (q+1)\rvert = \Omega (\lvert H(q) \rvert)$. 

There is described in \cite[Subsection 4.10.3]{Wilson} 
a subgroup $T=T(q)$ of $\SE_6 (q)$, 
which is the intersection of $\SE_6 (q)$ with a maximal torus of the simple linear algebraic group $\SE_6$ 
(defined over $K = \overline{\mathbb{F}_q}$). 
The elements of $T(q)$ are diagonal with 
respect to the basis $\mathcal{W}$, 
and are parametrized by six elements 
$\alpha , \beta , \gamma , \delta , \lambda , \mu \in \mathbb{F}_q ^{\ast}$. 
Arguing as in the case of unitary groups, 
an element of $\SE_6 (q^2)$ lying in $T(q^2)$ 
is in $H(q)$ iff it is supported on the set $S$ 
of $(q+1)$th roots of unity. 
We apply Lemma \ref{SchwZipLem} to this set $S$ 
(with $l=6$; $m=27$ and $D=2$) 
and the $27$ Laurent polynomials described in the table 
in \cite[Subsection 4.10.3]{Wilson}. 
We conclude that there exists a 
set $R(q) \subseteq T(q^2) \cap H(q)$, 
such that $\lvert R(q) \rvert \geq q^6 / 2$ 
for $q$ larger than an explicit absolute constant, 
and $R(q)$ consists of elements of 
$T(q^2) \cap H(q)$, all of whose eigenvalues 
are distinct from each other and distinct from $1$. 
Conditions (a) and (b)(i) 
of Remark \ref{twistedorderrmrk} 
are then clear, as in the previous cases. 
The centralizer $C_{H(q)} (g)$ of an element 
$g \in R(q)$ is diagonal by Lemma \ref{regcentlem}. 
It is not self-evident that $C_{H(q)} (g)$ 
is contained in $T(q^2) \cap H(q)$, 
however modulo scalars $C_{H(q)} (g)$ 
lies in the normalizer $N$ of $T(q^2)$ in $E_6 (q^2)$. 
The quotient map $C_{H(q)} (g) \rightarrow N/T(q^2)$ 
has fibres of size bounded by 
$\lvert T(q^2) \cap H(q) \rvert (q+1)^6$, 
and image a subgroup of $W(E_6) \cong \SO_5 (3)$ 
(see \cite[Subsection 4.10.3]{Wilson} again), 
and we have condition (b)(ii) 
of Remark \ref{twistedorderrmrk}. 

%A maximal torus in the simple linear algebraic group $\SE_6$ 
%(over $K = \overline{\mathbb{F}_q}$) 
%is described in \cite[Subsection 4.10.3]{Wilson}. 
%Arguing as in the case of unitary groups, 
%an element of $\SE_6 (q^2)$ lying in this torus 
%is in $H(q)$ iff it is supported on the set $S$ 
%of $(q+1)$th roots of unity. 
%Applying Lemma \ref{SchwZipLem} to this set $S$ 
%(with $l=6$; $m=27$ and $D=2$), 
%we conclude that 

\subsubsection{Type ${^3}D_4$}

The conclusion of Proposition \ref{orderdivpropn} for the groups 
${^3}D_4 (q)$ is essentially contained in \cite{DeriMich}. 
Therein, $G$ is a simple, simply-connected group of Dynkin type $D_4$ 
over $K = \overline{\mathbb{F}_p}$; $q$ is a power of $p$; 
$T$ is a maximal torus of $G$ and $\sigma$ is an automorphism of $G$ 
such that $T$ is $\sigma$-stable and ${^3}D_4 (q) = G_{\sigma}$, 
the set of $\sigma$-fixed points of $G$. 

There is another $\sigma$-stable maximal torus $T^{\prime}$ of $G$, 
and $g \in G$ such that $(T^{\prime} _{\sigma})^g =  T_4 \subseteq T$, 
where $T_4 \cong C_{q^2 - q + 1 } ^2$ 
is as described in Table 1.1 from \cite{DeriMich}. 
The regular semisimple elements of $G$ lying in $T_4$ 
are described in Propositions 2.1-2.2 of \cite{DeriMich}: 
they are the elements of the form $s_{13}$ appearing in Table 2.1 
from that paper. 

By Table 4.4 from \cite{DeriMich} and the discussion preceding it, 
the number of conjugacy classes of elements of type $s_{13}$ 
in $G_{\sigma}$ (which is the same as the number of 
irreducible characters of type $\chi_{13}$) is 
$(q^4 - 2 q^3 - q^2 + 2 q)/24$. 
It therefore suffices to show that each such element $x$ 
has centralizer in $G_{\sigma}$ of order $O(q^4)$. 
This is so, because $x$ is regular: 
by Proposition \ref{RegTorusNormCoroll}, 
$C_{G_{\sigma}} (x) \leq N_{G_{\sigma}} (T^{\prime})$, so: 
\begin{align*}
\lvert C_{G_{\sigma}} (x) \rvert & \leq \lvert N_{G_{\sigma}} (T^{\prime}):T^{\prime} _{\sigma}\rvert\cdot\lvert T^{\prime} _{\sigma}\rvert \\ 
& \leq \lvert N_G(T^{\prime}):T^{\prime} \rvert \cdot \lvert T_4 \rvert \\
& \leq \lvert W \rvert (q^2-q+1)^2 \text{ (by Theorem \ref{WeylThm})}
\end{align*}
as required. 

This concludes the proof of Proposition \ref{orderdivpropn}.

\Addresses


\begin{thebibliography}{99}
\bibitem{Asch} M. Aschbacher. 
{\it On the maximal subgroups of the finite classical groups.} 
Invent. Math. 76 (1984), 469--514

\bibitem{BabSer} L. Babai, A. Seress. 
{\it On the diameter of permutation groups. }
Europ. J. Combinatorics 13 (1992), 231--243

\bibitem{Birk} G. Birkhoff. 
{\it On the structure of abstract algebras. }
Proc. Camb. Phil. Soc. 31 (1935) 433--454

\bibitem{BouRab} K. Bou-Rabee. 
{\it Quantifying residual finiteness. }
J. Algebra 323 (2010), 729--737

\bibitem{BouMcR} K. Bou-Rabee, D.B. McReynolds. 
{\it Asymptotic growth and least common multiples in groups. }
Bull. Lond. Math. Soc. 43 (2011) no. 6, 1059--1068

\bibitem{BradThom} H. Bradford, A. Thom. 
{\it Short laws for finite groups and residual finiteness growth.}
arXiv:1701.08121, to appear in Transactions of the AMS. 

\bibitem{BrHoRD} J. Bray, D. Holt, C. Roney-Dougal. 
{\it The Maximal Subgroups of the Low-Dimensional Finite Classical Groups. }
London Mathematical Society Lecture Note Series 407, 
Cambridge University Press (2013)

\bibitem{BGTSuzuki} E. Breuillard, B. Green, T. Tao. 
{\it Suzuki groups as expanders. }
Groups Geom. Dyn.5 (2011), no. 2, 281--299.

\bibitem{BGT} E. Breuillard, B. Green, T. Tao. 
{\it Approximate subgroups of linear groups. }
Geom. Funct. Anal. 21 (2011), 774--819

\bibitem{BGGT} E. Breuillard, B. Green, R. Guralnick, T. Tao. 
{\it Expansion in finite simple groups of Lie type.}
 J. Eur. Math. Soc. (JEMS) 17 (2015), no. 6, 1367--1434.

\bibitem{Carter} R.W. Carter. 
{\it Simple groups of Lie type. }
Pure and applied mathematics 28, Wiley-Interscience, London 1972.

\bibitem{DeriMich} D.I. Deriziotis, G.O. Michler. 
{\it Character table and blocks of finite simple triality groups ${^3}D_4 (q)$. }
Trans. Amer. Math. Soc. 303 (1987), no. 1, 39--70. 

\bibitem{DiaSal} P. Diaconis, L. Saloff-Coste. 
{\it Comparison techniques for random walk on finite groups.}
Ann. Probab. 21, Issue 4 (1993), 2131--2156

\bibitem{Donkin} S. Donkin. 
{\it A note on decomposition numbers for reductive algebraic groups.}
J. Algebra 80 (1983), 226--234

\bibitem{Elka} A. Elkasapy. 
{\it A new construction for the shortest non-trivial element in the lower central series. }
arXiv:1610.09725

\bibitem{ElkaThom} A. Elkasapy and A. Thom. 
{\it On the length of the shortest non-trivial element in the derived and the lower central series.}
J. Group Theory 18 (2015) no. 5, 793-804

\bibitem{Had} U. Hadad. 
{\it On the shortest identity in finite simple groups of Lie type. }
J. Group Theory 14 (2011) no. 1, 37--47

\bibitem{Jones} G.A. Jones. 
{\it Varieties and simple groups.}
J. Aust. Math. Soc. 17 (1974) no. 2, 163--173. 

\bibitem{KanLub} W. Kantor, A. Lubotzky. 
{\it The probability of generating a finite classical group. }
Geom. Dedicata 36 (1990), 67--87. 

\bibitem{KanSer} W. Kantor and A. Seress. 
{\it Large element orders and the characteristic of Lie-type simple groups.} 
J. Algebra 322 (2009), 802--832

\bibitem{KaLuNi} M. Kassabov, A. Lubotzky, N. Nikolov. 
{\it Finite simple groups as expanders. }
Proc. Natl. Acad. Sci. USA 103 (2006), no. 16, 6116--6119.

\bibitem{KasMat} M. Kassabov, F. Matucci. 
{\it Bounding the residual finiteness of free groups. }
Proc. Amer. Math. Soc. 139 (2011) no. 7, 2281--2286

\bibitem{Kleid3D4} P. Kleidman. 
{\it The maximal subgroups of the Steinberg triality groups $^3D_4(q)$ 
and of their automorphism groups. }
J. Algebra 115 (1988), 30--71 

\bibitem{KleidG2} P. Kleidman. 
{\it The maximal subgroups of the Chevalley groups $G_2(q)$ with $q$ odd, 
the Ree groups $^2G_2(q)$, 
and their automorphism groups. }
J. Algebra 117 (1988), 182--199 

\bibitem{KleLie} P. Kleidman, M. Liebeck. 
{\it The subgroup structure of the finite classical groups.}
London Mathematical Society Lecture Note Series 129

\bibitem{KozTho} G. Kozma, A. Thom. 
{\it Divisibility and laws in finite simple groups. }
Mathematische Annalen 361, Issue 1 (2016), 79--95

\bibitem{LanSei} V. Landazuri, G. Seitz. 
{\it On the minimal degrees of projective representations of the finite Chevalley groups. }
J. Algebra 32 (1974), 418--443

\bibitem{Liebeck} M. Liebeck. 
{\it On the orders of maximal subgroups 
of the finite classical groups. }
Proc. London Math. Soc. 50 (1985) no. 3 426--446

\bibitem{LieProb} M. Liebeck. 
{\it Probabilistic and asymptotic aspects of finite simple groups.} 
Probabilistic group theory, combinatorics, and computing. Springer London (2013) 1--34

\bibitem{LieSaxSei} M. Liebeck, J. Saxl, G. Seitz. 
{\it Subgroups of maximal rank in finite
exceptional groups of Lie type. }
Proc. London Math. Soc. 65 (1992), 297--325.

\bibitem{LieSeiSurv} M. Liebeck, G. Seitz. 
{\it A survey of maximal subgroups of exceptional groups of Lie type.}
Groups, Combinatorics and Geometry: Durham 2001, 
World Scientific Publishing (2003), 139--146

\bibitem{LieSha} M. Liebeck, A. Shalev. 
{\it The probability of generating a finite simple group.}
Geom. Dedicata 56 (1995), 103--113.

\bibitem{Lova} L. Lov\'{a}sz. 
{\it Random walks on graphs: a survey. }
Combinatorics, Paul Erd\"os
is Eighty 2 (1993), 1--46.

\bibitem{Malle} G. Malle. 
{\it The maximal subgroups of $^2 F_4 (q^2)$. }
J. Algebra 139 (1991), 52--69

\bibitem{MalTes} G. Malle, D. Testerman. 
{\it Linear Algebraic Groups and Finite Groups of Lie Type. }
Cambridge Studies in Advanced Mathematics 133, 
CUP (2011)

\bibitem{HNeum} H. Neumann. 
{\it Varieties of Groups.} 
Springer Berlin Heidelberg (1967)

\bibitem{PySz} L. Pyber, E. Szabo. 
{\it Growth in finite simple groups of Lie type. }
J. Amer. Math. Soc. 29, Issue 1 (2016) 95--146. 

\bibitem{Rosen} B. Rosenfeld, B. Wiebe. 
{\it Geometry of Lie Groups. }
Netherlands, Springer US (2013).

\bibitem{Thom} A. Thom. 
{\it About the length of laws for finite groups. }
 Israel J. Math. 219 (2017), no. 1, 469--478.

\bibitem{Wagner1} A. Wagner. 
{\it An observation on the degrees of projective representations 
of the symmetric and alternating group over an arbitrary field. }
Arch. Math. 29, Issue 1 (1977), 583--589

\bibitem{Wagner2} A. Wagner. 
{\it The faithful linear representation of least degree
of $S_n$ and $A_n$ over a field of characteristic $2$.}
Math. Z. 151 (1976), 127--137 

\bibitem{Wagner3} A. Wagner. 
{\it The faithful linear representation of least degree
of $S_n$ and $A_n$ over a field of odd characteristic.}
Math. Z. 154 (1977), 103--114 

\bibitem{Wilson} R. Wilson. 
{\it The Finite Simple Groups. }
Graduate Texts in Mathematics 251, Springer London (2012)

\bibitem{Zyrus} C. Zyrus. 
{\it Almost laws for finite simple groups.}
PhD Thesis, in preparation. 

\end{thebibliography}
\end{document}